\theoremstyle{definition}
\newtheorem{definition}{Definition}[section]
\theoremstyle{plain}
\newtheorem{theorem}[definition]{Theorem}
\newtheorem{proposition}[definition]{Proposition}
\newtheorem{lemma}[definition]{Lemma}
\newtheorem{keylemma}[definition]{Key Lemma}
\newtheorem{corollary}[definition]{Corollary}
\newtheorem{example}[definition]{Example}
\theoremstyle{remark}
\newtheorem{remark}[definition]{Remark}
\numberwithin{equation}{section}
\newcommand{\mf}{\mathfrak}
\newcommand{\C}{\mathbb C}
\newcommand{\oa}{{\bar 0}}
\newcommand{\ob}{{\bar 1}}
\newcommand{\vare}{\epsilon} 
\def\gl{\mathfrak{gl}}
\newcommand{\g}{\mathfrak{g}}
\newcommand{\h}{\mathfrak{h}}
\newcommand{\Z}{{\mathbb Z}}
\newcommand{\RNum}[1]{\uppercase\expandafter{\romannumeral #1\relax}}
\begin{document}

\title{The Schur-Weyl duality and Invariants for classical Lie superalgebras}
\author{Yang Luo ${}^1$ and Yongjie Wang ${}^{2}$}
\maketitle

\begin{center}
\footnotesize
\begin{itemize}
\item[1] College of Science, National University of Defense Technology, Changsha, Hunan, 410073, China.
\item[2] School of Mathematics, Hefei University of Technology, Hefei, Anhui, 230009, China.
\end{itemize}
\end{center}
\begin{abstract}
In this article, we provide a comprehensive characterization of invariants of classical Lie superalgebras from the super-analog of the Schur-Weyl duality in a unified way. We establish $\mathfrak{g}$-invariants of the tensor algebra $T(\mathfrak{g})$, the supersymmetric algebra $S(\mathfrak{g})$, and the universal enveloping algebra $\mathrm{U}(\mathfrak{g})$ of a classical Lie superalgebra $\mathfrak{g}$ corresponding to every element in centralizer algebras and their relationship under supersymmetrization. As a byproduct, we prove that the restriction on $T(\mathfrak{g})^{\mathfrak{g}}$ of the projection from $T(\mathfrak{g})$ to $\mathrm{U}(\mathfrak{g})$ is surjective, which enables us to determine the generators of the center $\mathcal{Z}(\mathfrak{g})$ except for $\mathfrak{g}=\mathfrak{osp}_{2m|2n}$. Additionally, we present an alternative algebraic proof of the triviality of $\mathcal{Z}(\mathfrak{p}_n)$. The key ingredient involves a technique lemma related to the symmetric group and Brauer diagrams. 
\bigskip

\noindent\textit{MSC(2020):} 17B35, 17B10, 20C30.
\bigskip

\noindent\textit{Keywords:}  Classical Lie superalgebras; Brauer diagram; Casimir elements; Gelfand invariants; Schur-Weyl duality.
\end{abstract}

\tableofcontents

\section{Introduction}
\label{sec:intr}

Schur-Weyl duality is one of the most significant and influential topics in representation theory. It not only reveals numerous hidden connections between the representation theories of seemingly unrelated algebras, but also serves as a powerful tool for studying the structure and representation theory of the corresponding Lie (super)algebras. For example, the symmetric group algebra  $\mathbb{C}\mathfrak{S}_d$ appears as the centralizer of the general linear Lie (super)algebra action on the tensor space $V^{\otimes d}$, where $V$ is the natural representation of $\mathfrak{gl}_n$ or $\mathfrak{gl}_{m|n}$.  Since then, there have been various attempts to depict the centralizer algebra of natural representations of other Lie (super)algebras. The Brauer algebras with specific parameters can be realized as the centralizer algebra of the orthogonal, the symplectic Lie algebra, or the  ortho-symplectic Lie superalgebra acting on the tensor product of the corresponding natural representation. Similarly,  the Hecke-Clifford algebra and the Periplectic Brauer algebra are the centralizer algebras of the queer Lie superalgebra and the periplectic Lie superalgebra, respectively. We refer to \cite{CW12, Co18, ES16, LZ12, LZ15, M18, Mo03, Mu12, Se84}  and references therein for more results on the Schur-Weyl dualities. 

There is a significant correlation between the Schur-Weyl duality and the center of the universal enveloping algebra, which is primarily manifested in two aspects. Firstly, the duality implies that the centers $\mathcal{Z}(\mathfrak{gl}_n)$  and $\mathcal{Z}(\mathbb{C}\mathfrak{S}_d)$ of $\mathrm{U}(\mathfrak{gl}_n)$ and $\mathbb{C}\mathfrak{S}_d$, respectively, have the same images in the automorphisms of the corresponding tensor spaces. Kerov and Olshanski explicitly described elements of $\mathcal{Z}(\mathfrak{gl}_n)$ that act identically on the tensor space as a given element of $\mathcal{Z}(\mathbb{C}\mathfrak{S}_d)$, see \cite{KO94} for more details.  
This result has recently been generalized to Lie superalgebras $\mathrm{U}(\mathfrak{gl}_{m|n})$ and $\mathrm{U}(\mathfrak{q}_n)$ by Borodin and Rozhkovskaya in \cite{BR22}. 

Secondly, the duality can be utilized to obtain natural constructions of families of Casimir elements for the classical Lie algebras.  In the case of the general linear Lie algebra, these elements are known as quantum immanants, as mentioned in references \cite{O96, OO98}, and they form a linear basis of the center of the universal enveloping algebra $\mathrm{U}(\mathfrak{gl}_n)$. Iorgov, Molev, and Ragoucy employed the Schur-Weyl duality to create Casimir elements for classical Lie algebras and demonstrated that these elements are algebraically independent generators of the center by computing the image of the Harish-Chandra homomorphism in their research \cite{IMR13}. These generators can also be expressed as certain non-commutative determinants and permanents.

Furthermore,  Molev and Retakh utilized the techniques of quasideterminants in their publication \cite{MR04} to derive families of Casimir elements for the general Lie superalgebra $\mathfrak{gl}_{m|n}$. The images of these elements under the Harish-Chandra homomorphism are elementary, complete, and power sums supersymmetric functions.
 Consequently, they obtained the several families of generators for the center $\mathcal{Z}(\mathfrak{gl}_{m|n})$. Recently, Lehrer and Zhang have system investigated on the invariant theory of Lie (super)algebras \cite{LZ12, LZ15, LZ17a, LZ17b, LZ21, LZ24}, also in collaboration with Deligne \cite{DLZ18}. Specifically, they have obtained the super Paffian of the ortho-symplectic Lie superalgebras, which represents a significant advancement beyond our current techniques.

In this paper, our purpose is to establish explicit invariants of classical Lie superalgebras $\mathfrak{g}$ arising from the super-analog of the Schur-Weyl duality and investigate the explicit relationships between the $\mathfrak{g}$-invariant subalgebras $T(\mathfrak{g})^{\mathfrak{g}}$, $S(\mathfrak{g})^{\mathfrak{g}}$, and $\mathrm{U}(\mathfrak{g})^{\mathfrak{g}}$. In the following diagram, the map $\eta$ is surjective and $\psi$ is bijective. However, it is unknown whether $\eta'$ is surjective or not due to the lack of commutativity.

\begin{align*}
 \xymatrix{
T(\mathfrak{g})^{\mathfrak{g}}\ar[r]^{\eta}\ar[dr]_{\eta'}&S(\mathfrak{g})^{\mathfrak{g}}\ar@{->}[d]^{\psi} \\
&\mathrm{U}(\mathfrak{g})^{\mathfrak{g}}
.}
\end{align*}

We provide an affirmative answer to this question for Lie superalgebras $\mathfrak{gl}_{m|n}$, $\mathfrak{osp}_{2m+1|2n}$, $\mathfrak{p}_n$, and $\mathfrak{q}_n$ by utilizing a new method to obtain central elements of the universal enveloping algebra from the super version of Schur-Weyl duality. To be more explicitly, let $(V,\iota)$ be the natural representation of a classical Lie superalgebra $\mathfrak{g}$, then there is a split surjective homomorphism $\tilde{\pi}$ from $\mathrm{End}(V)$ to $\mathfrak{g}$, see Proposition  \ref{le:split}, which induces a surjective linear map $\pi$ from $\mathop{\bigoplus}\limits_{k\geqslant 0}\left[\mathrm{End}(V)^{\otimes k}\right]^{\mathfrak{g}}$ to $T(\mathfrak{g})^{\mathfrak{g}}$. Note that $\mathop{\bigoplus}\limits_{k\geqslant 0}\left[\mathrm{End}(V)^{\otimes k}\right]^{\mathfrak{g}}$ is isomorphic to $\mathop{\bigoplus}\limits_{k\geqslant 0}\mathrm{End}_{\mathfrak{g}}(V^{\otimes k})$ in a canonical manner, and the latter is isomorphic to  the direct sum of the centralizer algebra through the corresponding Schur-Weyl duality. 
In summary, we have the following diagram:  
 $$
\xymatrix{\bigoplus\limits_{k\geqslant0}\mathcal{A}_k\ar@{>}[r]^(.37){\Psi} &\bigoplus\limits_{k\geqslant0}\mathrm{End}_{\mathfrak{g}}(V^{\otimes k})\ar@{->>}[r]^(.46){\Omega}&\bigoplus\limits_{k\geqslant0}\left[\mathrm{End}(V)^{\otimes k}\right]^{\mathfrak{g}}\ar@{-->>}[r]^(.62){{\pi}}&
T(\mathfrak{g})^{\mathfrak{g}}\ar@{->>}[r]^{\eta}\ar[dr]^{\eta'}&S(\mathfrak{g})^{\mathfrak{g}}\ar@{->>}[d]^{\psi}\\
&&&&\mathcal{Z}(\mathfrak{g}),
}
$$
where  $\Psi=\bigoplus\limits_{k\geqslant0}\Psi_k$, $\pi=\bigoplus\limits_{k\geqslant0}\tilde{\pi}^{\otimes k}$, and  $\Omega$ is the direct sum of the canonical map on each summand.

The split surjective homomorphism $\tilde{\pi}$ builds a bridge, which enables us to convert all elements of centralizer algebras into $\mathfrak{g}$-invariants of $T(\mathfrak{g})$, $S(\mathfrak{g})$ and $\mathrm{U}(\mathfrak{g})$. By comparing the $\mathfrak{g}$-invariants of $S(\mathfrak{g})$ under the supersymmetrization map $\psi$ with the $\mathfrak{g}$-invariants of $\mathrm{U}(\mathfrak{g})$, we prove that $\eta'$ is surjective if $\mathfrak{g}\neq \mathfrak{osp}_{2m|2n}$, see Theorem \ref{eta'surjective}.

We present the generators, which are constructed by certain $k$-cycles of the centralizer algebras, of the center $\mathcal{Z}(\mathfrak{g})$ of the universal enveloping algebra $\mathrm{U}(\mathfrak{g})$ for the general linear Lie superalgebra $\mathfrak{gl}_{m|n}$, ortho-symplectic Lie superalgebra $\mathfrak{osp}_{2m+1|2n}$, and queer Lie superalgebra $\mathfrak{q}_n$, as well as partial generators for the ortho-symplectic Lie superalgebra $\mathfrak{osp}_{2m|2n}$ case by calculating the image of invariants under the Harish-Chandra homomorphisms. These results are consistent with the relevant research conducted in \cite{IMR13, Jar79, Jar83, M18,  NS06, Sc83c, Se83}.

We observe that Scheunert obtained the Casimir elements of a Lie superalgebra through the investigation of invariant supersymmetric multilinear forms on its coadjoint module, see \cite{Sc87}. Specifically, by employing Lie supergroup and algebraic geometry techniques, he deduced that the center $\mathcal{Z}(\mathfrak{p}_n)$ is trivial in \cite{Sc87}. The center of the other $P$-type Lie superalgebras has been investigated by Georelik in \cite{Go01}. In this article, we present a purely algebraic proof of the triviality. To accomplish this, we simplify the problem to one,  see Key Lemma \ref{symmetricgroup}, which only involves the symmetric group. Fortunately, we discover that this issue can be solved by investigating the properties of the Brauer diagrams.

The paper is organized as follows: In Subsection \ref{Liesuperalgebra}, we set up the fundamental facts related to the Lie superalgebras $\mathfrak{gl}_{m|n}, \mathfrak{osp}_{m|2n}$, $\mathfrak{q}_n$ and $\mathfrak{p}_n$.  In Subsections \ref{sect:harish-chandra} and \ref{sect:schurweyl}, we provide background materials on the Harish-Chandra homomorphism and the Schur-Weyl duality of the classical Lie superalgebras,  along with other general technical results and preparatory tools that are to be used in the sequel. 
In Section \ref{se:Centers}, we focus on the investigation on the relationship between the tensor, the supersymmetric algebra and the universal enveloping algebra of a Lie superalgebra $\mathfrak{g}$, see Theorem \ref{etaeta'}, Proposition \ref{osprelation}. We also obtain the Gelfand invariants for classical Lie superalgebra, by extending earlier work by Molev \cite[Chapter 5]{M18}, see Theorems \ref{Gelfandgl} and \ref{Gelfandosp}. Also in this section, we deduce the 
generators of the center $\mathcal{Z}(\mathfrak{q}_n)$, and the triviality of the center $\mathcal{Z}(\mathfrak{p}_n)$, which coincide with Sergeev's \cite{Se83} and Scheunert's work\cite{Sc87}, respectively. 
\medskip

\noindent{\bf Notations and terminologies:}

Throughout this paper, we will use the standard notations $\mathbb{Z}$, $\mathbb{Z}_+$ and $\mathbb{N}$ that represent the sets of integers, non-negative integers and positive integers, respectively. Denote by $\mathcal{Z}(\mathfrak{g})$ the center of the universal enveloping (super)algebra of a Lie (super)algebra $\mathfrak{g}$. The Kronecker delta $\delta_{ij}$ is equal to $1$ if $i=j$ and $0$ otherwise.

We write $\mathbb{Z}_2=\left\{\bar{0},\bar{1}\right\}$. For a homogeneous element $x$ of an associative or Lie superalgebra, we use $|x|$ to denote the degree of $x$ with respect to the $\mathbb{Z}_2$-grading. Throughout the paper, when we write $|x|$ for an element $x$,  we will always assume that $x$ is a homogeneous element and automatically extend the relevant formulas by linearity (whenever applicable). All modules of Lie superalgebras and quantum superalgebras are assumed to be $\mathbb{Z}_2$-graded. The tensor product of two superalgebras $A$ and $B$ carries a superalgebra structure by 
$$(a_1 \otimes b_1)\cdot(a_2\otimes b_2) =(-1)^{|a_2||b_1|}a_1a_2\otimes b_1b_2,$$
for homogeneous elements $a_i\in A, b_i\in B$ with $i=1,2$.

Let $\mathbb{C}^{N}$ be a $N$-dimensional superspace with homogeneous basis $e_1,e_2,\cdots,e_N$. We briefly denote the degree of $v_k$ with $|k|$ for all $k$. Let $\mathrm{End}(\mathbb{C}^N)$ be the endomorphism algebra of superspace $\mathbb{C}^{N}$ and denote by $e_{ij}$ the endomorphism $e_{ij}e_k=\delta_{jk}e_i$ for all $k$. 

We call $X=[X_{ij}]$ with entries in an associative superalgebra $\mathcal{A}$ as $N\otimes N$ supermatrix, and it can be regarded as the element 
\begin{align}\label{tensornotation}
X=\sum\limits_{i,j=1}^N(-1)^{|i||j|+|j|}e_{ij}\otimes X_{ij}\in\mathrm{End}(\mathbb{C}^N)\otimes\mathcal{A}.
\end{align}
We will need tensor product superalgebras of the form 
$\mathrm{End}(\mathbb{C}^N)^{\otimes m}\otimes \mathcal{A}$. For any $1\leqslant a\leqslant m$, we will denote by $X_a$ the element associated withe $a$-th  copy of $\mathrm{End}(\mathbb{C}^N)$ so that 
$$X_a=\sum\limits_{i,j=1}^N(-1)^{|i||j|+|j|} 1^{\otimes (a-1)}\otimes  e_{ij}\otimes 1^{\otimes (m-1)}\otimes X_{ij}\in\mathrm{End}(\mathbb{C}^N)^{\otimes m}\otimes \mathcal{A}.$$ 
The supertrace map $\mathrm{Str}\colon\mathrm{End}(\mathbb{C}^N)\longrightarrow\mathbb{C}$ is defined by $e_{ij}\mapsto (-1)^{|i|}\delta_{ij}.$ 
Furthermore, for any $a\in\{1,\ldots,m\}$ the partial supertrace $\mathrm{Str}_{a}$ will be defined as follows:
$$\mathrm{Str}_{a}:\ \mathrm{End}(\mathbb{C}^N)^{\otimes m}\longrightarrow\mathrm{End}(\mathbb{C}^N)^{\otimes (m-1)},$$
which takes the supertrace on the the $a$-th copy of $\mathrm{End}(\mathbb{C}^N)$ and acts the identity map on all the remaining copies. The full supertrace $\mathrm{Str}_{1,\cdots,m}$ is the composition $\mathrm{Str}_{1}\circ\cdots\circ\mathrm{Str}_{m}$.

{\bf  Acknowledgment}.  The first author is supported by  NSF of China (Grants No. 12401037) and NUDT (No. 202401-YJRC-XX-002). The second author is supported by NSF of China (Grants Nos. 12071026, 1240125) and Anhui Provincial Natural Science Foundation 2308085MA01. We are very grateful to Prof. S.-J. Cheng, Prof. A. Molev, Prof. R. B. Zhang and Dr. W. Liu for their helpful discussions and  comments.

\section{Preliminaries}\label{Liequantum}
\subsection{Lie superalgebras}\label{Liesuperalgebra}
In this section, we will review some essential facts regarding Lie superalgebras including root systems, matrix realizations, the enveloping superalgebras and the Harish-Chandra homomorhisms. This will help us establish our notations. For more comprehensive information on the theory of Lie superalgebras, we recommend referring to the books \cite{CW12} and \cite{Mu12}.
Recall that a finite-dimensional Lie superalgebra $\mathfrak{g}=\mathfrak{g}_{\bar{0}}\oplus\mathfrak{g}_{\bar{1}}$ over the complex field is referred to as \textit{classical}, or \textit{quasi-reductive}, if the restriction of the adjoint representation of $\mathfrak{g}$ to the even part $\mathfrak{g}_{\bar{0}}$ is semisimple. In the present paper, we are mainly interested in the following series of classical Lie superalgebras:
\begin{align}\label{eq:glist}
\g= \gl_{m|n},\ \mf{osp}_{m|2n},\ \mathfrak{q}_{n}  \text{ and }  \mathfrak{p}_{n}, \text{  for $m,n\in \Z_{\geq 0}$}.
\end{align}
\subsubsection{The general linear Lie superalgebra $\gl_{m|n}$}
The Lie superalgebra $\gl_{m|n}$ can be realized as
the space of $(m+n)$ by $(m+n)$ \mbox{complex matrices}  
\begin{align*}
	  \left( \begin{array}{cc} A & B\\
		C & D\\
	\end{array} \right),
\end{align*} where $A\in \C^{m\times m}, B\in \C^{m\times n}, C\in \C^{n\times m}, D\in \C^{n\times n}$, with the Lie bracket
given by the super commutator; see, \cite[Section 1.1.2]{CW12}, \cite[Chapter 2]{Mu12} or \cite{Ka77} for more details. 
 
 Denote by  $E_{ij}$   the elementary matrix in $\gl_{m|n}$ with $(i,j)$-entry $1$ and other entries $0$, for $1\leqslant i,j\leqslant m+n$.  The parity of $E_{ij}$ is $|i|+|j|$, where $|i|=
 \bar{0}$ if  $1\leqslant i\leqslant m$, or $\bar{1}$, otherwise.
 We fix the Cartan subalgebra $\h:=\bigoplus_{i=1}^{m+n}\C E_{ii}$ consisting of all diagonal matrices. Denote by $h_i=E_{ii}$ and $h'_j=E_{m+j,m+j}$ for all $1\leqslant i\leqslant m$ and $1\leqslant j\leqslant n$. Let 
$\vare_i$ and $\delta_j$ denote the standard dual basis elements for $\h^\ast$  determined by $\vare_j(h_{i}) =\delta_{ij}$,  $\delta_p(h'_{q}) = \delta_{pq}$, and $\vare_j(h'_{q}) = \delta_p(h_{i}) =0$, for any $1\leqslant i,j\leqslant m$ and $1\leqslant p,q\leqslant n.$
The corresponding sets of even and odd roots are given by
\begin{align*}
	&\Phi_\oa:= \{\vare_i -\vare_j,~\delta_{p}-\delta_q|~1\leqslant i\neq  j\leqslant m,~1\leqslant p\neq  q\leqslant n\},\\
	&\Phi_\ob:=\{\pm(\vare_i-\delta_p)|~1\leqslant i\leqslant m,~1\leqslant p\leqslant n\},
\end{align*} respectively.
We fix the positive system $\Phi^+=\Phi_\oa^+\cup\Phi_\ob^+$ as follows:  
\begin{align*}
		&\Phi_\oa^+:= \{\vare_i -\vare_j,~\delta_{p}-\delta_q|~1\leqslant i< j\leqslant m,~1\leqslant p< q\leqslant n\},\\
	 &\Phi_\ob^+:=\{\vare_i-\delta_j|~1\leqslant i\leqslant m,~1\leqslant j\leqslant n\}. 
\end{align*}
Then the weyl vector $\rho=\frac{1}{2}\sum\limits_{\alpha\in\Phi_\oa^+}\alpha-\frac{1}{2}\sum\limits_{\alpha\in\Phi_\ob^+}\alpha$. The set $\Phi^-$ of all negative roots is given by  $\Phi^-:= -\Phi^+$. 

We define the bilinear form  $\langle\cdot,\cdot\rangle:\h^\ast\times \h^\ast\rightarrow \C$  on $\h^\ast$ as the one induced by the standard super-trace form, that is, $\langle\vare_i,\vare_j\rangle =\delta_{ij}$ and $\langle\delta_p,\delta_q\rangle =-\delta_{pq}$ and $\langle\vare_i,\delta_p\rangle=0$, for all $1\leqslant i,j\leqslant m$ and $1\leqslant p,q\leqslant n$.

\subsubsection{The queer Lie superalgebra $\mathfrak{q}_{n}$} \label{queer} 
Let $V=V_{\bar{0}}\oplus V_{\bar{1}}$ be a superspace with $\mathrm{dim}V_{\bar{0}}=\mathrm{dim}V_{\bar{1}}=n$. Choose $\mathcal{P}\in\mathrm{End}_{\bar{1}}$ such that $\mathcal{P}^2=I_{V}$. The subspace
\begin{align*}
\mathfrak{q}_n=\{T\in \mathrm{End}(V)|[T,\mathcal{P}]=0\}
\end{align*}
is a subalgebra of $\mathfrak{gl}(V)$ called the \textit{queer Lie superalgebra}. There exists a basis $B=B_0\cup B_1$ such that $B_0=\{e_1,\cdots,e_n\}$ is a basis for $V_{\bar{0}}$ and $B_1=\{e_{-1},\cdots,e_{-n}\}$ is a basis for $V_{\bar{1}}$, and
\begin{align}\label{eq:cliffact}
\mathcal{P}e_i=-\sqrt{-1}e_{-i},\quad \mathcal{P}e_{-i}=\sqrt{-1}e_i ,
\end{align}
for all $i=1,2,\cdots,n$. Then $\mathfrak{q}_{n}$ is spanned by the elements $H_{ij}=e_{ij}+e_{-i,-j}$ with the parity $|i|+|j|$.

 The Lie superalgebra $\mathfrak{q}_{n}$ can be realized as a subalgebra inside $\gl_{n|n}$, and it is   
 \begin{align}
 	\mathfrak{q}_{n}=
 	\left\{\left. \left( \begin{array}{cc} A & B\\
 		B & A\\
 	\end{array} \right) \right| ~ A,B\in \mathfrak{gl}_{n}\right\}.  \label{eq::max::qn}
 \end{align}  
 Fix the Cartan subalgebra  $\h= \bigoplus_{i=1}^n (\C h_{i}+\mathbb{C} h_i')$ consisting of diagonal matrices, where $h_i=e_{ii}+e_{n+i,n+i}$ and $h'_i=e_{i,n+i}+e_{n+i,i}$ for all $1\leqslant i\leqslant n$. Let $\{\vare_1, \vare_2, \ldots, \vare_n\}$ denote the basis for $\h^\ast_{\bar{0}}$ dual to the standard basis $h_{1},h_{2},\ldots, h_{n}$. The sets of even and odd roots are given by 
 \begin{align*}
 	&\Phi_\oa =\Phi_\ob = \{\vare_i-\vare_j|~1\leqslant i\neq  j \leqslant n\}, \end{align*}
the set $\Phi^+$ of positive roots and the set $\Phi^-$ of negative roots are listed as follows:
\begin{align*}
&\Phi^+:=\{\vare_i-\vare_j|~1\leqslant i< j \leqslant n\},\quad
\Phi^-:=\{-\vare_i+\vare_j|~1\leqslant i< j \leqslant n\}.
\end{align*} 

Then the weyl vector $\rho=\frac{1}{2}\sum\limits_{\alpha\in\Phi_\oa^+}\alpha-\frac{1}{2}\sum\limits_{\alpha\in\Phi_\ob^+}\alpha=0$.

Let $\mathfrak{q}_n^{\perp}$ be the set of all block matrices of the form 
\begin{align}
\left( \begin{array}{cc} A & B\\
 		-B & -A\\
 	\end{array} \right),\end{align}
where $A, B$ are $n\times n$ matrices. It is a complementary subspace of $\mathfrak{q}_{n}$ in $\mathrm{End}(V)$.
 
Moreover, $\mathrm{End}(V)=\mathfrak{q}_{n}\oplus \mathfrak{q}_{n}^{\perp}$ as adjoint $\mathfrak{q}_{n}$-modules.  Denote by $\tilde{\pi}$ the natural projection associated with the direct sum decomposition. Note that $\mathfrak{q}_{n}$ is closed under matrix multiplication, and therefore $\tilde{\pi}$ is a module algebra homomorphism from $\mathrm{End}(V)$ to $\mathfrak{q}_{n}$.

Furthermore, the endomorphism algebra $\mathrm{End}(V)$ is equal to the direct sum of $\mathfrak{q}_{n}$ and its orthogonal complement $\mathfrak{q}_{n}^{\perp}$ as adjoint $\mathfrak{q}_{n}$-modules. Let $\tilde{\pi}$ denote the natural projection associated with this direct sum decomposition. It is important to note that $\mathfrak{q}_{n}$ is closed under matrix multiplication, thus $\tilde{\pi}$ is a module algebra homomorphism from $\mathrm{End}(V)$ to $\mathfrak{q}_{n}$.

\subsubsection{The ortho-symplectic Lie superalgebra $\mf{osp}_{m|2n}$}\label{orth}  Let $V=V_{\bar{0}}\oplus V_{\bar{1}}$ be a superspace with $\mathrm{dim}V_{\bar{0}}=m$ and $\mathrm{dim}V_{\bar{1}}=2n$. Suppose $B(\cdot,\cdot)$ is a non-degenerate supersymmetric even bilinear form on $V$. This means that, the restriction of $B$ to $V_{\bar{0}}$ is symmetric, the restriction of $B$ to $V_{\bar{1}}$ is skew-symmetric, and $V_{\bar{0}}$ and $V_{\bar{1}}$ are orthogonal under $B$. The ortho-symplectic Lie superalgebra $\mathfrak{osp}_{m|2n}$ is the Lie sub-superalgebra of $\mathrm{End}(V)$ that preserves the form $B$. More precisely, 
$\mathfrak{osp}_{m|2n}=(\mathfrak{osp}_{m|2n})_{\bar{0}}\oplus (\mathfrak{osp}_{m|2n})_{\bar{1}}, \text{ where }$
$$
(\mathfrak{osp}_{m|2n})_s=\left\{a\in \mathrm{End}(V)\left| B(ax,y)=-(-1)^{s\cdot|x|}B(x,ay)\right.~~\forall x,y\in V\right\},\quad s\in\mathbb{Z}_2.
$$

Choose an appropriate basis $\{e_i\}_{1\leqslant i\leqslant m+n}$ such that $e_i\in V_{\bar{0}}$ if and only if $n<i\leqslant m+n$ and the otherwise are odd, then the Gram matrix $T$ of form $B$ can be written as
\begin{equation*}
T=\begin{array}{c@{\hspace{-5pt}}l}
\left(
\begin{array}{ccc|ccc|ccc}
& &  &  &   & &&&1\\
& &  &  &   & && \iddots &\\
& &  &  &   & &1&&\\ \hline   
& &  &  &   & 1&&&\\
& &  &  &  \iddots  & &&&\\
& &  &  1&   & &&&\\ \hline

& & -1 &  &   & &&&\\ 
& \iddots&  &  &   & &&&\\ 
-1& &  &  &   & &&&\\    
\end{array}     
\right)    
&
\begin{array}{l}
\left.\rule{0mm}{7.4mm}\right\}n\\
\\
\left.\rule{0mm}{6.5mm}\right\}m\\
\\
\left.\rule{0mm}{6.8mm}\right\}n\\
\end{array}
\\[-5pt]
\begin{array}{ccc}
\underbrace{\rule{22mm}{0mm}}_n
\underbrace{\rule{17.6mm}{0mm}}_m
\underbrace{\rule{17mm}{0mm}}_n
\end{array}
\end{array}.
\end{equation*}
For each $1\leqslant i\leqslant m+2n$, define $i'=m+2n+1-i$. Set
\begin{align*}
\varepsilon_i=\begin{cases}
1,&\text{for } 1\leqslant i\leqslant m+n,\\
-1,&\text{for }m+n+1\leqslant i\leqslant m+2n. 
\end{cases}
\end{align*}

The element $A=\mathop{\sum}\limits_{i,j=1}^{m+2n} a_{ij}e_{ij}\in \mathrm{End}(V)$ with respect to this basis $\{e_i\}_{1\leqslant i\leqslant m+n}$, where $e_{ij}e_k=\delta_{jk}e_i$,  belongs to $\mathfrak{osp}_{m|2n}$ if and only if it satisfies $TA=-A^{\mathrm{st}}T$, i.e.,
\begin{equation*}
a_{ij}=-(-1)^{|j|(|i|+|j|)}\varepsilon_{i}\varepsilon_{j}a_{j'i'}\quad \forall\  i,j=1,2,\cdots,m+2n.
\end{equation*}
The ortho-symplectic Lie superalgerbra $\mathfrak{osp}_{m|2n}$ can be spanned by $F_{ij}=e_{ij}-(-1)^{|j|(|i|+|j|)}\varepsilon_{i}\varepsilon_{j}e_{j'i'}$ with parity $|F_{ij}|=|i|+|j|$.  The non-degeneracy of the bilinear form $B$ ensures us to identify $V$ with $V^*$ through the $\mathfrak{osp}_{m|2n}$-module isomorphism $\Theta$ with $e_i\mapsto \varepsilon_i e_{i'}^*$.

We note that one advantage of this selection is that the diagonal matrices constitute the Cartan subalgebra of the Lie superalgebra $\mathfrak{osp}_{m|2n}$, and the upper (resp. lower) triangular matrices coincide with $\mathfrak{n}^+$ (resp. $\mathfrak{n}^-$). Another advantage is that the definition of $\mathfrak{osp}_{m|2n}$ can be rewritten as a matrix presentation, as shown in Equation \ref{eq:osppresent}.
Denote by $h_i=F_{n+i,n+i}$ and $h'_j=F_{jj}$ for all $1\leqslant i\leqslant m$ and $1\leqslant j\leqslant n$. Then $h_1,\cdots,h_m,h'_1,\cdots,h'_n$ form a basis of the Cartan subalgebra of $\mathfrak{osp}_{m|2n}$. Let 
$\{\vare_1\ldots,\vare_m,\delta_1,\delta_2,\ldots,\delta_n\}$ be 
the basis of $\mf h^\ast$ dual to  $\{h_1,\cdots,h_m,h'_1,\cdots,h'_n\}$.  
	The corresponding even and odd roots  are given by: \begin{align*}
	\Phi_\oa:=\{\pm\vare_i\pm\vare_j, \pm\delta_k \pm\delta_l, \pm 2\delta_q\},\quad\Phi_\ob:=\{\pm\vare_p\pm \delta_q\},
\end{align*} 
and 
\begin{align*}
	\Phi_\oa:=\{\pm\vare_i\pm\vare_j, \pm2\vare_p, \pm\delta_k \pm\delta_l, \pm \delta_q\},\quad\Phi_\ob:=\{\pm\vare_p\pm \delta_q, \pm\vare_p\},
\end{align*} 
where $1\leqslant i<j \leqslant m, 1\leqslant k<l \leqslant n, ~1\leqslant p \leqslant m, 1\leqslant q \leqslant n$ for $\mathfrak{osp}_{2m|2n}$ and $\mathfrak{osp}_{2m+1|2n}$, respectively.
We fix the positive system as follows:
\begin{align*}
&\Phi^+:=\{\vare_i \pm\vare_j, 2\vare_p, \delta_k\pm\delta_l \} \cup \{\vare_p \pm \delta_q\},
\end{align*}
and 
\begin{align*}
&\Phi^+:=\{\vare_i \pm\vare_j, 2\vare_p, \delta_k\pm\delta_l, \delta_q \} \cup \{\vare_p \pm \delta_q, \vare_p\},
\end{align*}
where $1\leqslant i<j \leqslant m, 1\leqslant k<l \leqslant n, ~1\leqslant p \leqslant m, 1\leqslant q \leqslant n$ for $\mathfrak{osp}_{2m|2n}$ and $\mathfrak{osp}_{2m+1|2n}$, respectively.

Define  $\Phi^-:= -\Phi^+$ as usual. The  bilinear form $\langle\_,\_\rangle:\mf h^\ast \times \mf h^\ast \rightarrow \C$, which is induced by the super-trace form, is given by $$\langle\vare_i,\vare_j\rangle =\delta_{ij},~\langle\delta_i,\delta_j\rangle =-\delta_{i,j},~ \langle\vare_i,\delta_j\rangle=0, \text{ for $1\leqslant i,j\leqslant n$}.$$

Denote by $\mathfrak{osp}_{m|2n}^{\perp}$ the subspace of $\mathrm{End}(V)$ spanned by the elements $$e_{ij}+(-1)^{|j|(|i|+|j|)}\varepsilon_i\varepsilon_j e_{j'i'} \text{ for all }i,j=1,2,\cdots, m+2n.$$ It is a complementary subspace of $\mathfrak{osp}_{m|2n}$ in $\mathrm{End}(V)$. Furthermore, $\mathrm{End}(V)=\mathfrak{osp}_{m|2n}\oplus \mathfrak{osp}_{m|2n}^{\perp}$ as $\mathfrak{osp}_{m|2n}$-modules.

\subsubsection{The periplectic Lie superalgebra $\mathfrak{p}_{n}$}\label{perp}
Let $V=V_{\bar{0}}\oplus V_{\bar{1}}$ be a vector superspace with $\mathrm{dim}V_{\bar{0}}=\mathrm{dim}V_{\bar{1}}$. Let $(\cdot,\cdot)$ be a non-degenerate odd symmetric bilinear form on $V$. The subspace of $\mathrm{End}(V)$ that preserves $(\cdot,\cdot)$ is closed under the Lie bracket and is therefore a Lie subalgebra of $\mathrm{End}(V)$. This Lie superalgebra is referred to as the periplectic Lie superalgebra, denoted by $\mathfrak{p}(V)$. If $V=\mathbb{C}^{n|n}$, $\mathfrak{p}(V)$ is also denoted by $\mathfrak{p}_{n}$. There exists a basis $B=B_0\cup B_1$ such that $B_0=\{e_1,\cdots,e_n\}$ is a basis for $V_{\bar{0}}$, $B_1=\{e_{n+1},\cdots,e_{2n}\}$ is a basis for $V_{\bar{1}}$, and
$$(e_{n+i},e_j)=(e_j,e_{n+i})=\delta_{ij},\quad (e_i,e_j)=(e_{n+i},e_{n+j})=0, $$
for all $i,j=1,2,\cdots,n$. The matrix of the bilinear form associated with the basis $B$ is given by:
\begin{align*}
\mathfrak{B}=\left( \begin{array}{cc} 0 & I_n\\
 		I_n & 0\\
 	\end{array} \right).
\end{align*}
Using $B$, we can see that $\mathfrak{p}_{n}$ can be represented as
 \begin{align*}
 	\left\{\left. \left( \begin{array}{cc} A & B\\
 		C & -A^t\\
 	\end{array} \right)\right| ~ B ~\text{is symmetric, and}~ C ~\text{is skew-symmetric}\right\}. 
 \end{align*}  
Set
\begin{align*}
i^{\prime}=\begin{cases}
n+i, &\text{ if }1\leqslant i\leqslant n,\\
i-n, &\text{ if }n+1\leqslant i\leqslant 2n.
\end{cases}
\end{align*}
For any $1\leqslant i,j\leqslant 2n$, define  $G_{ij}:=e_{ij}-(-1)^{|j|(|i|+|j|)}e_{j'i'}\in\mathfrak{p}_{n}$. Therefore, 
\begin{align}\label{GijGj'i'}
    G_{ij}=-(-1)^{|j|(|i|+|j|)}G_{j'i'}
\end{align}
for all $i,j$.
The non-degeneracy of the bilinear form $(\cdot,\cdot)$ ensures us to identify $V$ with $V^*$ by the odd $\mathfrak{p}_{n}$-module isomorphism $\Theta\colon e_i\mapsto  e_{i'}^*$. 

We fix the Cartan subalgebra  $\h= \bigoplus_{i=1}^n \C G_{ii}$ consisting of diagonal matrices. Let $\{\vare_1, \vare_2, \ldots, \vare_n\}$ denote the basis for $\h^\ast$ dual to the standard basis $\{G_{11}, G_{22},\ldots, G_{nn}\}\subset \h$. The sets of even and odd roots are given by 
 \begin{align*}
 	&\Phi_\oa = \{\vare_i-\vare_j|~1\leqslant i\neq  j \leqslant n\}, \\
 	&\Phi_\ob = \{-\vare_i-\vare_j|~1\leqslant i< j \leqslant n\}\cup \{\vare_i+\vare_j|~1\leqslant i\leqslant  j \leqslant n\}. 
 \end{align*}

We define the set $\Phi^+$ of positive roots and the set $\Phi^-$ of negative roots as follows:
\begin{align*}
&\Phi^+:=\{\vare_i-\vare_j|~1\leqslant i< j \leqslant n\}  	\cup\{\vare_i+\vare_j|~1\leqslant i\leqslant j\leqslant n\}, \\ 
&\Phi^-:=\{-\vare_i+\vare_j|~1\leqslant i< j <\leqslant n\}  	\cup\{-\vare_i-\vare_j|~1\leqslant i< j\leqslant n\}.
\end{align*}

 The corresponding set $\Phi_\oa^+$ of positive even roots and the corresponding set   $\Phi_\ob^+$ of positive odd roots are defined as follows:  \begin{align*}  	\Phi^+_\oa:=\{\vare_i-\vare_j|~1\leqslant i< j \leqslant n\}, ~\Phi_\ob^+:=\{\vare_i+\vare_j|~1\leqslant i\leqslant j\leqslant n\}.  \end{align*} 

 Finally, we define the  bilinear form $\langle\cdot,\cdot\rangle:\mf h^\ast \times \mf h^\ast \rightarrow \C$ by declaring that  $\langle\vare_i,\vare_j\rangle = \delta_{ij}$, for $1\leqslant i,j\leqslant n.$ 
 
 Denote by $\mathfrak{p}_{n}^{\perp}$ the subspace of $\mathrm{End}(V)$ spanned by the elements 
$$\left( \begin{array}{cc} A & C\\
 		B & A^t\\
 	\end{array} \right),$$
	 for all $A,B\in\mathfrak{gl}_{n} $ with $B^t=B$ and $C^t=-C$. It is a complementary subspace of $\mathfrak{p}_{n}$ in $\mathrm{End}(V)$. Moreover, $\mathrm{End}(V)=\mathfrak{p}_{n}\oplus \mathfrak{p}_{n}^{\perp}$ as adjoint $\mathfrak{p}_{n}$-modules. Denote by $\tilde{\pi}$ the natural projection associated with the direct sum decomposition. In conclusion, we have the following proposition if we set $\mathfrak{g}^{\perp}=0$ when $\mathfrak{g}=\mathfrak{gl}_{m|n}$.

\begin{proposition}\label{le:split}
Let $\mathfrak{g}$ be a classical Lie superalgebra in \eqref{eq:glist}, and let $V$ be the corresponding natural representation of $\mathfrak{g}$, then the following exact sequence is split
$$0\longrightarrow \mathfrak{g}^{\perp}\longrightarrow \mathrm{End}(V)\xlongrightarrow{\tilde{\pi}} \mathfrak{g}\longrightarrow 0.$$
Here, $\tilde{\pi}(e_{ij})$ is defined as follows:
\begin{align*}
\tilde{\pi}(e_{ij})=\begin{cases}E_{ij},&~~ \text{if} \quad\mathfrak{g}=\mathfrak{gl}_{m|n},\\
F_{ij}/2, &~~ \text{if} \quad\mathfrak{g}=\mathfrak{osp}_{m|2n},\\
G_{ij}/2,&~~ \text{if} \quad\mathfrak{g}=\mathfrak{p}_{n},\\
H_{ij}/2,&~~ \text{if} \quad\mathfrak{g}=\mathfrak{q}_{n}.
 \end{cases}
\end{align*}
\end{proposition}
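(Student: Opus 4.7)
The proof proceeds case by case across the four families of classical Lie superalgebras listed in \eqref{eq:glist}. The case $\mathfrak{g}=\mathfrak{gl}_{m|n}$ is immediate: $\mathrm{End}(V)=\mathfrak{gl}_{m|n}$, $\mathfrak{g}^{\perp}=0$, and $\tilde{\pi}$ is the identity sending $e_{ij}$ to $E_{ij}$.

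For each of the three remaining cases I would first confirm the vector-space direct sum $\mathrm{End}(V)=\mathfrak{g}\oplus\mathfrak{g}^{\perp}$ by writing down an explicit algebraic identity for the matrix units:
$$
e_{ij}=\tfrac12 F_{ij}+\tfrac12\bigl(e_{ij}+(-1)^{|j|(|i|+|j|)}\varepsilon_i\varepsilon_j e_{j'i'}\bigr)
$$
for $\mathfrak{osp}_{m|2n}$, the same identity with $\varepsilon_i\equiv 1$ for $\mathfrak{p}_n$ (producing $\tfrac12 G_{ij}$), and $e_{ij}=\tfrac12 H_{ij}+\tfrac12(e_{ij}-e_{-i,-j})$ for $\mathfrak{q}_n$. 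In each case the second summand lies in $\mathfrak{g}^{\perp}$ by direct inspection of the spanning sets given in Sections~\ref{queer}, \ref{orth}, \ref{perp}, so these identities give $\mathrm{End}(V)=\mathfrak{g}+\mathfrak{g}^{\perp}$. Directness then reduces to $\mathfrak{g}\cap\mathfrak{g}^{\perp}=0$: for $\mathfrak{osp}$ and $\mathfrak{p}$, any element of the intersection satisfies both $TA=-A^{\mathrm{st}}T$ and $TA=+A^{\mathrm{st}}T$, forcing $A=0$ by invertibility of the Gram matrix; for $\mathfrak{q}_n$ it is immediate from the two block-matrix normal forms. The identities above are precisely the content of the projection formulas $\tilde{\pi}(e_{ij})=F_{ij}/2,\ G_{ij}/2,\ H_{ij}/2$.

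What remains --- and what carries the real content of the proposition --- is to check that $\mathfrak{g}^{\perp}$ is stable under $\mathrm{ad}(\mathfrak{g})$, so that the split is $\mathfrak{g}$-equivariant. I would phrase $\mathfrak{g}^{\perp}$ via an ``opposite'' condition to that defining $\mathfrak{g}$: for $\mathfrak{osp}_{m|2n}$ and $\mathfrak{p}_n$ this is $TA=+A^{\mathrm{st}}T$ versus $TA=-A^{\mathrm{st}}T$, and for $\mathfrak{q}_n$ it is $Y\mathcal{P}=-(-1)^{|Y|}\mathcal{P}Y$ versus $X\mathcal{P}=+(-1)^{|X|}\mathcal{P}X$. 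A direct super-commutator computation, using the rule $(AB)^{\mathrm{st}}=(-1)^{|A||B|}B^{\mathrm{st}}A^{\mathrm{st}}$ in the first two cases and $\mathcal{P}^2=I$ in the third, shows that if $X\in\mathfrak{g}$ and $Y\in\mathfrak{g}^{\perp}$ are homogeneous, then $[X,Y]$ satisfies the $\mathfrak{g}^{\perp}$-condition.

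The principal obstacle is sign bookkeeping: the super-transpose, the factors $(-1)^{|j|(|i|+|j|)}$ and $\varepsilon_i\varepsilon_j$, and the super-commutator signs $(-1)^{|X||Y|}$ all interact, and the bulk of the work is to arrange them consistently. Once the conventions are pinned down, however, each verification collapses to a short direct calculation.
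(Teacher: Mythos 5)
Your proposal is correct; the paper itself gives no explicit proof of this proposition — it simply records the decompositions $\mathrm{End}(V)=\mathfrak{g}\oplus\mathfrak{g}^{\perp}$ (as adjoint $\mathfrak{g}$-modules) that were asserted, without verification, in Subsections~\ref{queer}, \ref{orth}, \ref{perp}, plus the trivial $\mathfrak{gl}_{m|n}$ case. Your argument supplies exactly the two missing verifications: the explicit identities $e_{ij}=\tfrac12 F_{ij}+\tfrac12\bigl(e_{ij}+(-1)^{|j|(|i|+|j|)}\varepsilon_i\varepsilon_j e_{j'i'}\bigr)$ (and analogues) giving the vector-space splitting, and the stability of the ``opposite-sign'' condition under the super-adjoint action, and both computations are sound. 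One small caveat: for $\mathfrak{p}_n$ the invariant form is odd, so the defining condition is not literally $TA=-A^{\mathrm{st}}T$ with the even supertranspose used for $\mathfrak{osp}$; it is safer (and is what the paper does implicitly) to work from the block-matrix characterizations of $\mathfrak{p}_n$ and $\mathfrak{p}_n^{\perp}$ directly, which your decomposition identity handles correctly.
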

\begin{remark}
The chosen split surjective homomorphism such that $\tilde{\pi}\circ \iota=\mathbf{1}$ is unique if and only if there does not exist a non-zero $\mathfrak{g}$-module homomorphism $f$ from $\mathfrak{g}^{\perp}$ to $\mathfrak{g}$. This is valid if $\mathfrak{g}$ is one of the classical Lie superalgebras in \eqref{eq:glist}.
\begin{itemize}
    \item If $\mathfrak{g}=\mathfrak{gl}_{m|n}$, it is obvious since $\mathfrak{g}=\mathrm{End}(V)$.
    \item If $\mathfrak{g}=\mathfrak{osp}_{m|2n}$, then $\mathfrak{osp}_{m|2n}$ is a simple module with the highest weight $2\varepsilon_1$ under the adjoint action. And the highest weight space is $1$-dimensional with the basis $e_{1,1'}$. Therefore, any $\mathfrak{g}$-module homomorphism $f$ from $\mathfrak{g}^{\perp}$ to $\mathfrak{g}$ is the zero homomorphism.
    \item If $\mathfrak{g}$ is a strange Lie superalgebra $\mathfrak{p}_n$ or $\mathfrak{q}_n$, then $\mathfrak{g}$ has a unique proper submodule $[\mathfrak{g},\mathfrak{g}]$ whose codimension is $1$, and $\mathfrak{g}^*$ is isomorphic  to $\mathfrak{g}^{\perp}$ by the supertrace form on $\mathrm{End}(V)$. Therefore, $\mathfrak{g}^{\perp}$ has a unique proper submodule which is a $1$-dimensional trivial module. This means any $\mathfrak{g}$-module homomorphism $f$ from $\mathfrak{g}^{\perp}$ to $\mathfrak{g}$ is the zero homomorphism.
\end{itemize}
    
\end{remark}

\subsection{Harish-Chandra homomorphism } \label{sect:harish-chandra}
Let $\mathfrak{g}$ be a semisimple Lie algebra (resp., a basic Lie superalgebra) or queer Lie superalgebra over $\mathbb{C}$ with a triangular decomposition $\mathfrak{g}=\mathfrak{n}^-\oplus\mathfrak{h}\oplus\mathfrak{n}^+$, where $\mathfrak{h}$ is a Cartan subalgebra and $\mathfrak{n}^+$ (resp., $\mathfrak{n}^-$) is the positive (resp., negative) part of $\mathfrak{g}$ corresponding to a positive root system $\Phi^+$. By using the PBW Theorem, we obtain the decomposition $\mathrm{U}(\mathfrak{g})=\mathrm{U}(\mathfrak{h})\oplus \left(\mathfrak{n}^-\mathrm{U}(\mathfrak{g})+\mathrm{U}(\mathfrak{g})\mathfrak{n}^+\right)$. Let $\zeta\colon\mathrm{U}(\mathfrak{g})\rightarrow\mathrm{U}(\mathfrak{h})=\mathrm{S}(\mathfrak{h})$ be the associated projection. The restriction  of $\zeta$ to the center $\mathcal{Z}(\mathfrak{g})$ of $\mathrm{U}(\mathfrak{g})$ is an algebra homomorphism,  and the composite $\gamma_{-\rho}\circ\zeta\colon\mathcal{Z}(\mathfrak{g})\rightarrow\mathrm{S}(\mathfrak{h})$ of  $\zeta$  with a ``shift'' by the Weyl vector $\rho$ is called the \textit{Harish-Chandra homomorphism} of $\mathrm{U}(\mathfrak{g})$. The famous Harish-Chandra isomorphism theorem states that $\gamma_{-\rho}\circ\zeta$ induces an isomorphism from $\mathcal{Z}(\mathfrak{g})$ to the algebra of $W$-invariant polynomials if $\mathfrak{g}$ is a semisimple Lie algebra, or to the algebra of $W$-invariant supersymmetric polynomials if $\mathfrak{g}$ is a basic classical Lie superalgebra, or to the algebra of $Q$-polynomials if $\mathfrak{g}$ is a queer Lie superalgebra. More details can be found in \cite[Chapter 11]{Ca05} for classical Lie algebras,  and \cite[Section 2.2]{CW12}, \cite[Chapter 13]{Mu12}, \cite{Se83} for classical Lie superalgebras.

Let $\mathcal{X}_m=(x_1,\ldots, x_m),\ \mathcal{Y}_m=(y_1,\ldots, y_m)$ be two sets of indeterminates. A polynomial $f\in\mathbb{C}[\mathcal{X}_m,\mathcal{Y}_n]$ is called {\textit supersymmetric} if it satisfies the following conditions:
\begin{itemize}
\item $f$ is symmetric in $x_1,\ldots, x_m$;
\item $f$ is symmetric in $y_1,\ldots, y_n$;
\item The polynomial obtained from $f$ by setting $x_m=y_n=t$ is independent of $t$.
\end{itemize} 

For example, if $r\geqslant 1$, the  polynomial
$$p_{m,n}^{(r)}=(x_1^r+\ldots+x_m^r)+(-1)^{r-1}(y_1^r+\ldots+y_n^r)$$
is a supersymmetric polynomial.

Denote $I(\mathcal{X}_m,\mathcal{Y}_n)$ as the set of all supersymmetric polynomials in $x_1,\cdots,x_m$ and $y_1,\cdots,y_n$.
A polynomial $f\in\mathbb{C}[\mathcal{X}_n]$ is called \textit{$Q$-polynomial ring} if it satisfies the following conditions:
\begin{itemize}
\item $f$ is symmetric in $x_1,\ldots, x_n$.
\item The polynomial obtained from $f$ by setting $x_i=-x_j=t$ with $i\neq j$ is independent of $t$.
\end{itemize} 

For example,  the  polynomial
$$p_{n}^{(2r-1)}=x_1^{2r-1}+\ldots+x_n^{2r-1}$$
is a $Q$-polynomial.
Denote $Q(\mathcal{X}_n)$ as the set of all $Q$-polynomials in $x_1,\ldots,x_n$.

\begin{theorem}\cite[Theorem 12.4.1]{Mu12}
The supersymmetric polynomial $I(\mathcal{X}_m; \mathcal{Y}_n)$  is generated by the polynomials $p_{m,n}^{(r)}$ for $r\in\mathbb{N}$.
\end{theorem}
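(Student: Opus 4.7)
The plan is to deduce the theorem from two ingredients: a generating-function identity relating the supersymmetric power sums $p_{m,n}^{(r)}$ to a family of elementary supersymmetric polynomials, and a separate argument that these elementary polynomials generate the whole of $I(\mathcal{X}_m;\mathcal{Y}_n)$.

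First I would introduce the elementary supersymmetric polynomials $e_k^{ss}\in\mathbb{C}[\mathcal{X}_m,\mathcal{Y}_n]$ as the coefficients of $t^k$ in
\[
\prod_{i=1}^{m}(1+x_{i}t)\prod_{j=1}^{n}(1-y_{j}t)^{-1}=\sum_{k\geqslant 0}e_{k}^{ss}\,t^{k},
\]
and verify directly from the factored form that each $e_k^{ss}$ lies in $I(\mathcal{X}_m;\mathcal{Y}_n)$. Taking the logarithmic derivative would yield the super-Newton identity
\[
\log\Bigl(\prod_{i=1}^{m}(1+x_{i}t)\prod_{j=1}^{n}(1-y_{j}t)^{-1}\Bigr)=\sum_{r\geqslant 1}\frac{(-1)^{r-1}}{r}\,p_{m,n}^{(r)}\,t^{r},
\]
which, since we work in characteristic $0$, gives a triangular system expressing each $e_k^{ss}$ as a $\mathbb{Q}$-polynomial in $p_{m,n}^{(1)},\ldots,p_{m,n}^{(k)}$ and, conversely, each $p_{m,n}^{(r)}$ as a polynomial in $e_1^{ss},\ldots,e_r^{ss}$. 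Hence the subalgebras they generate coincide, and the theorem reduces to showing that $\{e_k^{ss}\}_{k\geqslant 1}$ generates $I(\mathcal{X}_m;\mathcal{Y}_n)$.

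For this generation statement I would induct on $n$. The base $n=0$ is the classical fundamental theorem on symmetric polynomials in $x_1,\ldots,x_m$, since there $e_k^{ss}$ coincides with the ordinary elementary symmetric polynomial $e_k(\mathcal{X}_m)$. For the inductive step, given $f\in I(\mathcal{X}_m;\mathcal{Y}_n)$, I would restrict $f$ along the substitution identifying $x_m$ with $y_n$, which by the cancellation axiom lands in a polynomial in one fewer variable of each type, to which the inductive hypothesis applies after relabelling. Lifting the resulting polynomial expression back through the $e_k^{ss}$, whose defining generating function visibly respects the same cancellation, produces $P(e_{1}^{ss},e_{2}^{ss},\ldots)$ that agrees with $f$ on the cancellation locus; a further argument by degree in $y_n$ combined with bi-symmetry then forces $f=P$.

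The hard part will be this final closing of the induction: converting vanishing on the cancellation locus into a genuine identity. In the ordinary symmetric case this is automatic, but in the supersymmetric setting the ideal defining the locus interacts non-trivially with the bi-symmetry in $\mathcal{X}_{m}$ and $\mathcal{Y}_{n}$, and one must argue carefully—typically by extracting a leading monomial in a suitable graded term order on $\mathbb{C}[\mathcal{X}_m,\mathcal{Y}_n]$, or by iterating the cancellation property to reduce the total degree in the $y$-variables—to conclude that no non-zero supersymmetric polynomial can vanish identically on it. Once this is established the induction closes, and together with the Newton identity above it yields the desired generation by the super power sums $p_{m,n}^{(r)}$.
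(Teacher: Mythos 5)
Your reduction from the power sums $p_{m,n}^{(r)}$ to the elementary supersymmetric polynomials $e_{k}^{ss}$ via the logarithmic derivative is sound: over a field of characteristic zero the two families generate the same subalgebra, and this part of the argument is correct and standard. (One small caveat: the generating function $\prod_i(1+x_{i}u)\prod_j(1-y_{j}u)^{-1}$ and the power sums $p_{m,n}^{(r)}=\sum x_i^r+(-1)^{r-1}\sum y_j^r$ are supersymmetric with respect to the cancellation $x_m=-y_n=t$, not the cancellation $x_m=y_n=t$ as the paper's definition literally states --- the paper's definition is internally inconsistent, and you have implicitly and correctly adopted the consistent reading --- but it is worth saying this out loud.)

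The gap is in the closing of the induction. Your plan is to show that the difference $g:=f-P$, which vanishes on the cancellation locus, must be zero, and you even state this as the key claim: ``no non-zero supersymmetric polynomial can vanish identically on it.'' That claim is false for all $m,n\geq 1$. The polynomial
\[
R \;=\; \prod_{i=1}^{m}\prod_{j=1}^{n}(x_{i}+y_{j})
\]
is symmetric in each set of variables and vanishes identically on the locus $x_m=-y_n$ (hence is supersymmetric, the cancellation condition being trivially satisfied), yet $R\neq 0$. In fact the kernel of the restriction map $\phi\colon I(\mathcal{X}_m;\mathcal{Y}_n)\to I(\mathcal{X}_{m-1};\mathcal{Y}_{n-1})$ is precisely the ideal $R\cdot\bigl(\Lambda_{m}\otimes\Lambda_{n}\bigr)$, where $\Lambda_m\otimes\Lambda_n=\mathbb{C}[\mathcal{X}_m,\mathcal{Y}_n]^{\mathfrak{S}_m\times\mathfrak{S}_n}$. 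Already at $m=n=1$, your own $P$ would be $0$ when $f=e_1^{ss}=x_1+y_1$ (since $\phi(f)=0$), and no ``term order'' or ``degree in $y_n$'' argument can force $f=0$. So the induction does not show $f=P$; it only shows $f-P\in R\cdot(\Lambda_m\otimes\Lambda_n)$, and to finish one needs the genuinely nontrivial lemma that the whole ideal $R\cdot(\Lambda_m\otimes\Lambda_n)$ is contained in the subalgebra generated by the $e_{k}^{ss}$. That lemma is where the real content of the theorem lies, and your proposal neither states nor proves it; the strategy as written is aimed at establishing a statement that is not true, so it cannot be repaired by ``arguing more carefully'' along the same lines.
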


\begin{theorem}\cite[Section 8, Chapter 3 ]{Mac}
The $Q$-polynomial $Q(x_1,\ldots,x_n)$ is generated by the polynomials $p_{n}^{(2r-1)}$ for $r\in\mathbb{N}$.
\end{theorem}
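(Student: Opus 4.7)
The inclusion $\mathbb{C}[p_n^{(2r-1)}:r\in\mathbb{N}]\subseteq Q(\mathcal{X}_n)$ is immediate since each $p_n^{(2r-1)}$ is a $Q$-polynomial (the terms from $x_i=t,\,x_j=-t$ cancel as $t^{2r-1}+(-t)^{2r-1}=0$) and $Q$-polynomials form a subring. The content is the reverse inclusion, which I would prove by induction on $n$, taking the cases $n\le 1$ as vacuous.

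In the inductive step for $n\ge 2$, given $f\in Q(\mathcal{X}_n)$, I would consider $g:=f(0,0,x_3,\ldots,x_n)$; since the substitution $x_1=x_2=0$ commutes with substitutions of the form $x_i=s,\,x_j=-s$ for $i,j\ge 3$, one has $g\in Q(\mathcal{X}_{n-2})$. The inductive hypothesis gives $g=G(p_{n-2}^{(1)},p_{n-2}^{(3)},\ldots)$ for a polynomial $G$, and because $p_n^{(k)}|_{x_1=x_2=0}=p_{n-2}^{(k)}$, the element $\tilde g:=G(p_n^{(1)},p_n^{(3)},\ldots)\in\mathbb{C}[p_n^{(2r-1)}]$ restricts to $g$. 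Then $h:=f-\tilde g$ lies in $Q(\mathcal{X}_n)$ with $h|_{x_1=x_2=0}=0$; combining this with $h(t,-t,x_3,\ldots,x_n)=h(0,0,x_3,\ldots,x_n)$ yields that $h$ vanishes on $\{x_1+x_2=0\}$ and, by $S_n$-symmetry, on every $\{x_i+x_j=0\}$. Thus $h$ is divisible by $P:=\prod_{i<j}(x_i+x_j)$, giving $h=Pq$ for some symmetric $q$.

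The hard part is upgrading $h=Pq$ to $h\in\mathbb{C}[p_n^{(2r-1)}]$. My plan for this is to lift the problem to the ring $\Lambda=\mathbb{C}[p_1,p_2,\ldots]$ of symmetric functions in infinitely many variables, where the $p_k$'s are algebraically independent. In $\Lambda$ one shows directly that $\mathbb{C}[p_1,p_3,p_5,\ldots]$ coincides with the set of $F\in\Lambda$ satisfying $F(t,-t,x_1,x_2,\ldots)=F(0,0,x_1,x_2,\ldots)$: the condition translates into the polynomial identity $F(p_1,p_2+2t^2,p_3,p_4+2t^4,\ldots)=F(p_1,p_2,\ldots)$ in $\mathbb{C}[t,p_1,p_2,\ldots]$, and differentiating in $t$ with successive extraction of coefficients of $t^{2k}$ inductively forces $\partial F/\partial p_{2k}=0$. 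Transferring back through $\Lambda\twoheadrightarrow\Lambda_n$ (which is a degree-preserving isomorphism in each fixed degree $d$ once $n\ge d$) then yields $f\in\mathbb{C}[p_n^{(2r-1)}]$ whenever $n\ge d+2$, with the smaller-$n$ case reduced by descent from a suitable large-$N$ lift. The delicate technical point is constructing this large-$N$ lift so as to remain in $Q(\mathcal{X}_N)$, since the naive power-sum lift of the polynomial representation need not itself be a $Q$-polynomial in $N$ variables.
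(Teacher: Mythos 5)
The paper cites this result from Macdonald without proof, so there is no internal argument to compare; assessing your sketch on its own terms, the ``delicate technical point'' you flag at the end is not a cleanup item but is precisely the content of the finite-variable statement, and your sketch leaves it open. Your $\Lambda$-lemma and the lift through $\Lambda^{(d)}\xrightarrow{\ \sim\ }\Lambda_n^{(d)}$ are correct in the stable range (in fact they work already for $n\ge d$ rather than $n\ge d+2$: $G(t):=F(t,-t,\cdot)-F(0,0,\cdot)$ is even in $t$ since $F$ is symmetric in the first two slots, so its coefficients lie in $x$-degree $\le d-2$ and vanish once they vanish in $n-2\ge d-2$ variables). But when $n<d$ there is no canonical lift of $f\in Q(\mathcal{X}_n)$ to $\Lambda$, and the differentiation argument cannot be run inside $\mathbb{C}[x_1,\ldots,x_n]^{\mathfrak{S}_n}=\mathbb{C}[p_1,\ldots,p_n]$: writing $f=F(p_1,\ldots,p_n)$, the $Q$-condition becomes $F(\bar p_1,\bar p_2+2t^2,\ldots)=F(\bar p_1,\bar p_2,\ldots)$ where the $\bar p_i$ are power sums in only $n-2$ variables, and these are \emph{not} algebraically independent, so one cannot read off $\partial F/\partial p_{2k}=0$. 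That this is a genuine obstruction and not a technicality is visible already for $n=2$: there $\mathbb{C}[p_1,p_3]=\mathbb{C}[p_1,p_1p_2]\subsetneq Q(\mathcal{X}_2)=\mathbb{C}+p_1\mathbb{C}[p_1,p_2]$, and one needs $p_5=\tfrac14(5p_1p_2^2-p_1^5)$ to reach $p_1p_2^2$ in degree $5$; so the theorem requires infinitely many odd power sums even for fixed small $n$, and controlling the relations among the $p_n^{(2r-1)}$ in $n$ variables is exactly what a complete proof must do.

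The induction-on-$n$ skeleton you set up but then abandon also does not close as written: once $P=\prod_{i<j}(x_i+x_j)$ divides $h$, both $h(t,-t,\cdot)$ and $h(0,0,\cdot)$ vanish identically, so the $Q$-property places no constraint whatsoever on the cofactor $q$, and there is no evident reason for $q$ (or $h=Pq$) to lie in $\mathbb{C}[p_n^{(2r-1)}:r\in\mathbb{N}]$. In short, the proposal correctly handles the easy inclusion, the infinite-variable characterization, and the stable range $n\ge d$, but the descent to $n<d$ --- which is where the finite-variable theorem actually bites --- is missing.
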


Recall that the Weyl group $W$ for $\mathfrak{gl}_{m|n}$ is $W=\mathfrak{S}_m\times \mathfrak{S}_n$ and $S(\mathfrak{h})$ can be identified with $\mathbb{C}[\mathfrak{h}]:=\mathbb{C}[h_1,\ldots,h_m;h'_1,\ldots, h'_n]$. The Weyl group $W$ for $\mathfrak{osp}_{2m+1|2n}$ is $W=(\mathfrak{S}_m\ltimes\mathbb{Z}_2^m)\times (\mathfrak{S}_n\ltimes\mathbb{Z}_2^n)$ and the Weyl group $W'$ of  $\mathfrak{osp}_{2m|2n}$ an index 2 subgroup of $W=(\mathfrak{S}_m\ltimes\mathbb{Z}_2^m)\times (\mathfrak{S}_n\ltimes\mathbb{Z}_2^n)$ with only an even number of signs in $\mathbb{Z}_2^m$ permitted. Let $S(\mathfrak{h})^{W}_{sup}$ be the subalgebra of $S(\mathfrak{h})$ consisting of all $W$-invariant supersymmetric functions (resp. $Q$-polynomial if $\mathfrak{g}=\mathfrak{q}_n$). Set
\begin{align*}
    &I(\mathfrak{h})=I(h_1,\ldots,h_m; h'_1,\ldots,h'_n),\\
    &J(\mathfrak{h})=I(h_1^2,\ldots,h_m^2; (h'_1)^2,\ldots,(h'_n)^2),\\
    &Q(\mathfrak{h})=Q(h_1,\cdots,h_n).
\end{align*}

\begin{theorem}[Gorelik-Kac-Sergeev]
The map $\gamma_{-\rho}\circ\zeta$ is an algebra isomorphism from $\mathcal{Z}(\mathfrak{g})$ to $S(\mathfrak{h})^{W}_{sup}$. Moreover, 
\begin{align*}
S(\mathfrak{h})^{W}_{sup}=
\begin{cases}
I(\mathfrak{h}),&\text{if }\mathfrak{g}=\mathfrak{gl}_{m|n},\\
J(\mathfrak{h}),&\text{if }\mathfrak{g}=\mathfrak{osp}_{2m+1|2n},\\
J(\mathfrak{h})+\Phi_{m,n}\mathbb{C}[\mathfrak{h}]^{W},&\text{if }\mathfrak{g}=\mathfrak{osp}_{2m|2n},\\
Q(\mathfrak{h}), &\text{if }\mathfrak{g}=\mathfrak{q}_{n},
\end{cases}
\end{align*}
where $\Phi_{m,n}=(h_1\cdots h_m)\mathop{\prod}\limits_{i,j}(h_i^2-(h'_j)^2)$ and $W=(\mathfrak{S}_m\ltimes\mathbb{Z}_2^m)\times (\mathfrak{S}_n\ltimes\mathbb{Z}_2^n)$.
\end{theorem}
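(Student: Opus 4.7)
The plan is to split the theorem into three tasks: showing that $\gamma_{-\rho}\circ\zeta$ lands in the claimed subalgebra, proving injectivity, and proving surjectivity case-by-case.

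First, I would track the image of $z\in\mathcal{Z}(\g)$ by evaluating $z$ on a Verma module $M(\la)$: a short computation on the highest weight vector $v_\la$ gives $z\cdot v_\la = \bigl((\gamma_{-\rho}\circ\zeta)(z)\bigr)(\la)\, v_\la$, so the image is a polynomial function on $\h^*$ that is constant on central characters. Weyl-group invariance comes from the usual linkage $M(s_\alpha\cdot\la)\sim M(\la)$ at even reflections. The supersymmetry condition in types $\gl_{m|n}$ and $\mf{osp}$ is forced by the odd-linkage principle: if $\alpha$ is an isotropic odd root and $(\la+\rho,\alpha)=0$, then $M(\la-\alpha)\hookrightarrow M(\la)$, so the two Vermas share a central character; translating through $\gamma_{-\rho}$ this is precisely the condition that setting $h_i=h'_j=t$ removes $t$-dependence. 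The $Q$-polynomial condition for $\mf{q}_n$ has an analogous origin in the odd part of its Cartan subalgebra together with Sergeev's Clifford-module analysis of the corresponding weight spaces. In type $\mf{osp}_{2m|2n}$ one also checks that $\Phi_{m,n}\,\C[\h]^W$ is preserved by these odd-root linkages, which accounts for the extra summand.

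For injectivity, if $(\gamma_{-\rho}\circ\zeta)(z)=0$ then $z$ acts as zero on every Verma module, hence on all finite-dimensional irreducible quotients. Since a central element in $\mathrm{U}(\g)$ is determined by its actions on a Zariski-dense family of finite-dimensional representations (equivalently, by all central characters), we conclude $z=0$.

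Surjectivity is the case-by-case heart of the argument. For $\gl_{m|n}$, the Gelfand invariants from Theorem \ref{Gelfandgl} have Harish-Chandra images that include all power sums $p^{(r)}_{m,n}$, which generate $I(\h)$ by the quoted structure theorem for supersymmetric polynomials. For $\mf{osp}_{2m+1|2n}$, the Gelfand invariants of Theorem \ref{Gelfandosp} supply all even power sums in the squared coordinates, generating $J(\h)$. For $\mf{q}_n$, the queer analog of Gelfand invariants (built from the odd trace on $\mathrm{End}(V)$) yields the odd power sums $p_n^{(2r-1)}$ that generate $Q(\h)$. I expect the main obstacle to be the exceptional generator $\Phi_{m,n}$ in type $\mf{osp}_{2m|2n}$: it lies outside the tensor--Schur--Weyl pipeline developed in this paper and must be produced by an independent super-Pfaffian construction (e.g.\ the Deligne--Lehrer--Zhang Pfaffian), after which one verifies by a direct Harish-Chandra calculation that its image equals $\Phi_{m,n}$ up to a $W$-invariant factor. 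This obstruction is exactly why the paper's main surjectivity result \ref{eta'surjective} excludes $\mf{osp}_{2m|2n}$.
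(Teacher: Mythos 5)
The paper does not prove this theorem---it is quoted as a known result of Gorelik, Kac, and Sergeev, with the reader pointed to \cite{Ca05}, \cite{CW12}, \cite{Mu12}, \cite{Se83}---so there is no internal proof to compare against. Your sketch reconstructs the standard literature argument in outline, and the overall structure (Verma-module central characters to describe the image, odd-root linkage for the supersymmetry and $Q$-polynomial constraints, even Weyl-group linkage for $W$-invariance, case-by-case generator production for surjectivity, and the super-Pfaffian for the exceptional $\mathfrak{osp}_{2m|2n}$ summand) is the right skeleton.

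Two points, though, need tightening. First, the injectivity step as you present it detours through finite-dimensional quotients of Verma modules and then invokes a Zariski-density principle for the center; for Lie superalgebras the finite-dimensional category need not separate all of $\mathrm{U}(\mathfrak{g})$, and not every Verma module has finite-dimensional quotients, so this route is fragile. The standard proof argues directly with Verma modules over a generic (polynomial) highest weight, or equivalently via the PBW filtration, to show that a central element with vanishing Harish-Chandra image must be zero; no finite-dimensional representations are needed. Second, in the $\mathfrak{q}_n$ case the Cartan subalgebra $\mathfrak{h}=\mathfrak{h}_{\bar{0}}\oplus\mathfrak{h}_{\bar{1}}$ is non-abelian, so $\mathrm{U}(\mathfrak{h})\neq S(\mathfrak{h})$ and a Clifford-algebra factor intervenes before the Harish-Chandra projection can land in $S(\mathfrak{h}_{\bar{0}})$. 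Your appeal to Sergeev's Clifford-module analysis points at the right tool, but this reduction is exactly where the work in the $\mathfrak{q}_n$ case lies and needs to be made explicit rather than asserted. Your identification of the super-Pfaffian as the essential extra input in type $\mathfrak{osp}_{2m|2n}$, and the resulting exclusion of that case from Theorem \ref{eta'surjective}, is on point.
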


\subsection{Brauer-Schur-Weyl-Sergeev duality}\label{sect:schurweyl}
The Schur-Weyl duality is a signficantly influential topic in representation theory as it enables the construction of related associative algebras. In this subsection, we formulate the Schur-Weyl dualities for classical Lie superalgebras. Subsequently, we will utilize these dualities to construct the Gelfand invariant in the  subsequent subsection. 
 
Let $\mathfrak{g}$ be a classical Lie superalgebra as in \eqref{eq:glist} and let $V$ be the natural representation of the $\mathfrak{g}$-module. Consequently, the $k$-fold tensor product $V^{\otimes k}$ is naturally a $\mathfrak{g}$-module, denoted  by $\Phi_k$. On the other hand, the action $\Psi_k$ of the symmetric group $\mathfrak{S}_k$ on $V^{\otimes k}$ is defined by
\begin{align}
&(i,i+1)\cdot v_1\otimes v_2\otimes\cdots\otimes v_i\otimes v_{i+1}\otimes\cdots \otimes v_k\nonumber\\
=&(-1)^{|v_i||v_{i+1}|}v_1\otimes v_2\otimes\cdots\otimes v_{i+1}\otimes v_{i}\otimes\cdots \otimes v_k,\quad1\leqslant i\leqslant k-1,
\end{align}
where $(i,j)$ denotes a transposition in $\mathfrak{S}_k$ and $v_i, v_{i+1}$ are homogeneous elements. 
The Clifford superalgebra $\mathcal{C}_k$ , as defined in \cite[Definition 3.33]{CW12}, is the $\mathbb{C}$-superalgebra generated by the odd elements $c_1,c_2,\cdots,c_k$, subject to the relations
$$c_i^2=1, \quad  c_ic_j=-c_jc_i,\quad 1\leqslant i\neq j\leqslant k.$$
The symmetric group $\mathfrak{S}_k$ acts as automorphisms on the algebra $\mathcal{C}_k$ naturally. We will refer to the semi-direct product $\mathcal{H}_k\colon=\mathbb{C}\mathfrak{S}_k\ltimes\mathcal{C}_k $ as the \textit{Hecke-Clifford algebra}, where  $\sigma c_i=c_{\sigma(i)}\sigma$ for all $\sigma\in\mathfrak{S}_k$. Note that the algebra $\mathcal{H}_k$ is naturally a superalgebra by letting each $\sigma\in \mathfrak{S}_k$ be even and each $c_i$ be odd.
The tensor space $V^{\otimes k}$ is a representation of $\mathfrak{gl}_{n|n}$, and hence of its subalgebra $\mathfrak{q}_{n}$. Moreover, $V^{\otimes k} $ is also a representation of the symmetric group $\mathfrak{S}_k$. Define the Clifford superalgebra $\mathcal{C}_k$  action on $V^{\otimes k}$ by
$$c_i\cdot (v_1\otimes v_2\otimes\cdots \otimes v_k)=(-1)^{|v_1|+|v_2|+\cdots+|v_{i-1}|}v_1\otimes \cdots\otimes v_{i-1}\otimes \mathcal{P}v_i\otimes \cdots\otimes v_k,$$
where $\mathcal{P}$ is defined in \eqref{eq:cliffact}.
\begin{definition}
Let $k\in\mathbb{Z}_+$ and $\delta\in\mathbb{C}$. The \textit{Brauer algebra} $\mathcal{B}_k(\delta)$ is an associative unital  $\mathbb{C}$-algebra generated by the elements 
$s_i$ and $e_i$, for $i=1,\ldots,k-1,$ subject to the following relations
\begin{align*} 
&s_i^2=1, \ e_i^2=\delta e_i,\ e_is_i=e_i=s_ie_i, &&1\leqslant i\leqslant k-1,\\
&s_is_j=s_js_i,\ s_ie_j=e_js_i,\ e_ie_j=e_je_i,&&2\leqslant|i-j|,\\
&s_is_{i+1}s_i=s_{i+1}s_is_{i+1},\ e_ie_{i+1}e_i=e_{i+1},\ e_{i+1}e_{i}e_{i+1}=e_i, &&1\leqslant i\leqslant k-2,\\
&s_ie_{i+1}e_i=s_{i+1}e_i,\ s_{i+1}e_ie_{i+1}=s_ie_{i+1}, &&1\leqslant i\leqslant k-2.
\end{align*}
\end{definition}
Define the contraction map $c\in \mathrm{End}(V^{\otimes 2})$ as follows:
$$c(v_1\otimes v_2)=( v_1,v_2)\mathop{\sum}\limits_{i=1}^{n}\varepsilon_ie_i\otimes e_{i'} .$$

It is straightforward to show that
$$x\cdot c(v_1\otimes v_2)=c( x\cdot(v_1\otimes v_2))=0,\text{ for any }v_1, v_2\in V,\text{ and }  x\in\mathfrak{osp}_{m|2n}.$$ Therefore, $c\in \mathrm{End}_{\mathfrak{osp}_{m|2n}}(V^{\otimes 2})$ and $c(V^{\otimes 2})$ is an 1-dimensional submodule of $V\otimes V$. Define $e_i\in \mathrm{End}_{\mathfrak{osp}_{m|2n}}(V^{\otimes k})$ for $k\geqslant2$ and $i=1,2,\cdots,k-1$ by
$$e_i=I_V^{\otimes (i-1)}\otimes c\otimes I_V^{\otimes (k-i-1)}.$$

\begin{definition}
The \textit{Periplectic Brauer algebra} $\mathcal{B}^-_k(0), k\in\mathbb{Z}_+$, is an associative unital  $\mathbb{C}$-algebra generated by 
$$s_i,\ e_i,\quad\text{for } i=1,\ldots,k-1,$$
 subject to the relations
\begin{align*} 
&s_i^2=1,\ e_i^2=0,\ e_is_i=e_i,\ s_ie_i=-e_i, &&1\leqslant i\leqslant k-1,\\
&s_is_j=s_js_i,\ s_ie_j=e_js_i,\ e_ie_j=e_je_i,&&2\leqslant|i-j|,\\
&s_is_{i+1}s_i=s_{i+1}s_is_{i+1},\ e_ie_{i+1}e_i=-e_{i+1},\ e_{i+1}e_{i}e_{i+1}=-e_i, &&1\leqslant i\leqslant k-2,\\
&e_ie_{i+1}s_i=-e_is_{i+1},\ s_{i+1}e_ie_{i+1}=-s_ie_{i+1}, &&1\leqslant i\leqslant k-2.
\end{align*}
\end{definition}

\begin{remark}
The Periplectic Brauer algebra mentioned here is the opposite algebra of the one described in \cite{Mo03}, as the action on $V^{\otimes k}$ occurs on different sides.
\end{remark}
In this case, the contraction map $c\in \mathrm{End}(V^{\otimes 2})$ is defined as
$$c(v_1\otimes v_2)=( v_1,v_2)\mathop{\sum}\limits_{i=1}^{n}(e_i\otimes e_{n+i}-e_{n+i}\otimes e_i).$$

Since the $\mathfrak{p}_{n}$-module invariant is annihilated by $\mathfrak{p}_{n}$, it can be easily shown that $x\cdot c(v_1\otimes v_2)=0=c( x\cdot(v_1\otimes v_2))$,  for any $v_1, v_2\in V,\text{ and }  x\in\mathfrak{p}_{n}$. Therefore, $c\in \mathrm{End}_{\mathfrak{p}_{n}}(V^{\otimes 2})$ and $c(V^{\otimes 2})$ is a 1-dimensional submodule of $V\otimes V$. Additionally, $e_i\in \mathrm{End}_{\mathfrak{p}_{n}}(V^{\otimes k})$ is defined for $k\geqslant2$ and $i=1,2,\cdots,k-1$ as
$$e_i=I_V^{\otimes (i-1)}\otimes c\otimes I_V^{\otimes (k-i-1)}.$$

Let $\mathcal{A}_k$ denote the symmetric group algebra $\mathbb{C}\mathfrak{S}_k$, the Hecke-Clifford algebra $\mathcal{H}_k$, the Brauer algebra $\mathcal{B}_k(2m+1-2n)$, or the Periplectic Brauer algebra $\mathcal{B}^-_k(0)$ if $\mathfrak{g}$ is $\mathfrak{gl}_{m|n},\ \mathfrak{q}_{n},\ \mathfrak{osp}_{2m+1|2n}$ or $\mathfrak{p}_{n}$, respectively. Then, the following theorem, known as Schur-Sergeev duality, holds:
\begin{theorem}\label{thm:sw}[Schur-Sergeev duality]
Let $\mathfrak{g}$ be the Lie superalgebra listed in \eqref{eq:glist}, and let $V$ be the corresponding natural representation. Then, the actions of $\Phi_k$ and $\Phi_k$ on $V^{\otimes k}$ commute with each other. Furthermore, the algebra homomorphism
\begin{align*}
\mathcal{A}_k\xlongrightarrow{\Psi_k}\mathrm{End}_{\mathfrak{g}}(V^{\otimes k})
\end{align*}
is surjective, except in the case when $\mathfrak{g}=\mathfrak{osp}_{2m|2n}$. 
\end{theorem}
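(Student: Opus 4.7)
The plan is to handle commutation and surjectivity separately, and within each to reduce the four cases to a common framework as much as possible.

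\textbf{Commutation.} I would verify $[\Psi_k(a),\Phi_k(x)]=0$ by checking on algebra generators of $\mathcal{A}_k$ and on elements $x\in\mathfrak{g}$. For the transpositions $s_i=(i,i+1)$, a direct Koszul-sign computation in $\mathrm{End}(V^{\otimes k})$ shows commutation with the diagonal action of any $x\in\mathfrak{gl}(V)$; this covers all four cases at once since each $\mathfrak{g}$ sits inside $\mathfrak{gl}(V)$. For $\mathfrak{q}_n$, commutation with the Clifford generators $c_i$ is immediate from the definition of $\mathfrak{q}_n$ as the supercommutant of $\mathcal{P}$, as recorded in \eqref{eq::max::qn}. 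For $\mathfrak{osp}_{2m+1|2n}$ and $\mathfrak{p}_n$, the commutation of the contraction $c$ with $\mathfrak{g}$ has been checked in the paragraphs introducing $c$ (the image $c(V\otimes V)$ is the trivial $\mathfrak{g}$-submodule), and the generators $e_i$ are just $c$ tensored with identities, so commutation follows. The defining relations of $\mathcal{A}_k$ on $V^{\otimes k}$ (including the sign-twisted periplectic relations $s_ie_i=-e_i$ and $e_ie_{i+1}e_i=-e_{i+1}$) are then verified by a uniform bookkeeping using the tensor notation \eqref{tensornotation}.

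\textbf{Surjectivity, linear cases.} For $\mathfrak{gl}_{m|n}$, I would use the canonical isomorphism $\mathrm{End}_{\mathfrak{g}}(V^{\otimes k})\cong [\mathrm{End}(V)^{\otimes k}]^{\mathfrak{g}}\cong [(V\otimes V^*)^{\otimes k}]^{\mathfrak{g}}$ and invoke the first fundamental theorem of invariant theory for the general linear Lie superalgebra: this invariant space is spanned by the images of $\mathfrak{S}_k$. For $\mathfrak{q}_n$, the same identification combined with the $\mathcal{P}$-equivariant isomorphism $V\cong\Pi V^*$ realises $\mathrm{End}_{\mathfrak{q}_n}(V^{\otimes k})$ as the invariants of $V^{\otimes 2k}$ under the subalgebra of $\mathfrak{gl}_{n|n}$ commuting with $\mathcal{P}$, and Sergeev's FFT then identifies the spanning set with Hecke--Clifford monomials, giving surjectivity of $\Psi_k$ for $\mathcal{H}_k$.

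\textbf{Surjectivity, diagrammatic cases.} For $\mathfrak{osp}_{2m+1|2n}$ the non-degenerate even bilinear form $B$ supplies an isomorphism $V\cong V^*$, converting $\mathrm{End}_{\mathfrak{g}}(V^{\otimes k})$ into $(V^{\otimes 2k})^{\mathfrak{g}}$. The corresponding FFT (Lehrer--Zhang, see \cite{LZ12,LZ15,DLZ18}) states that in the odd-dimensional orthogonal direction the invariants are spanned by complete contractions, parametrised by Brauer diagrams on $2k$ points; a diagram is realised as a word in the $s_i$ and $e_i$, so $\Psi_k$ surjects onto $\mathrm{End}_{\mathfrak{g}}(V^{\otimes k})$. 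For $\mathfrak{p}_n$, the same strategy applies with the odd symmetric form $(\cdot,\cdot)$ producing an odd isomorphism $V\cong V^*$; the periplectic FFT yields a spanning set of signed perfect matchings on $2k$ points, which coincide with the images under $\Psi_k$ of the products of $s_i$'s and $e_i$'s once the periplectic sign conventions are matched.

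\textbf{Main obstacle and the $\mathfrak{osp}_{2m|2n}$ exclusion.} The principal technical difficulty is the careful bookkeeping of Koszul signs when translating between the diagrammatic presentation of $\mathcal{A}_k$ and the tensor-theoretic contractions, especially for the periplectic relations with their asymmetry $s_ie_i=e_i$ versus $s_ie_i=-e_i$. The exclusion of $\mathfrak{osp}_{2m|2n}$ is explained by the presence of the super-Pfaffian invariant of \cite{LZ21,LZ24}: for $k$ sufficiently large $(V^{\otimes 2k})^{\mathfrak{g}}$ contains a Pfaffian-type element not expressible as a sum of complete contractions, hence not in the image of $\Psi_k$.
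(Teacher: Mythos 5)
The paper does not actually prove Theorem~\ref{thm:sw}: it is recalled as background in Subsection~\ref{sect:schurweyl}, a consequence of the known Schur--Weyl--Sergeev--Brauer dualities, with pointers to \cite{CW12, Se84, ES16, Mo03, DLZ18}. So there is no proof in the text to compare your proposal against; you are reconstructing the standard argument, and your overall plan (commutation by checking generators; surjectivity via first fundamental theorems and the identification $\mathrm{End}_{\mathfrak{g}}(V^{\otimes k})\cong [\mathrm{End}(V)^{\otimes k}]^{\mathfrak{g}}$) is correct in outline.

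Two points need attention. First, for $\mathfrak{q}_n$ the natural module is \emph{not} self-dual, even up to parity shift. With the Borel fixed in Subsection~\ref{queer}, $V$ has highest weight $\vare_1$ while $V^*$ has highest weight $-\vare_n$, so there is no $\mathfrak q_n$-isomorphism $V\cong\Pi V^*$ (indeed $\mathfrak q_n$ preserves no bilinear form on $V$; that would force $\mathfrak q_n$ into an $\mathfrak{osp}$ or $\mathfrak p$). The intended reduction should instead go through the mixed tensor space $(V\otimes V^*)^{\otimes k}$ directly, invoking Sergeev's FFT \cite{Se84} for that space (or, equivalently, the semisimplicity/double-centralizer argument of \cite[Chapter 3]{CW12}). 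Second, a smaller imprecision: the FFT of \cite{DLZ18,LZ17b} spans $(V^{\otimes 2k})^{\mathrm{OSP}(V)}$ by complete contractions for \emph{every} $m$; what is special to $\mathfrak{osp}_{2m+1|2n}$ is that $-I$ lies outside the identity component and acts trivially on $V^{\otimes 2k}$, so the supergroup invariants coincide with the Lie-superalgebra invariants. For $\mathfrak{osp}_{2m|2n}$ this coincidence fails, and the extra Lie-superalgebra invariants are accounted for by the super Pfaffian; the relevant reference for that is \cite{LZ17a} rather than \cite{LZ21,LZ24}. With these corrections the proposal is the standard and essentially complete argument.
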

The surjectivity of $\Psi_k$ is of significant importance in this paper, as it guarantees that one can find all the central elements of the corresponding universal enveloping algebra. 


\section{Invariants for classical Lie superalgebras}\label{se:Centers}
In this section, we will investigate the relationship between the tensor algebra $T(\mathfrak{g})$, the supersymmetric algebra $S(\mathfrak{g})$ and the universal enveloping algebra $\mathrm{U}(\mathfrak{g})$ of a Lie superalgebra $\mathfrak{g}$, see the non-commutative diagram \eqref{eq:TSUrelation}. Consequently, we obtain the precise relationship between their $\mathfrak{g}$-invariant subalgebras, see the Theorem \ref{etaeta'}. As an application, we employ the formula and the super-analog of the Schur-Weyl dualities to determine the Gelfand invariants and the generators of the central elements.
We refer the reader to \cite{M18} for a fairly comprehensive introduction to Gelfand invariants of Lie superalgebras.

For any two sequences $\mathbf{x}=(x_1,x_2,\cdots,x_k),\mathbf{y}=(y_1,y_2,\cdots,y_k)\in\mathbb{Z}_2^k$, we denote 
\begin{align*}
p(\mathbf{x},\mathbf{y})\colon=\mathop{\prod}\limits_{i>j}(-1)^{x_iy_j}.
\end{align*}
It is clear that
$p(\mathbf{x}+\mathbf{y},\mathbf{z})=p(\mathbf{x},\mathbf{z})p(\mathbf{y},\mathbf{z}), \ p(\mathbf{x},\mathbf{y}+\mathbf{z})=p(\mathbf{x},\mathbf{y})p(\mathbf{x},\mathbf{z})$
and
$$p(\mathbf{x},\mathbf{y})p(\mathbf{y},\mathbf{x})=\mathop{\prod}\limits_{i=1}^{k}(-1)^{x_iy_i}\mathop{\prod}\limits_{i,j=1}^{k}(-1)^{x_iy_j},$$
for all $\mathbf{x},\mathbf{y},\mathbf{z}\in\mathbb{Z}_2^k$.

For any $\mathbf{x}=(x_1,x_2,\cdots,x_k)\in\mathbb{Z}_2^k$ and $\sigma\in \mathfrak{S}_k$, we define $\gamma(\mathbf{x},\sigma)$ by the rule
\begin{align*}
\gamma(\mathbf{x},\sigma)\colon=\mathop{\prod}\limits_{\substack{i<j\\ \sigma(i)>\sigma(j)}}(-1)^{x_{\sigma(i)}x_{\sigma(j)}}.
\end{align*}

Let $V$ be a $\mathbb{Z}_2$-graded vector space, and let $T(V)$ be the tensor algebra on $V$. The \textit{supersymmetric algebra} $S(V)$ on $V$ is defined as the quotient of $T(V)$ by the ideal generated by 
$v\otimes w-(-1)^{|v||w|}w\otimes v$
for all homogeneous elements $v,w\in V$. Therefore, we have
$$v_{1}v_{2}\cdots v_{k}=\gamma(\mathbf{v},\sigma)v_{\sigma(1)}v_{\sigma(2)}\cdots v_{\sigma(k)},$$
for all $\mathbf{v}=v_1 v_2\cdots v_k\in S(V)$ and $\sigma\in\mathfrak{S}_k$, where $v_1,v_2,\cdots,v_k\in V$. 

Here, for convenience, we utilize the same notation $\mathbf{v}$ in different contexts, which will not lead to any confusion. In the case of $p(\mathbf{v},\mathbf{w})$ and $\gamma(\mathbf{v},\sigma)$, the symbol $\mathbf{v}$ represents the parities $(|v_1|,|v_2|,\cdots,|v_k|)$, and later we also employ $\mathbf{v}$ to denote a simple tensor $v_1\otimes v_2\otimes \cdots \otimes v_k\in V^{\otimes k}$. 

Define the action of $\mathfrak{S}_k$ on $V^{\otimes k}$ to be
\begin{align}\label{sigma}
\sigma\cdot \mathbf{v}\colon=\gamma(\mathbf{v},\sigma^{-1})(v_{\sigma^{-1}(1)}\otimes v_{\sigma^{-1}(2)}\otimes \cdots\otimes v_{\sigma^{-1}(k)}),
\end{align}
for all $\sigma\in \mathfrak{S}_k$. Let  $\mathbf{v}_{\sigma}=\sigma^{-1}\cdot \mathbf{v}$, then we have 
\begin{align}\label{gamma}
\gamma(\mathbf{v},\sigma\tau)=\gamma( \mathbf{v}_{\sigma},\tau)\gamma(\mathbf{v},\sigma),
\end{align}
for all $\mathbf{v}\in V^{\otimes k}$ and $\sigma,\tau\in\mathfrak{S}_k$.

\begin{lemma}\label{eq::gammaprelation}
The functions $\gamma(\cdot,\cdot)$ and $p(\cdot,\cdot)$ satisfy the following equation: \begin{align}\label{eq::gammap}
\gamma(\mathbf{u}+\mathbf{v},\sigma)=p(\mathbf{u},\mathbf{v})p(\mathbf{u}_{\sigma},\mathbf{v}_{\sigma})\gamma(\mathbf{u},\sigma)\gamma(\mathbf{v},\sigma),
\end{align}
for all simple tensors $\mathbf{u}=u_1\otimes u_2\otimes \cdots \otimes u_k,~\mathbf{v}=v_1\otimes v_2\otimes \cdots \otimes v_k\in V^{\otimes k}$ and $\sigma\in\mathfrak{S}_k$.
\end{lemma}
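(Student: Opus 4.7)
The plan is to expand the exponent on the left-hand side, compare the resulting cross terms with the reindexed form of $p(\mathbf{u},\mathbf{v})p(\mathbf{u}_\sigma,\mathbf{v}_\sigma)$, and verify that the two products agree term-by-term. First I would use the bilinearity of multiplication in $\mathbb{Z}_2$ to write
$$(u_{\sigma(i)}+v_{\sigma(i)})(u_{\sigma(j)}+v_{\sigma(j)}) = u_{\sigma(i)}u_{\sigma(j)} + v_{\sigma(i)}v_{\sigma(j)} + \bigl(u_{\sigma(i)}v_{\sigma(j)} + v_{\sigma(i)}u_{\sigma(j)}\bigr).$$
Taking the product over inversions $i<j$, $\sigma(i)>\sigma(j)$, the first two terms contribute exactly $\gamma(\mathbf{u},\sigma)\gamma(\mathbf{v},\sigma)$, so it only remains to identify the cross-term factor
$$A := \prod_{\substack{i<j\\ \sigma(i)>\sigma(j)}} (-1)^{u_{\sigma(i)}v_{\sigma(j)}+v_{\sigma(i)}u_{\sigma(j)}}$$
with $p(\mathbf{u},\mathbf{v})p(\mathbf{u}_\sigma,\mathbf{v}_\sigma)$.

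Next I would reindex $A$ via $a=\sigma(i)$, $b=\sigma(j)$, converting the inversion condition into $a>b$ together with $\sigma^{-1}(a)<\sigma^{-1}(b)$, so that
$$A = \prod_{\substack{a>b\\ \sigma^{-1}(a)<\sigma^{-1}(b)}} (-1)^{u_av_b + u_bv_a}.$$
On the other side, since the $i$-th parity component of $\mathbf{u}_\sigma=\sigma^{-1}\cdot\mathbf{u}$ is $u_{\sigma(i)}$, the same change of variables turns $p(\mathbf{u}_\sigma,\mathbf{v}_\sigma)=\prod_{i>j}(-1)^{u_{\sigma(i)}v_{\sigma(j)}}$ into a product over pairs with $\sigma^{-1}(a)>\sigma^{-1}(b)$; splitting that product according to whether $a>b$ or $a<b$, and relabeling $a\leftrightarrow b$ in the latter half, gives
$$p(\mathbf{u}_\sigma,\mathbf{v}_\sigma) = \prod_{\substack{a>b\\ \sigma^{-1}(a)>\sigma^{-1}(b)}} (-1)^{u_av_b}\,\cdot\,\prod_{\substack{a>b\\ \sigma^{-1}(a)<\sigma^{-1}(b)}} (-1)^{u_bv_a}.$$

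Finally I would split $p(\mathbf{u},\mathbf{v})=\prod_{a>b}(-1)^{u_av_b}$ along the same dichotomy, multiply the two expressions together, and observe that the factors over non-inversions of $\sigma^{-1}$ pair up into $(-1)^{2u_av_b}=1$, leaving precisely
$$p(\mathbf{u},\mathbf{v})p(\mathbf{u}_\sigma,\mathbf{v}_\sigma) = \prod_{\substack{a>b\\ \sigma^{-1}(a)<\sigma^{-1}(b)}} (-1)^{u_av_b+u_bv_a} = A.$$
Combined with the diagonal contribution $\gamma(\mathbf{u},\sigma)\gamma(\mathbf{v},\sigma)$, this yields the claimed identity. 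The only real obstacle is keeping the bookkeeping straight between the inversion set of $\sigma$ (which indexes $\gamma$) and that of $\sigma^{-1}$ (which appears after the substitution $a=\sigma(i)$), and making sure the swap $a\leftrightarrow b$ in one of the half-products of $p(\mathbf{u}_\sigma,\mathbf{v}_\sigma)$ is executed in a way compatible with the partition used for $p(\mathbf{u},\mathbf{v})$; everything else is mechanical once these index sets are aligned.
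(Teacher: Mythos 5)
Your proof is correct, and it takes a genuinely different route from the paper's. The paper's proof introduces an auxiliary free commutative superalgebra on generators $a_1,\dots,a_k,b_1,\dots,b_k$ whose parities match $|\mathbf{u}|$ and $|\mathbf{v}|$, and reads off the identity by comparing two ways of commuting the monomial $a_{\sigma(1)}b_{\sigma(1)}\cdots a_{\sigma(k)}b_{\sigma(k)}$ into $a_1\cdots a_k\,b_1\cdots b_k$: once interleaved (producing $\gamma(\mathbf{u}+\mathbf{v},\sigma)$ and $p(\mathbf{u},\mathbf{v})$) and once deinterleaved first (producing $p(\mathbf{u}_\sigma,\mathbf{v}_\sigma)$, $\gamma(\mathbf{u},\sigma)$, $\gamma(\mathbf{v},\sigma)$). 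That trick outsources all sign bookkeeping to the associativity and supercommutativity of the auxiliary algebra. You instead expand the defining product for $\gamma(\mathbf{u}+\mathbf{v},\sigma)$ directly, peel off $\gamma(\mathbf{u},\sigma)\gamma(\mathbf{v},\sigma)$, and then show by reindexing with $a=\sigma(i)$, $b=\sigma(j)$ that the surviving cross factor is a product over pairs $a>b$ that are inversions of $\sigma^{-1}$, and that $p(\mathbf{u},\mathbf{v})p(\mathbf{u}_\sigma,\mathbf{v}_\sigma)$, after splitting each $p$ along the same dichotomy and relabeling, reduces to exactly that product. Your argument is more elementary in the sense that it needs no auxiliary construction, and it makes the role of the inversion sets of $\sigma$ and $\sigma^{-1}$ completely transparent; the paper's argument is shorter and less error-prone precisely because it hides all that index manipulation inside the superalgebra. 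Both are valid proofs of the same combinatorial identity. The only thing to be careful about (and you handle it correctly) is that the parity sequence of $\mathbf{u}_\sigma=\sigma^{-1}\cdot\mathbf{u}$ has $i$-th component $|u_{\sigma(i)}|$, not $|u_{\sigma^{-1}(i)}|$, given the paper's action convention in equation \eqref{sigma}.
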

\begin{proof}
Let $A$ be a free commutative superalgebra generated by homogeneous elements $a_1,\cdots,a_k$ and $b_1,\cdots,b_k$ with parities $|a_i|=|u_i|$ and $|b_j|=|v_j|$ for $i,j=1,\cdots,k$. Then, we have the following equations: 
\begin{align}\label{asigmabsigma}
 a_{\sigma(1)}\cdots a_{\sigma(k)}=&\gamma(\mathbf{u},\sigma) a_1a_2\cdots a_k,\\
b_{\sigma(1)}\cdots b_{\sigma(k)}=&\gamma(\mathbf{v},\sigma) b_1b_2\cdots b_k,\\
 a_{\sigma(1)}b_{\sigma(1)}\cdots a_{\sigma(k)}b_{\sigma(k)}=&\gamma(\mathbf{u}+\mathbf{v},\sigma) a_1b_1a_2b_2\cdots a_kb_k,
\end{align}
for any $\sigma\in \mathfrak{S}_k$.
On the other hand,
\begin{align}
a_1b_1a_2b_2\cdots a_kb_k&=p(\mathbf{u},\mathbf{v})a_1a_2\cdots a_k b_1b_2\cdots b_k,\\
a_{\sigma(1)}b_{\sigma(1)}\cdots a_{\sigma(k)}b_{\sigma(k)}&=p(\mathbf{u}_{\sigma},\mathbf{v}_{\sigma})a_{\sigma(1)}\cdots a_{\sigma(k)}b_{\sigma(1)}\cdots b_{\sigma(k)}.\label{absigma}
\end{align}
The lemma follows from \eqref{asigmabsigma} to \eqref{absigma}.
\end{proof}
In particular,
\begin{align}\label{psigma}
p(\mathbf{v},\mathbf{v})=p(\mathbf{v}_{\sigma},\mathbf{v}_{\sigma}),
\end{align}
for all $\mathbf{v}\in V^{\otimes k}$ and $\sigma\in\mathfrak{S}_k$ if we set $\mathbf{u}=\mathbf{v}$ in \eqref{eq::gammap}. 

It is obvious that $S(\mathfrak{g})$ is a $\mathbb{Z}_+$-graded algebra, meaning that, $S(\mathfrak{g})=\bigoplus\limits_{k=0}^{\infty}S^k(\mathfrak{g})$, where $S^k(\mathfrak{g})$ is the image of $\mathfrak{g}^{\otimes k }$ under the natural map $\eta\colon T(\mathfrak{g})\to S(\mathfrak{g})$. Moreover, $S(\mathfrak{g})$ is also a $\mathfrak{g}$-module by the adjoint action. In other words, for all elements  $a,x_1,x_2,\cdots,x_n\in\mathfrak{g}$, the action of $a$ on $x_1x_2\cdots x_n$ is defined as 
$$a\cdot x_1x_2\cdots x_n\colon =\mathop{\sum}\limits_{i=1}^n (-1)^{|a|(|x_1|+\cdots+|x_{i-1}|)} x_1\cdots x_{i-1}[a,x_i]x_{i+1}\cdots x_n.$$
Set $I_k=I\cap \mathfrak{g}^{\otimes k}$, where $I$ is the ideal of $T(\mathfrak{g})$ generated by $x\otimes y-(-1)^{|x||y|}y\otimes x$ for all $x,y \in\mathfrak{g}$. Let $\eta_k$ be the restriction of $\eta$ to $\mathfrak{g}^{\otimes k}$. Define a linear map $\omega_k:S^k(\mathfrak{g})\to \mathfrak{g}^{\otimes k}$ by
$$\omega_k(x_1x_2\cdots x_k)=\frac{1}{k!}\mathop{\sum}\limits_{\sigma\in\mathfrak{S}_k}\gamma(\mathbf{x},\sigma)x_{\sigma(1)}\otimes x_{\sigma(2)}\otimes\cdots\otimes x_{\sigma(k)},$$
for all homogeneous elements $x_1,x_2,\cdots,x_k\in \mathfrak{g}$.  It can be verified that $\eta_k\circ\omega_k$ is the identity on $S^k(\mathfrak{g})$, thus $S^k(\mathfrak{g})$ is a direct summand of $\mathfrak{g}^{\otimes k}$ and the following short exact sequence is split:
\begin{align}\label{split}
0\to I\to T(\mathfrak{g})\xrightarrow{\eta} S(\mathfrak{g}) \to 0.
\end{align}


Consider the following non-commutative diagram: 
\begin{align}\label{eq:TSUrelation}
\xymatrix{
T(\mathfrak{g})\ar[r]^{\eta}\ar[dr]_{\eta'}&S(\mathfrak{g})\ar@{->}[d]^{\psi} \\
&\mathrm{U}(\mathfrak{g}),
}
\end{align}
where $\eta'$ is the canonical map and
\begin{align}\label{psi}
\psi(x_1x_2\cdots x_n)=\frac{1}{n!}\mathop{\sum}\limits_{\sigma\in\mathfrak{S}_n}\gamma(\mathbf{x},\sigma)x_{\sigma(1)} x_{\sigma(2)}\cdots x_{\sigma(n)},
\end{align}
for all homogeneous elements $x_1,x_2,\cdots,x_n\in\mathfrak{g}$.
Note that these maps are $\mathrm{U}(\mathfrak{g})$-module homomorphisms, hence they map $\mathfrak{g}$-invariants to $\mathfrak{g}$-invariants. Since $\eta$ is split and $\psi$ is an isomorphism, the restrictions of $\eta$ on $T(\mathfrak{g})^{\mathfrak{g}}$ and $\psi$ on $S(\mathfrak{g})^{\mathfrak{g}}$ are both surjective. Therefore, a natural question arises:

{\bf Question:} Is the restriction of $\eta'$ on the $\mathfrak{g}$-invariants $T(\mathfrak{g})^{ \mathfrak{g} }$  surjective?
 
 If $\mathfrak{g}$ is a reductive Lie algebra over a field with characteristic zero, the restriction $\eta'$ is indeed surjective. The rest of this paper is devoted to providing a positive answer to this question when $\mathfrak{g}$ is a classical Lie superalgebra in the list \eqref{eq:glist}.

 Let $\{e_i\}$ be the standard homogeneous basis of $V$ and let $\{e_i^*\}$ be its dual basis for $V^*$. For a sequence $\mathbf{I}=(i_1,i_2,\cdots,i_k)$ of length $k$, or simply a $k$\textit{-sequence}, with $1\leqslant i_s\leqslant \mathrm{dim}V$ for all $s=1,2,\cdots,k$, we denote 
$$e_\mathbf{I}=e_{i_1}\otimes \cdots\otimes e_{i_k}\in V^{\otimes k}.$$
Thus, $e_{\mathbf{I}}$ with all $k$-sequences $\mathbf{I}$ forms a basis of $V^{\otimes k}$. 
For any two $k$-sequences $\mathbf{I}$ and  $\mathbf{J}$, let  $p(\mathbf{I},\mathbf{J})=p(|e_{\mathbf{I}}|,|e_{\mathbf{J}}|)$ and $p(\mathbf{I}+\mathbf{J},\mathbf{I})=p(\mathbf{I},\mathbf{I})p(\mathbf{J},\mathbf{I})$.
Define the map
$\Omega_k:\mathrm{End}(V^{\otimes k})\longrightarrow \mathrm{End}(V)^{\otimes k}$
by
\begin{align}\label{Omega}
\Omega_k(f)=\sum_{\mathbf{I},\mathbf{J}}p(\mathbf{I}+\mathbf{J},\mathbf{I})a_{i_1\cdots i_k}^{j_1\cdots j_k}e_{j_1i_1}\otimes\cdots\otimes e_{j_ki_k}.
\end{align} 
Here $e_{ij}\in \mathrm{End}(V)$ is a matrix unit, i.e., $e_{ij}(e_k)=\delta_{jk}e_i$,  $f(e_{\mathbf{I}})=\mathop{\sum}\limits_{\mathbf{J}}a_{i_1\cdots i_k}^{j_1\cdots j_k}e_{\mathbf{J}}$ and $a_{i_1\cdots i_k}^{j_1\cdots j_k}\in\mathbb{C}$. 
\begin{remark}
The map $\Omega_k$ is the composition of the following natural maps: 
$$\mathrm{End}(V^{\otimes k})\longrightarrow V^{\otimes k}\otimes (V^{\otimes k})^*\longrightarrow V^{\otimes k}\otimes (V^*)^{\otimes k}\longrightarrow (V\otimes V^*)^{\otimes k}\longrightarrow \mathrm{End}(V)^{\otimes k}.$$
\end{remark}

Note that $\mathrm{End}(V^{\otimes k})$ and $\mathrm{End}(V)^{\otimes k}$ are both $\mathrm{U}(\mathfrak{g})$-module superalgebras, and $\Omega$ is an isomorphism of $\mathrm{U}(\mathfrak{g})$-module superalgebras. Therefore, the  restriction of $\Omega_k$ to $\mathrm{End}_{\mathfrak{g}}(V^{\otimes k})$  is a superalgebra isomorphism from $\mathrm{End}_{\mathfrak{g}}(V^{\otimes k})$ to $\left[\mathrm{End}(V)^{\otimes k}\right]^{\mathfrak{g}}$.

Let $\pi$ be the restriction of $\bigoplus\limits_{k\geqslant0}\tilde{\pi}^{\otimes k}$ on $\bigoplus\limits_{k\geqslant0}\left[\mathrm{End}(V)^{\otimes k}\right]^{\mathfrak{g}}$. According to Lemma \ref{le:split}, we have the following diagram:
 $$
\xymatrix{\bigoplus\limits_{k\geqslant0}\mathcal{A}_k\ar@{>}[r]^(.37){\Psi} &\bigoplus\limits_{k\geqslant0}\mathrm{End}_{\mathfrak{g}}(V^{\otimes k})\ar@{->>}[r]^(.46){\Omega}&\bigoplus\limits_{k\geqslant0}\left[\mathrm{End}(V)^{\otimes k}\right]^{\mathfrak{g}}\ar@{->>}[r]^(.62){\pi}&
T(\mathfrak{g})^{\mathfrak{g}}\ar@{->>}[r]^{\eta}\ar[dr]^{\eta'}&S(\mathfrak{g})^{\mathfrak{g}}\ar@{->>}[d]^{\psi}\\
&&&&\mathcal{Z}(\mathfrak{g}),
}
$$
where $\Psi=\bigoplus\limits_{k\geqslant0}\Psi_k$,  $\Omega=\bigoplus\limits_{k\geqslant0}\Omega_k$,   and
\begin{equation}
\mathcal{A}_k=\begin{cases}
\mathbb{C}\mathfrak{S}_k  &\text{ if } \mathfrak{g}=\mathfrak{gl}_{m|n},\\
\mathcal{H}_k &\text{ if }\mathfrak{g}=\mathfrak{q}_{n},\\
\mathcal{B}_k(m-2n)  &\text{ if } \mathfrak{g}=\mathfrak{osp}_{m|2n},\\
\mathcal{B}^-_k(0)  &\text{ if } \mathfrak{g}=\mathfrak{p}_{n}.
\end{cases}
\end{equation}

It is important to emphasize that the map $\Psi$ is a surjective superalgebra homomorphism, except in the case of $\mathfrak{osp}_{2m|2n}$, as demonstrated by the super analogue of Schur-Weyl duality (see, Theorem \ref{thm:sw}). Additionally, $\Omega$ is an isomorphism of $\mathrm{U}(\mathfrak{g})$-module superalgebras. In the subsequent subsection, we will proceed with the calculation $\mathfrak{z}_{\sigma^{-1}}$ for any $k$-cycle $\sigma=(k,k-1\cdots 1)$.

Recall that the action of $\sigma^{-1}$ on $V^{\otimes k}$ in (\ref{sigma}) and the isomorphism $\Omega$ defined in \eqref{Omega}. Therefore, the invariant corresponding to $\sigma^{-1}$ is
\begin{align}\label{sign}
\theta_{\sigma^{-1}}=\mathop{\sum}\limits_{\mathbf{I}}p( \mathbf{I}+\mathbf{I}_{\sigma},\mathbf{I})\gamma(\mathbf{I},\sigma)(e_{i_{\sigma(1)}i_1}\otimes \cdots\otimes e_{i_{\sigma(k)}i_k}),
\end{align}
where  $\gamma(\mathbf{I},\sigma)=\gamma(e_{\mathbf{I}},\sigma)$ and $\mathbf{I}_{\sigma}=\sigma^{-1}\cdot e_{\mathbf{I}}$.  This is a $\mathfrak{g}$-invariant element of $\mathrm{End}(V)^{\otimes k}$ associated with $\sigma^{-1}$ for any $\sigma\in\mathfrak{S}_k$. In particular,
if $\mathfrak{g}=\mathfrak{gl}_{m|n}$, then these elements $\theta_{\sigma}$ with all permutations $\sigma$ span the superspace $\left[\mathrm{End}(V)^{\otimes k}\right]^{\mathfrak{g}}$. 

Denote by $\mathfrak{z}_{a}=\eta'\circ \pi \circ \Omega\circ \Psi(a)$, for all $a\in \bigoplus\limits_{k\geqslant0}\mathcal{A}_k$. 

\begin{example}\label{examplegelfand}
Suppose $\sigma=(12\cdots k)$ is a cycle, then \begin{align*}
&p(\mathbf{I},\mathbf{I})=\mathop{\prod}\limits_{j>l}(-1)^{|e_{i_j}||e_{i_l}|},\quad
p(\mathbf{I}_{\sigma^{-1}},\mathbf{I})=\mathop{\prod}\limits_{j\geqslant l,j\neq k}(-1)^{|e_{i_j}||e_{i_l}|},\text{ and }\\
&\gamma(\mathbf{I},\sigma^{-1})=(-1)^{|e_{i_k}|(|e_{i_1}|+\cdots+|e_{i_{k-1}}|)}.
\end{align*}
Hence, $p(\mathbf{I}+\mathbf{I}_{\sigma^{-1}},\mathbf{I})\gamma(\mathbf{I},\sigma^{-1})=(-1)^{|e_{i_1}|+\cdots+|e_{i_{k-1}}|}$ and
\begin{align*}
\theta_{\sigma}=\mathop{\sum}\limits_{\mathbf{I}}(-1)^{|e_{i_1}|+\cdots+|e_{i_{k-1}}|}(e_{i_{k}i_1}\otimes e_{i_1i_2}\otimes\cdots\otimes e_{i_{k-1}i_k}).
\end{align*}
In particular, if $\mathfrak{g}=\mathfrak{gl}_{m|n}$ is the general linear Lie superalgebra, then $$\mathfrak{z}_{\sigma}=\mathop{\sum}\limits_{\mathbf{I}}(-1)^{|e_{i_1}|+\cdots+|e_{i_{k-1}}|}E_{i_{k}i_1}E_{i_1i_2}\cdots E_{i_{k-1}i_k}$$ is the Gelfand invariant of $\mathfrak{gl}_{m|n}$.
\end{example}

\begin{theorem}\label{etaeta'}
Let $\mathfrak{g}$ be a classical Lie superalgebra as defined in \eqref{eq:glist} and let $\sigma\in\mathfrak{S}_k$. Then,
\begin{align*}
\psi\circ\eta\circ \pi(\theta_{\sigma})=\frac{1}{k!}\mathop{\sum}\limits_{\tau\in \mathfrak{S}_k}\eta'\circ \pi(\theta_{\tau^{-1}\sigma\tau})=\frac{1}{k!}\mathop{\sum}\limits_{\tau\in \mathfrak{S}_k}\mathfrak{z}_{\tau^{-1}\sigma\tau}.
\end{align*}
\end{theorem}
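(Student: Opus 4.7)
The plan is to express the supersymmetrization $\psi\circ\eta$ on $T^k(\mathfrak{g})$ through the natural signed action of $\mathfrak{S}_k$ given in \eqref{sigma}, and then reduce to a conjugation identity for the invariants $\theta_\sigma$. The second equality in the theorem is immediate from $\mathfrak{z}_a = \eta'\circ\pi\circ\Omega\circ\Psi(a)$ combined with the identification $\theta_\sigma = \Omega(\Psi_k(\sigma))$ already noted before \eqref{sign}, so the task reduces to proving the first equality.

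First, I would establish that for every $y\in T^k(\mathfrak{g})$,
$$\psi\circ\eta(y) = \frac{1}{k!}\sum_{\tau\in\mathfrak{S}_k}\eta'(\tau\cdot y).$$
For a simple tensor $y = x_1\otimes\cdots\otimes x_k$, this follows at once from the definition \eqref{psi} of $\psi$, the formula $\tau\cdot y = \gamma(\mathbf{x},\tau^{-1})(x_{\tau^{-1}(1)}\otimes\cdots\otimes x_{\tau^{-1}(k)})$ coming from \eqref{sigma}, and the bijective reindexing $\rho = \tau^{-1}$; the general case follows by linearity. Since $\tilde{\pi}$ preserves parity, the map $\pi = \tilde{\pi}^{\otimes k}$ is $\mathfrak{S}_k$-equivariant with respect to the signed permutation action, so specializing $y = \pi(\theta_\sigma)$ yields
$$\psi\circ\eta\circ\pi(\theta_\sigma) = \frac{1}{k!}\sum_{\tau\in\mathfrak{S}_k}\eta'\circ\pi(\tau\cdot\theta_\sigma).$$

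It then remains to prove the key identity $\tau\cdot\theta_\sigma = \theta_{\tau\sigma\tau^{-1}}$ in $[\mathrm{End}(V)^{\otimes k}]^{\mathfrak{g}}$; the theorem follows by the bijective relabeling $\tau\mapsto\tau^{-1}$ in the sum. I see two natural ways to verify the identity. The conceptual route uses $\theta_\sigma = \Omega(\Psi_k(\sigma))$ together with the multiplicativity $\Psi_k(\tau\sigma\tau^{-1}) = \Psi_k(\tau)\Psi_k(\sigma)\Psi_k(\tau)^{-1}$; since $\Omega$ is an isomorphism of $\mathrm{U}(\mathfrak{g})$-module superalgebras, the identity reduces to showing that conjugation by $\Omega(\Psi_k(\tau))$ inside $\mathrm{End}(V)^{\otimes k}$ implements the signed permutation of tensor factors, a fact one can check on simple tensors by tracking how $\Psi_k(\tau)$ permutes $V^{\otimes k}$. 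The direct route begins from \eqref{sign}, applies \eqref{sigma} with the parities $|Y_a| = |i_{\sigma^{-1}(a)}|+|i_a|$ of the tensor factors of $\theta_\sigma$, and performs the index substitution $\mathbf{J} = (i_{\tau^{-1}(1)},\ldots,i_{\tau^{-1}(k)})$, so that the tensor products match those of $\theta_{\tau\sigma\tau^{-1}}$; Lemma \ref{eq::gammaprelation} and the cocycle property \eqref{gamma} of $\gamma$ are then used to collapse the accumulated signs into $p(\mathbf{J}+\mathbf{J}_{\tau\sigma^{-1}\tau^{-1}},\mathbf{J})\gamma(\mathbf{J},\tau\sigma^{-1}\tau^{-1})$. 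The principal obstacle along either route is this sign bookkeeping, but the super-algebraic identities of Lemma \ref{eq::gammaprelation} are tailor-made for it.
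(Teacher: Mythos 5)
Your proposal is correct, and it is organized differently from the paper's proof. The paper proves the theorem by a single direct expansion: it writes out $\psi\circ\eta\circ\pi(\theta_{\sigma^{-1}})$ using \eqref{sign} and the definition \eqref{psi} of $\psi$, then massages the resulting signs in one pass via Lemma~\ref{eq::gammaprelation}, the cocycle identity \eqref{gamma}, and \eqref{psigma}, before reindexing $\mathbf{I}_\tau\mapsto\mathbf{I}$ to recognize the summand as $\mathfrak{z}_{\tau^{-1}\sigma^{-1}\tau}$. You instead factor the argument into three modular pieces: (a) the general identity $\psi\circ\eta(y)=\frac{1}{k!}\sum_\tau\eta'(\tau\cdot y)$ on $T^k(\mathfrak{g})$, which is a one-line consequence of \eqref{psi} and \eqref{sigma}; (b) $\mathfrak{S}_k$-equivariance of $\pi=\tilde{\pi}^{\otimes k}$, which is immediate because $\tilde{\pi}$ preserves parity; and (c) the conjugation identity $\tau\cdot\theta_\sigma=\theta_{\tau\sigma\tau^{-1}}$ in $[\mathrm{End}(V)^{\otimes k}]^{\mathfrak{g}}$. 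Step (c) is where the content lies, and your ``conceptual route'' is the real gain: since $\Omega$ is a superalgebra isomorphism and $\Psi_k$ a group homomorphism, $\theta_{\tau\sigma\tau^{-1}}=\Omega(\Psi_k(\tau))\,\theta_\sigma\,\Omega(\Psi_k(\tau))^{-1}$, and the only computation left is the standard check that conjugation by $\Omega(\Psi_k(\tau))$ realizes the signed permutation of tensor factors (it suffices to verify this for adjacent transpositions, since both sides are group actions). This avoids the bulk of the paper's sign bookkeeping and makes transparent \emph{why} the answer involves the conjugacy class of $\sigma$. Your ``direct route'' for (c) would reproduce essentially the same $\gamma$- and $p$-manipulations as the paper, just packaged inside the conjugation lemma rather than inline; both are fine, but the conceptual route is cleaner. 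The reduction of the second asserted equality to the identification $\theta_\sigma=\Omega(\Psi_k(\sigma))$ and $\mathfrak{z}_a=\eta'\circ\pi\circ\Omega\circ\Psi(a)$ is correct and agrees with the paper's implicit use of these definitions.
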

\begin{proof}
By direct calculation, we have
\begin{align*}
&\psi\circ\eta\circ \pi(\theta_{\sigma^{-1}})\\
=&\mathop{\sum}\limits_{\mathbf{I}}p(\mathbf{I}+\mathbf{I}_{\sigma},\mathbf{I})\gamma(\mathbf{I},\sigma)\frac{1}{k!}\mathop{\sum}\limits_{\tau\in \mathfrak{S}_k}\gamma(\mathbf{I}_{\sigma}+\mathbf{I},\tau)(\tilde{\pi}(e_{i_{\sigma(\tau(1))}i_{\tau(1)}}) \cdots \tilde{\pi}(e_{i_{\sigma(\tau(k))}i_{\tau(k)}}))\\
=&\frac{1}{k!}\mathop{\sum}\limits_{\tau\in \mathfrak{S}_k}\mathop{\sum}\limits_{\mathbf{I}}p(\mathbf{I}, \mathbf{I})p(\mathbf{I}_{\sigma\tau},\mathbf{I}_{\tau})
\gamma(\mathbf{I},\sigma)\gamma(\mathbf{I},\tau)\gamma(\mathbf{I}_{\sigma},\tau)\tilde{\pi}(e_{i_{\tau(\tau^{-1}\sigma\tau(1))}i_{\tau(1)}}) \cdots \tilde{\pi}(e_{i_{\tau(\tau^{-1}\sigma\tau(k))}i_{\tau(k)}})\\
=&\frac{1}{k!}\mathop{\sum}\limits_{\tau\in \mathfrak{S}_k}\mathop{\sum}\limits_{\mathbf{I}}p(\mathbf{I}_{\tau},\mathbf{I}_{\tau})p(\mathbf{I}_{\tau(\tau^{-1}\sigma\tau)}),\mathbf{I}_{\tau})\gamma(\mathbf{I}_{\tau},\tau^{-1}\sigma\tau)
\tilde{\pi}(e_{i_{\tau(\tau^{-1}\sigma\tau(1))}i_{\tau(1)}}) \cdots \tilde{\pi}(e_{i_{\tau(\tau^{-1}\sigma\tau(k))}i_{\tau(k)}})\\
=&\frac{1}{k!}\mathop{\sum}\limits_{\tau\in \mathfrak{S}_k}\mathop{\sum}\limits_{\mathbf{I}}p(\mathbf{I},\mathbf{I})p(\mathbf{I}_{\tau^{-1}\sigma\tau},\mathbf{I})\gamma(\mathbf{I},\tau^{-1}\sigma\tau)(\tilde{\pi}(e_{i_{\tau^{-1}\sigma\tau(1)}i_{1}}) \cdots \tilde{\pi}(e_{i_{\tau^{-1}\sigma\tau(k)}i_{k}}))\\
=&\frac{1}{k!}\mathop{\sum}\limits_{\tau\in \mathfrak{S}_k} \mathfrak{z}_{\tau^{-1}\sigma^{-1}\tau},
\end{align*}
where the validity of the second equality is determined by  \eqref{eq::gammap}, the third holds by \eqref{gamma} along with  \eqref{psigma}, and the fourth is established by replacing $\mathbf{I}_{\tau}$ by $\mathbf{I}$. 
\end{proof}

\begin{proposition}\label{sigmacycle}
Suppose that $\sigma\in \mathfrak{S}_k$ is a permutation, and $\sigma=\sigma_1\cdots\sigma_s$ is a product of disjoint cycles. Then $\eta\circ \pi(\theta_{\sigma})=\eta\circ \pi(\theta_{\sigma_1})\eta\circ \pi(\theta_{\sigma_2})\cdots \eta\circ \pi(\theta_{\sigma_s})$.
\end{proposition}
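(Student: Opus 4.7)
The strategy is to exploit the supercommutativity of $S(\mathfrak{g})$: inside $\eta\circ\pi(\theta_{\sigma})$ the $k$-fold product
\[
\prod_{j=1}^{k}\tilde\pi(e_{i_{\sigma^{-1}(j)}i_{j}})
\]
may be rearranged freely at the cost of a $\gamma$-sign. I would choose a permutation $\tau\in\mathfrak{S}_{k}$ that sends consecutive initial blocks of $\{1,\dots,k\}$, of sizes $\ell_{1},\dots,\ell_{s}$, bijectively onto the cyclic supports $A_{1},\dots,A_{s}$ of $\sigma_{1},\dots,\sigma_{s}$, respecting the natural cyclic order on each $A_{r}$. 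Because the supports are pairwise disjoint, the restriction of $\sigma$ to $A_{r}$ is exactly $\sigma_{r}$, so after rewriting by $\tau$ the product splits as $\prod_{r=1}^{s}\bigl(\prod_{j\in A_{r}}\tilde\pi(e_{i_{\sigma_{r}^{-1}(j)}i_{j}})\bigr)$. At the same time the sum $\sum_{\mathbf{I}}$ factors as $\prod_{r}\sum_{\mathbf{J}^{(r)}}$ with $\mathbf{J}^{(r)}=(i_{a})_{a\in A_{r}}$, since no variable $i_{a}$ with $a\in A_{r}$ appears outside the $r$-th sub-product.

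In carrying out this plan I would first apply the identity $v_{1}\cdots v_{k}=\gamma(\mathbf{v},\tau)v_{\tau(1)}\cdots v_{\tau(k)}$ (valid in $S(\mathfrak{g})$) to the above product, contributing the extra sign $\gamma(\mathbf{I}+\mathbf{I}_{\sigma^{-1}},\tau)$. A short parity check shows that each inner sub-sum $\sum_{\mathbf{J}^{(r)}}\cdots$ is even in $S(\mathfrak{g})$: the parity of $\tilde\pi(e_{i_{\sigma_{r}^{-1}(j)}i_{j}})$ is $|i_{\sigma_{r}^{-1}(j)}|+|i_{j}|$, so the $r$-th sub-product has parity $\sum_{j\in A_{r}}(|i_{\sigma_{r}^{-1}(j)}|+|i_{j}|)=0$. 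Consequently the $s$ sub-sums commute without signs in $S(\mathfrak{g})$, and once the coefficient signs match, the expression becomes the desired product $\prod_{r}\eta\circ\pi(\theta_{\sigma_{r}})$.

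The main obstacle is precisely the sign bookkeeping. The coefficient attached to $\mathbf{I}$ in $\eta\circ\pi(\theta_{\sigma})$ is $p(\mathbf{I}+\mathbf{I}_{\sigma^{-1}},\mathbf{I})\gamma(\mathbf{I},\sigma^{-1})$, to which the reordering contributes the factor $\gamma(\mathbf{I}+\mathbf{I}_{\sigma^{-1}},\tau)$, and I need to show the product of these equals $\prod_{r=1}^{s}p(\mathbf{J}^{(r)}+\mathbf{J}^{(r)}_{\sigma_{r}^{-1}},\mathbf{J}^{(r)})\gamma(\mathbf{J}^{(r)},\sigma_{r}^{-1})$. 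I plan to verify this by repeated use of the multiplicativity identities $p(\mathbf{x}+\mathbf{y},\mathbf{z})=p(\mathbf{x},\mathbf{z})p(\mathbf{y},\mathbf{z})$ and $p(\mathbf{x},\mathbf{y}+\mathbf{z})=p(\mathbf{x},\mathbf{y})p(\mathbf{x},\mathbf{z})$, together with the cocycle identity \eqref{gamma} for $\gamma$. By induction on the number $s$ of cycles it suffices to treat $s=2$, where a direct inversion count shows that the cross-block contributions to $\gamma(\mathbf{I},\sigma^{-1})$ and $\gamma(\mathbf{I}+\mathbf{I}_{\sigma^{-1}},\tau)$ combine with the cross-block part of $p(\mathbf{I}+\mathbf{I}_{\sigma^{-1}},\mathbf{I})$ into a product that cancels by $p$-multiplicativity, leaving only the two intra-block signs claimed.
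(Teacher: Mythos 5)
Your strategy — reorder by a block permutation $\tau$, factor the sum, and verify the residual sign identity — is sound and would yield the proposition, and it is a genuinely different route from the paper's. But as written it stops at exactly the hard part: you reduce everything to the sign identity
\[
p(\mathbf{I}+\mathbf{I}_{\sigma^{-1}},\mathbf{I})\,\gamma(\mathbf{I},\sigma^{-1})\,\gamma(\mathbf{I}+\mathbf{I}_{\sigma^{-1}},\tau)
=\prod_{r=1}^{s}p\bigl(\mathbf{J}^{(r)}+\mathbf{J}^{(r)}_{\sigma_{r}^{-1}},\mathbf{J}^{(r)}\bigr)\,\gamma\bigl(\mathbf{J}^{(r)},\sigma_{r}^{-1}\bigr),
\]
and then defer it to ``a direct inversion count'' after induction to $s=2$. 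That count is where all the work lives: one has to expand $p(\mathbf{I}+\mathbf{I}_{\sigma^{-1}},\mathbf{I})$ and $\gamma(\mathbf{I}+\mathbf{I}_{\sigma^{-1}},\tau)$ via Lemma~\ref{eq::gammaprelation} and the cocycle relation~\eqref{gamma}, split $\gamma(\mathbf{I},\sigma^{-1})$ and $\gamma(\mathbf{I},\tau)$ into intra-block and cross-block inversions, and then match the leftover cross-block $p$-factors. It almost certainly closes (the proposition is true), but it is a delicate bookkeeping exercise, and you also have to fix carefully which ordering of the support $A_r$ you use to evaluate the right-hand side (the paper's $\gamma(\mathbf{I}_r,\tau_r)$ is computed relative to the cycle-notation order $j_{r,1},\dots,j_{r,t_r}$, so ``respecting the cyclic order'' needs to be made precise to match).

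The paper avoids all of this with one device. It maps each monomial into a free commutative superalgebra $A$ generated by symbols $a_1,\dots,a_k,b_1,\dots,b_k$ with $|a_p|=|b_p|=|e_{i_p}|$, via $\tilde\pi(e_{ij})\mapsto a_ib_j$. In $A$ the sign $p(\mathbf{I}+\mathbf{I}_{\sigma^{-1}},\mathbf{I})\gamma(\mathbf{I},\sigma^{-1})$ is, by construction, exactly the cost of reordering $a_{\sigma^{-1}(1)}b_1\cdots a_{\sigma^{-1}(k)}b_k$ back to $a_1b_1\cdots a_kb_k$; this sorted product then factors over the cycle supports with no further sign because each $a_jb_j$ is even (which is the same parity observation you make). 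Applying the same ``cost of reordering'' identity backwards inside each block produces $\prod_r\eta\circ\pi(\theta_{\sigma_r})$ on the nose. In effect, the auxiliary algebra $A$ carries out your ``inversion count'' automatically, in a coordinate-free way that also makes the induction on $s$ and the choice of $\tau$ unnecessary. I'd suggest either carrying out your sign computation in full, or adopting the paper's split-variable trick, which delivers precisely the identity you are after with no case analysis.
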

\begin{proof}
Let $\tau=\sigma^{-1}$ and $\tau_i=\sigma_i^{-1}$ for $i=1,\cdots,s$, then $\tau=\tau_1\cdots \tau_s$ is a product of disjoint cycles. By \eqref{sign}, $$\eta\circ \pi(\theta_{\tau^{-1}})=\mathop{\sum}\limits_{\mathbf{I}}p(\mathbf{I}+\mathbf{I}_{\tau},\mathbf{I})\gamma(\mathbf{I},\tau) \tilde{\pi}(e_{i_{\tau(1)},i_1})\tilde{\pi}(e_{i_{\tau(2)},i_2})\cdots \tilde{\pi}(e_{i_{\tau(k)},i_k})
\in S(\mathfrak{g}).$$
Suppose that $\tau_r=(j_{r,1}j_{r,2}\cdots j_{r,t_r})$ and let $\mathbf{I}_r=(|i_{j_{r_1}}|,|i_{j_{r,2}}|,\cdots,|i_{j_{r,t_r}}|)$. Then $\tau_r$ can be regarded  as a permutation in $\{j_{r,1},\cdots,j_{r,t_r}\}$ and $\eta\circ \pi(\theta_{\tau_r^{-1}})$ is equal to 
$$\mathop{\sum}\limits_{\mathbf{I}_{r}}p(\mathbf{I}_r+\tau_r^{-1}\cdot \mathbf{I}_r,\mathbf{I}_r)\gamma(\mathbf{I}_r,\tau_r) \tilde{\pi}(e_{i_{\tau_r(j_{r,1})},i_{j_{r,1}}})\tilde{\pi}(e_{i_{\tau_r(j_{r,2})},i_{j_{r,2}}})\cdots \tilde{\pi}(e_{i_{\tau_r(j_{r,t_r})},i_{j_{r,t_r}}}). $$

Let $A$ be a free commutative superalgebra generated by $a_1,\cdots,a_k$ and $b_1,\cdots,b_k$ with $|a_p|=|b_p|=|e_{i_p}|$  for all $p=1,\cdots,k$. Then, 
\begin{align*}
a_1b_1\cdots a_kb_k=&p(\mathbf{I}+\mathbf{I}_{\tau},\mathbf{I})\gamma(\mathbf{I},\tau)a_{\tau(1)}b_1\cdots a_{\tau(k)}b_k,\\
a_{j_{r,1}}b_{j_{r,1}}\cdots a_{j_{r,t_r}}b_{j_{r,t_r}}
=&p(\mathbf{I}_r+\tau_r^{-1}\cdot \mathbf{I}_r,\mathbf{I}_r)\gamma(\mathbf{I}_r,\tau_r) a_{\tau_r(j_{r,1})}b_{j_{r,1}}\cdots a_{\tau(j_{r,t_r})}b_{j_{r,t_r}},
\end{align*}
for all $r=1,\cdots, s$.
Note that $a_1b_1\cdots a_kb_k=\mathop{\prod}\limits_{r=1}^s a_{j_{r,1}}b_{j_{r,1}}\cdots a_{j_{r,t_r}}b_{j_{r,t_r}}$ and $S(\mathfrak{g})$ can be viewed as a subalgebra of $A$ by the inclusion $\mathfrak{j}\colon \tilde{\pi}(e_{ij})\mapsto a_ib_j$. Therefore, we obtain
\begin{align*}
\mathfrak{j}(\eta\circ\pi (\theta_{\sigma}))&=\mathop{\sum}\limits_{\mathbf{I}}p(\mathbf{I}+\mathbf{I}_{\tau},\mathbf{I})\gamma(\mathbf{I},\tau)a_{\tau(1)}b_1\cdots a_{\tau(k)}b_k\\
&=\mathop{\sum}\limits_{\mathbf{I}}a_1b_1\cdots a_kb_k=\mathop{\prod}\limits_{r=1}^s\mathop{\sum}\limits_{\mathbf{I}_r} a_{j_{r,1}}b_{j_{r,1}}\cdots a_{j_{r,t_r}}b_{j_{r,t_r}}\\
&=\mathop{\prod}\limits_{r=1}^s \mathop{\sum}\limits_{\mathbf{I}_r}p(\mathbf{I}_r+\tau_r^{-1}\cdot \mathbf{I}_r,\mathbf{I}_r)\gamma(\mathbf{I}_r,\tau_r) a_{\tau_r(j_{r,1})}b_{j_{r,1}}\cdots a_{\tau(j_{r,t_r})}b_{j_{r,t_r}}\\
&=\mathop{\prod}\limits_{r=1}^s \mathfrak{j}\eta\circ\pi(\theta_{\tau_r^{-1}}))=\mathfrak{j}\left(\eta\circ\pi(\theta_{\sigma_1})\eta\circ\pi(\theta_{\sigma_2})\cdots \eta\circ\pi(\theta_{\sigma_s})\right).
\end{align*}
This completes the proof since $\mathfrak{j}$ is an inclusion.
\end{proof}
\begin{corollary}
The elements $\eta\circ\pi(\theta_{(12\cdots k)}) $ with all $ k\geqslant 1 $ generate the algebra $S(\mathfrak{gl}_{m|n})^{\mathfrak{gl}_{m|n}}$.
\end{corollary}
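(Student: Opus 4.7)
The plan is to combine the three surjections along the top row of the main diagram ($\Omega\circ\Psi$, $\pi$, and the restriction of $\eta$) to exhibit $S(\mathfrak{gl}_{m|n})^{\mathfrak{gl}_{m|n}}$ as the linear span of the elements $\eta\circ\pi(\theta_\sigma)$ with $\sigma\in\mathfrak{S}_k$, and then to collapse this spanning set onto the standard $k$-cycles $(1\,2\,\cdots\,k)$ using Proposition~\ref{sigmacycle} together with the conjugation invariance that Theorem~\ref{etaeta'} forces on $\eta\circ\pi(\theta_\sigma)$.

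First, I would verify the linear spanning property. For $\mathfrak{g}=\mathfrak{gl}_{m|n}=\mathrm{End}(V)$ the map $\tilde\pi$ is the identity, so $\pi$ is surjective onto $T(\mathfrak{gl}_{m|n})^{\mathfrak{gl}_{m|n}}$; by the Schur--Sergeev duality (Theorem~\ref{thm:sw}) the map $\Psi_k$ is surjective, and since $\Omega_k$ is an isomorphism of $\mathfrak{g}$-module superalgebras, the elements $\theta_\sigma$ with $\sigma\in\mathfrak{S}_k$ span $[\mathrm{End}(V)^{\otimes k}]^{\mathfrak{gl}_{m|n}}$, as observed in the paragraph following \eqref{sign}. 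Moreover, $\eta$ is split by the $\mathfrak{g}$-equivariant supersymmetrization $\omega_k$ of \eqref{split}, so its restriction to $T(\mathfrak{gl}_{m|n})^{\mathfrak{gl}_{m|n}}$ surjects onto $S(\mathfrak{gl}_{m|n})^{\mathfrak{gl}_{m|n}}$. Composing these three surjections shows that every element of $S(\mathfrak{gl}_{m|n})^{\mathfrak{gl}_{m|n}}$ is a linear combination of the elements $\eta\circ\pi(\theta_\sigma)$ with $\sigma$ ranging over $\bigcup_{k\geqslant 0}\mathfrak{S}_k$.

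Next, I would reduce from arbitrary permutations to single cycles. If $\sigma=\sigma_1\sigma_2\cdots\sigma_s$ is a decomposition into disjoint cycles, then Proposition~\ref{sigmacycle} yields
\[
\eta\circ\pi(\theta_\sigma)=\prod_{j=1}^{s}\eta\circ\pi(\theta_{\sigma_j}),
\]
so the linear span of the family $\{\eta\circ\pi(\theta_\sigma)\}$ is contained in the subalgebra generated by those $\eta\circ\pi(\theta_\sigma)$ with $\sigma$ a single cycle. It therefore suffices to treat cycles.

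Finally, I would collapse individual cycles onto standard cycles. Theorem~\ref{etaeta'} gives $\psi\circ\eta\circ\pi(\theta_\sigma)=\frac{1}{k!}\sum_{\tau\in\mathfrak{S}_k}\mathfrak{z}_{\tau^{-1}\sigma\tau}$, whose right-hand side is (up to the scalar $1/|C_\sigma|$) the sum over the conjugacy class of $\sigma$, and hence depends only on the cycle type of $\sigma$. Since $\psi:S(\mathfrak{gl}_{m|n})\to\mathrm{U}(\mathfrak{gl}_{m|n})$ is bijective and restricts to a bijection on the invariant subspaces, the element $\eta\circ\pi(\theta_\sigma)\in S(\mathfrak{gl}_{m|n})^{\mathfrak{gl}_{m|n}}$ itself depends only on the conjugacy class of $\sigma$. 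Any $k$-cycle is conjugate to $(1\,2\,\cdots\,k)$, so $\eta\circ\pi(\theta_\sigma)=\eta\circ\pi(\theta_{(1\,2\,\cdots\,k)})$ for every $k$-cycle $\sigma$, and combining this with the cycle reduction yields the desired generation statement. The one potentially delicate step is this transport of conjugation invariance from the image in $\mathcal{Z}(\mathfrak{gl}_{m|n})$ back to $S(\mathfrak{gl}_{m|n})^{\mathfrak{gl}_{m|n}}$, which is handled cleanly by the bijectivity of $\psi$.
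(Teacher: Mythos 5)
Your proposal is correct. The paper states the Corollary without proof, immediately after Proposition~\ref{sigmacycle}, so the intended argument is evidently (i) span by the $\eta\circ\pi(\theta_\sigma)$ (surjectivity of $\Psi_k$, isomorphism $\Omega_k$, identity $\tilde\pi$ for $\mathfrak{gl}_{m|n}$, and splitness of $\eta$), (ii) reduction to single cycles via Proposition~\ref{sigmacycle}, and (iii) the observation that all cycles of a fixed length give the same element of $S(\mathfrak{gl}_{m|n})^{\mathfrak{gl}_{m|n}}$. You make steps (i) and (ii) explicit in the same way the paper does.

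Where you differ is in step (iii). The paper leaves the conjugation invariance $\eta\circ\pi(\theta_\sigma)=\eta\circ\pi(\theta_{(12\cdots k)})$ for an arbitrary $k$-cycle $\sigma$ implicit; the natural direct argument is a change of summation index $\mathbf{I}\mapsto\mathbf{I}_\tau$ in the formula for $\eta\circ\pi(\theta_\sigma)$ together with the supercommutativity of $S(\mathfrak{g})$, which is precisely the mechanism already at work in the proof of Proposition~\ref{sigmacycle}. You instead deduce it from Theorem~\ref{etaeta'}: the right-hand side $\frac{1}{k!}\sum_{\tau}\mathfrak{z}_{\tau^{-1}\sigma\tau}$ is $\frac{1}{|C_\sigma|}\sum_{\sigma'\in C_\sigma}\mathfrak{z}_{\sigma'}$, which manifestly depends only on the conjugacy class, and then you pull this equality back through the bijection $\psi$. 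Both routes are sound; yours is slightly less direct but buys you a clean, computation-free justification of the class invariance, relying only on results already proved rather than repeating a relabeling computation. The one spot to be careful about, which you flag correctly, is that injectivity of $\psi$ is what licenses transporting the equality from $\mathrm{U}(\mathfrak{gl}_{m|n})$ back to $S(\mathfrak{gl}_{m|n})^{\mathfrak{gl}_{m|n}}$; since $\psi$ is a $\mathfrak{g}$-module isomorphism this is immediate.
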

\subsection{Explicit invariants for $\mathfrak{gl}_{m|n}$}\label{subsectiongl}

By \eqref{split} and \eqref{psi}, we know that $\eta\colon T(\mathfrak{gl}_{m|n})\to S(\mathfrak{gl}_{m|n})$  is a split $\mathfrak{gl}_{m|n}$-module homomorphism, and $\psi \colon S(\mathfrak{gl}_{m|n})\to \mathrm{U}(\mathfrak{gl}_{m|n})$ is the $\mathfrak{gl}_{m|n}$-module isomorphism. Hence, the set $\big\{\psi\circ\eta \circ\pi(\theta_{\sigma^{-1}})\big|\sigma\in \mathfrak{S}_k,k\in \mathbb{Z}_+\big\}$ spans $\mathcal{Z}(\mathfrak{gl}_{m|n})$. Consequently, $\mathcal{Z}(\mathfrak{gl}_{m|n})$ can be spanned by  $\big\{\mathfrak{z}_{\sigma}\big|\sigma\in \mathfrak{S}_k,k\in \mathbb{Z}_+\big\}$ according to Theorem \ref{etaeta'}. 

Molev systematically investigated the Gelfand invariants for classical Lie algebras $\mathfrak{gl}_{n}, \mathfrak{o}_{n}$ and $\mathfrak{sp}_{2n}$ and obtained the generators for the center of the universal enveloping algebras, as documented in \cite[Chapter 4, 5]{M18}. The related techniques can also be extended to classical Lie superalgebras $\mathfrak{gl}_{m|n}$ and $\mathfrak{osp}_{m|2n}$. More specifically, 
the basis element $E_{ij}$ of $\mathfrak{gl}_{m|n}$ can be regarded as generators of the
universal enveloping algebra $\mathrm{U}(\mathfrak{gl}_{m|n})$ with the following relation:
\begin{equation}\label{glrelation}
E_{ij}E_{kl}-(-1)^{(|i|+|j|)(|k|+|l|)}E_{kl}E_{ij}=\delta_{jk}E_{il}-(-1)^{(|i|+|j|)(|k|+|l|)}\delta_{li}E_{kj}.
\end{equation}

Let $\widehat{E}=\left[(-1)^{|i|}E_{ij}\right]= \mathop{\sum}\limits_{i,j=1}^{m+n} (-1)^{|i||j|+|i|+|j|}e_{ij}\otimes E_{ij}\in \mathrm{End}(V)\otimes \mathrm{U}(\mathfrak{gl}_{m|n})$,where $e_{ij}\in \mathrm{End}(V)$ denotes the standard matrix unit. The supertranspose of $\mathrm{End}(V)$, defined by $e_{ij}^{\mathrm{st}}=(-1)^{(|i|+|j|)|i|}e_{ji}$, is an anti-automorphism, and it can be extend an anti-automorphism of $\mathrm{End}(V)^{\otimes k}$, which we denote by $S^{\mathrm{st}}$, where $S\in \mathrm{End}(V)^{\otimes k}$. We denote $E^{\mathrm{st}}$ as the supertranspose of $E$, that is,  
\begin{align*}
\widehat{E}^{\mathrm{st}}=\mathop{\sum}\limits_{i,j=1}^{m+n} (-1)^{|i||j|+|i|+|j|}e_{ij}^{\mathrm{st}}\otimes E_{ij}=\mathop{\sum}\limits_{i,j=1}^{m+n} (-1)^{|j|}e_{ji}\otimes E_{ij}.
\end{align*}

The relation \eqref{glrelation} can be written in the form
\begin{align}\label{eq:glpresent}
\widehat{E}_1\widehat{E}_2-\widehat{E}_2\widehat{E}_1=P\widehat{E}_2-\widehat{E}_2P, 
\end{align}
where $P=\mathop{\sum}\limits_{i,j=1}^{m+n}(-1)^{|j|}e_{ij}\otimes e_{ji}\in \mathrm{End}(V)^{\otimes 2}$ is known as the super transposition. The classical Gelfand invariants of the general linear Lie superalgebra are $\mathrm{Str}\widehat{E}^{k}$ for $k\geqslant 1$, as mentioned in \cite[Chapter 4]{M18} for the case of a simple Lie algebra.
Suppose that
\begin{align*}
S=\mathop{\sum}\limits_{\mathbf{I},\mathbf{J}}a_{j_1,\cdots,j_k}^{i_1,\cdots,i_k}e_{i_1,j_1}\otimes \cdots\otimes e_{i_k,j_k}\in[\mathrm{End}(V)^{\otimes k}]^{\mathfrak{gl}_{m|n}},
\end{align*}
then 
\begin{align*}
\eta'\circ \pi(S)=\mathop{\sum}\limits_{\mathbf{I},\mathbf{J}}a_{j_1,\cdots,j_k}^{i_1,\cdots,i_k}E_{i_1,j_1} \cdots E_{i_k,j_k}=\mathrm{Str}_{1,\cdots,k}(\widehat{E}_1^{\mathrm{st}}\cdots \widehat{E}_k^{\mathrm{st}}S)\in \mathcal{Z}(\mathfrak{gl}_{m|n}).
\end{align*}
Thus, 
\begin{align}\label{strgl}
\eta'\circ \pi(S^{\mathrm{st}})=&\mathop{\sum}\limits_{\mathbf{I},\mathbf{J}}a_{j_1,\cdots,j_k}^{i_1,\cdots,i_k}(-1)^{(|i_1|+|j_1|)|i_1|+\cdots+(|i_k|+|j_k|)|i_k| } E_{j_1,i_1} \cdots E_{j_k,i_k}\nonumber\\ 
=&\mathrm{Str}_{1,\cdots,k}(\widehat{E}_1^{\mathrm{st}}\cdots \widehat{E}_k^{\mathrm{st}}S^{\mathrm{st}})=\mathrm{Str}_{1,\cdots,k}(\widehat{E}_1\cdots \widehat{E}_kS) \in\mathcal{Z}(\mathfrak{gl}_{m|n}).
\end{align}

By a similar proof of \cite[Theorem 4.5.1]{M18}, we obtain the following theorem.
\begin{theorem}\label{Gelfandgl}
For any $\mathfrak{gl}_{m|n}$-invariant $S\in \mathrm{End}(V)^{\otimes k}$ and $u_1,\cdots,u_k\in \mathbb{C}$, then the element
\begin{align*}
\mathrm{Str}_{1,\cdots,k}(u_1+\widehat{E}_1)\cdots (u_k+\widehat{E}_k)S
\end{align*}
belongs to $\mathcal{Z}(\mathfrak{gl}_{m|n})$.
\end{theorem}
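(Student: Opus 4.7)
The plan is to reduce the theorem to the special case handled in \eqref{strgl}, which asserts that $\mathrm{Str}_{1,\ldots,r}(\widehat{E}_1\widehat{E}_2\cdots\widehat{E}_r\,S')\in\mathcal{Z}(\mathfrak{gl}_{m|n})$ for every $\mathfrak{gl}_{m|n}$-invariant $S'\in\mathrm{End}(V)^{\otimes r}$. Since each $u_i$ is a scalar that commutes with every $\widehat{E}_j$, the first step is to expand
$$(u_1+\widehat{E}_1)(u_2+\widehat{E}_2)\cdots(u_k+\widehat{E}_k)=\sum_{A\subseteq\{1,\ldots,k\}}\Bigl(\prod_{i\notin A}u_i\Bigr)\widehat{E}_A,$$
where for $A=\{i_1<i_2<\cdots<i_r\}$ the operator $\widehat{E}_A:=\widehat{E}_{i_1}\widehat{E}_{i_2}\cdots\widehat{E}_{i_r}$ acts nontrivially only in the tensor slots indexed by $A$, and as $I_V$ in every slot outside $A$.

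Next, to evaluate $\mathrm{Str}_{1,\ldots,k}(\widehat{E}_A\,S)$, I would apply the partial supertrace $\mathrm{Str}_{A^c}$ first. Because $\widehat{E}_A$ is the identity in each position of $A^c$, the partial supertrace should factor as
$$\mathrm{Str}_{A^c}(\widehat{E}_A\,S)=\widehat{E}_A\,\mathrm{Str}_{A^c}(S),$$
under the canonical identification of the remaining tensor positions with $\{1,\ldots,r\}$. The key observation is that the ordinary supertrace $\mathrm{Str}\colon\mathrm{End}(V)\to\mathbb{C}$ is a $\mathfrak{gl}_{m|n}$-module homomorphism, since $\mathrm{Str}$ vanishes on super-commutators; consequently, partial supertraces send $\mathfrak{gl}_{m|n}$-invariants to $\mathfrak{gl}_{m|n}$-invariants. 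Therefore $S_A:=\mathrm{Str}_{A^c}(S)\in[\mathrm{End}(V)^{\otimes r}]^{\mathfrak{gl}_{m|n}}$, and by \eqref{strgl},
$$\mathrm{Str}_{1,\ldots,k}(\widehat{E}_A\,S)=\mathrm{Str}_{1,\ldots,r}(\widehat{E}_1\widehat{E}_2\cdots\widehat{E}_r\,S_A)\in\mathcal{Z}(\mathfrak{gl}_{m|n}).$$
Summing over all $A\subseteq\{1,\ldots,k\}$ would then yield the desired conclusion.

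The main technical obstacle I anticipate is the careful justification of the factorization $\mathrm{Str}_{A^c}(\widehat{E}_A\,S)=\widehat{E}_A\,\mathrm{Str}_{A^c}(S)$. The entries of $\widehat{E}$ live in the noncommutative superalgebra $\mathrm{U}(\mathfrak{gl}_{m|n})$, and the supermatrix convention \eqref{tensornotation} introduces the sign twist $(-1)^{|i||j|+|j|}$ whenever tensor positions are interchanged, so pulling the identity factors of $\widehat{E}_A$ through the partial supertrace in positions of $A^c$ requires checking that no spurious signs appear. This should follow from the fact that $I_V$ is even and that $\mathrm{Str}$ vanishes on odd elements, so the only surviving contributions come from even tensor positions where all signs are trivial; once this factorization is rigorously established, the conclusion is immediate from \eqref{strgl}.
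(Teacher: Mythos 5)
Your proof is correct, and it takes a genuinely different route from the one the paper intends. The paper does not spell out a proof at all: it simply appeals to Molev's Theorem~4.5.1 (stated there for $\gl_N$), whose proof is a \emph{direct} commutation argument. One verifies that the element supercommutes with a fresh copy $\widehat{E}_0$ by using the matrix relation \eqref{eq:glpresent} in the form $[\widehat{E}_0,\widehat{E}_a]=[P_{0a},\widehat{E}_a]=[P_{0a},u_a+\widehat{E}_a]$, summing over $a$ to get $[\widehat{E}_0,\prod_a(u_a+\widehat{E}_a)]=[\sum_a P_{0a},\prod_a(u_a+\widehat{E}_a)]$, invoking the invariance of $S$ in the form $[\sum_a P_{0a},S]=0$, and finally killing the whole bracket by cyclicity of the full supertrace. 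The scalars $u_a$ never play a role there because $[P_{0a},u_a]=0$. Your argument instead reduces the general $(u_1,\dots,u_k)$ case to the already-established $u=0$ case \eqref{strgl}: expand $\prod_a(u_a+\widehat{E}_a)=\sum_A\bigl(\prod_{i\notin A}u_i\bigr)\widehat{E}_A$ (legitimate because the $u_i$ are central and the $\widehat{E}_i$ appear in increasing order within each $\widehat{E}_A$), peel off the supertrace over $A^c$, note that $\mathrm{Str}_{A^c}$ sends invariants to invariants since $\mathrm{Str}$ kills supercommutators, and apply \eqref{strgl} with $r=|A|$ tensor slots.

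The one step you flag as delicate, $\mathrm{Str}_{A^c}(\widehat{E}_A S)=\widehat{E}_A\,\mathrm{Str}_{A^c}(S)$, does hold, and your heuristic is the right one: when a factor $e_{ij}$ of $\widehat{E}_A$ is pulled past an $\mathrm{End}(V)$-factor of $S$ sitting in a slot $b\in A^c$, the super tensor product introduces the sign $(-1)^{(|i|+|j|)\cdot|\text{slot-}b\text{ factor}|}$, but after $\mathrm{Str}_b$ only the even slot-$b$ factors survive, so that sign is $+1$ whenever the term contributes. It would strengthen the write-up to record this computation once (e.g.\ in the two-slot case) rather than leave it as an announcement. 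The trade-off between the two approaches: yours is shorter and leverages the paper's earlier work \eqref{strgl}, but is not self-contained; Molev's direct argument re-derives \eqref{strgl} as the $u=0$ specialization and requires no auxiliary reduction, at the cost of manipulating the $RLL$-type relation and trace cyclicity by hand.
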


The following remark states that the classical Gelfand invariants constructed from the matrix presentation coincide with the invariants from the Schur-Weyl duality if we set $\sigma^{-1}=(12\cdots k)\in\mathfrak{S}_k$.
\begin{remark}
The classical Gelfand invariants  $\mathrm{Str}\widehat{E}^{k} $ are equal to  $\eta'\circ\pi(\theta_{\sigma^{-1}})$, where $\sigma^{-1}=(12\cdots k)$. Indeed, let $P_{\sigma}=P_{k-1,k}\cdots P_{23}P_{12}$, then 
$$P_{\sigma}=\mathop{\sum}\limits_{\mathbf{I}}(-1)^{|i_k|+|i_1||i_k|+|i_2||i_1|+\cdots +|i_{k}||i_{k-1}|}e_{i_1,i_k}\otimes e_{i_2,i_1}\otimes\cdots\otimes e_{i_k,i_{k-1}}.$$
Then, we have
\begin{align*}
\mathfrak{z}_{\sigma^{-1}}=\mathop{\sum}\limits_{\mathbf{I}}(-1)^{|i_1|+|i_2|+\cdots+|i_{k-1}| }E_{i_ki_1}E_{i_1i_2}\cdots E_{i_{k-1}i_k}
=\mathrm{Str}_{1,\cdots,k}P_{\sigma}\widehat{E}_{1}\widehat{E}_2\cdots \widehat{E}_k=\mathrm{Str}\widehat{E}^{k}.
\end{align*}
\end{remark}

\begin{corollary}
The elements $\mathrm{Str}\widehat{E}^k $ with $ k\geqslant 1 $ generate the center $\mathcal{Z}(\mathfrak{gl}_{m|n})$ of the universal enveloping algebra $\mathrm{U}(\mathfrak{gl}_{m|n})$.
\end{corollary}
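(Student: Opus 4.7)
The plan is to upgrade the immediately preceding corollary (which asserts that $\eta\circ\pi(\theta_{(12\cdots k)})$ for $k\geq 1$ generate $S(\mathfrak{gl}_{m|n})^{\mathfrak{gl}_{m|n}}$ as a commutative algebra) to a statement about $\mathcal{Z}(\mathfrak{gl}_{m|n})$ by the standard PBW-lifting trick. The preceding remark already identifies $\mathrm{Str}\widehat{E}^{k}$ with $\mathfrak{z}_{(12\cdots k)}=\eta'\circ\pi(\theta_{(12\cdots k)})\in \mathrm{U}_{k}(\mathfrak{gl}_{m|n})$, so the first step is to pin down its principal symbol in $\mathrm{gr}_{k}\mathrm{U}(\mathfrak{gl}_{m|n})\cong S^{k}(\mathfrak{gl}_{m|n})$. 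Since $\eta'$ sends a tensor $x_{1}\otimes\cdots\otimes x_{k}$ to the ordered product $x_{1}\cdots x_{k}$ in $\mathrm{U}(\mathfrak{gl}_{m|n})$, which is congruent modulo $\mathrm{U}_{k-1}(\mathfrak{gl}_{m|n})$ to the super-symmetric product $x_{1}x_{2}\cdots x_{k}\in S^{k}(\mathfrak{gl}_{m|n})$, this symbol equals $\eta\circ\pi(\theta_{(12\cdots k)})$. Combined with the preceding corollary, the symbols of $\mathrm{Str}\widehat{E}^{1},\mathrm{Str}\widehat{E}^{2},\ldots$ therefore generate $S(\mathfrak{gl}_{m|n})^{\mathfrak{gl}_{m|n}}$.

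Next I would invoke the classical fact that the super-symmetrization map $\psi$ restricts to a filtration-preserving linear bijection $S(\mathfrak{gl}_{m|n})^{\mathfrak{gl}_{m|n}}\xrightarrow{\sim}\mathcal{Z}(\mathfrak{gl}_{m|n})$ whose associated graded is the identity, so that $\mathrm{gr}\,\mathcal{Z}(\mathfrak{gl}_{m|n})\cong S(\mathfrak{gl}_{m|n})^{\mathfrak{gl}_{m|n}}$ as graded algebras. With this identification in hand, I would run the usual PBW-degree induction: letting $\mathcal{A}$ denote the subalgebra of $\mathcal{Z}(\mathfrak{gl}_{m|n})$ generated by $\{\mathrm{Str}\widehat{E}^{k}\}_{k\geq 1}$ and taking $z\in\mathcal{Z}(\mathfrak{gl}_{m|n})$ of PBW-degree $d$, one writes the symbol of $z$ as a polynomial $P$ in the symbols of $\mathrm{Str}\widehat{E}^{1},\ldots,\mathrm{Str}\widehat{E}^{d}$; then $z-P(\mathrm{Str}\widehat{E}^{1},\ldots,\mathrm{Str}\widehat{E}^{d})$ is central of strictly smaller PBW-degree, and the induction closes.

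The main delicate point is verifying that the leading symbol of $\mathfrak{z}_{(12\cdots k)}$ really is $\eta\circ\pi(\theta_{(12\cdots k)})$ and not some distorted version; this boils down to the observation that, restricted to $\mathfrak{g}^{\otimes k}$, the maps $\eta'$ and $\psi\circ\eta$ differ only by elements of lower PBW-degree, so they have the same principal symbol. Once this comparison is nailed down, every remaining step is essentially formal, and the whole argument is driven by the algebra generation result already proved for $S(\mathfrak{gl}_{m|n})^{\mathfrak{gl}_{m|n}}$.
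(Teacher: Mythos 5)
Your proof is correct, but it takes a genuinely different route from the paper's. The paper's proof is a one-liner through the Harish-Chandra homomorphism: it observes that the top degree component of $\gamma_{-\rho}\circ\zeta(\mathrm{Str}\widehat{E}^k)$ is the power sum supersymmetric polynomial $h_1^k+\cdots+h_m^k+(-1)^{k-1}(h_{m+1}^k+\cdots+h_{m+n}^k)$, and then invokes the Gorelik--Kac--Sergeev theorem that $\gamma_{-\rho}\circ\zeta$ is an isomorphism from $\mathcal{Z}(\mathfrak{gl}_{m|n})$ onto $I(\mathfrak{h})$ together with the fact that the power sums generate the ring of supersymmetric polynomials. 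You instead avoid Harish-Chandra entirely and run the standard PBW-filtration argument, lifting the generation statement already established for $S(\mathfrak{gl}_{m|n})^{\mathfrak{gl}_{m|n}}$ up to $\mathcal{Z}(\mathfrak{gl}_{m|n})$ via the identification $\mathrm{gr}\,\mathcal{Z}(\mathfrak{g})\cong S(\mathfrak{g})^{\mathfrak{g}}$. Your delicate point — that $\eta'$ and $\psi\circ\eta$ agree on $\mathfrak{g}^{\otimes k}$ modulo $\mathrm{U}_{k-1}(\mathfrak{g})$, so $\mathrm{Str}\widehat{E}^k=\eta'\circ\pi(\theta_{(12\cdots k)})$ has principal symbol $\eta\circ\pi(\theta_{(12\cdots k)})$ — is verified correctly, and the isobaric-polynomial degree induction closes. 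What each buys: the paper's route is more self-contained (it needs only the Harish-Chandra isomorphism and a trivial top-degree computation, and does not actually depend on the preceding corollary about $S(\mathfrak{g})^{\mathfrak{g}}$), while yours is more structural, recycles the earlier work (Proposition~\ref{sigmacycle} and the corollary on $S(\mathfrak{gl}_{m|n})^{\mathfrak{gl}_{m|n}}$), and would apply verbatim whenever one already knows a filtration-compatible generating set for the invariants in the symmetric algebra.
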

\begin{proof}
The top degree component of the polynomial $\gamma_{-\rho}\circ\zeta(\mathrm{Str}\widehat{E}^k)$ is equal to $h_1^k+\cdots +h_m^k+(-1)^{k-1}(h_{m+1}^k+\cdots+h_{m+n}^k)$. Therefore, the Gelfand invariants $\mathrm{Str}\widehat{E}^k$ with $k\geqslant 1$ generate the algebra $\mathcal{Z}(\mathfrak{gl}_{m|n})$.
\end{proof}
\subsection{Explicit invariants for $\mathfrak{q}_{n}$}\label{subsectionq}

Note that $\mathfrak{q}_n$ is also an associative superalgebra with multiplication
\begin{align*}
    \begin{pmatrix}
        A&B\\B&A
    \end{pmatrix}\cdot\begin{pmatrix}
        C&D\\D&C
    \end{pmatrix}=\begin{pmatrix}
        AC+BD&AD+BC\\BC+AD&BD+AC
    \end{pmatrix},
\end{align*}
for all $A,B,C,D$ are $n\times n$-matrices. Moreover, the map $\tilde{\pi}^{\otimes k}\circ\Omega_k$ is a $\mathrm{U}(\mathfrak{q}_{n})$-module superalgebra homomorphism from $\mathrm{End}(V^{\otimes k})$ to $\mathfrak{q}_{n}^{\otimes k}$, where the multiplication of $\mathfrak{q}_{n}^{\otimes k}$ is induced by the multiplication of $\mathfrak{q}_{n}$ as defined above. Composing the map with $\Psi_k$, we obtain a surjective algebra homomorphism from Hecke-Clifford algebra $\mathcal{H}_k$ to $\left(\mathfrak{q}_{n}^{\otimes k}\right)^{\mathfrak{q}_{n}}$. 

Since $\tilde{\pi}(\mathcal{P})=0$, we have 
$$\tilde{\pi}^{\otimes k}\circ\Omega_k\circ \Psi_k(c_r)=\mathop{\sum}\limits_{\mathbf{I}}  \tilde{\pi}(e_{i_1i_1})\otimes \cdots\otimes \tilde{\pi}( e_{i_{r-1}i_{r-1}})\otimes \tilde{\pi}(\mathcal{P})\otimes \tilde{\pi}(e_{i_ri_r})\otimes\cdots\otimes \tilde{\pi}(e_{i_ki_k})=0,$$
for $1\leqslant r\leqslant k$. Therefore,  we have the following proposition.
\begin{proposition}
For every $\sigma\in\mathfrak{S}_k$, $\mathbf{I}=(i_1,\ldots,i_k)$, then 
$$ \pi(\theta_{\sigma^{-1}})=\frac{1}{2^k}\mathop{\sum}\limits_{\mathbf{I}} p(\mathbf{I}+\mathbf{I}_{\sigma},\mathbf{I})\gamma(\mathbf{I},\sigma) H_{i_{\sigma(1)},i_1}\otimes \cdots\otimes H_{i_{\sigma(k)},i_k}$$ 
is a $\mathfrak{q}_n$-invariant in $\mathfrak{q}_n^{\otimes k}$ and these elements span $\left(\mathfrak{q}_n^{\otimes k}\right)^{\mathfrak{q}_n}$.
\end{proposition}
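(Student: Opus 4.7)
The plan is to handle this in three steps, all of which draw on machinery already developed earlier in the section.

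First, the stated formula for $\pi(\theta_{\sigma^{-1}})$ is a direct computation: by Proposition \ref{le:split}, for $\mathfrak{g}=\mathfrak{q}_n$ one has $\tilde{\pi}(e_{ij})=H_{ij}/2$, and applying $\pi=\tilde{\pi}^{\otimes k}$ term-by-term to the definition \eqref{sign} of $\theta_{\sigma^{-1}}$ simply pulls out a global factor $2^{-k}$ and replaces each $e_{i_{\sigma(r)}i_r}$ with $H_{i_{\sigma(r)}i_r}$. The $\mathfrak{q}_n$-invariance is automatic: $\theta_{\sigma^{-1}}$ lies in $[\mathrm{End}(V)^{\otimes k}]^{\mathfrak{q}_n}$ by construction, and $\tilde{\pi}^{\otimes k}$ is a $\mathfrak{q}_n$-module superalgebra homomorphism by Proposition \ref{le:split}, so the image lies in $(\mathfrak{q}_n^{\otimes k})^{\mathfrak{q}_n}$.

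Second, I would establish surjectivity of the composite $\tilde{\pi}^{\otimes k}\circ\Omega_k\circ\Psi_k\colon \mathcal{H}_k\twoheadrightarrow (\mathfrak{q}_n^{\otimes k})^{\mathfrak{q}_n}$. By Schur–Sergeev duality (Theorem \ref{thm:sw}) the map $\Psi_k\colon \mathcal{H}_k\to \mathrm{End}_{\mathfrak{q}_n}(V^{\otimes k})$ is surjective; the remark following \eqref{Omega} says $\Omega_k$ restricts to an isomorphism onto $[\mathrm{End}(V)^{\otimes k}]^{\mathfrak{q}_n}$; and the splitting $\iota\colon \mathfrak{q}_n\hookrightarrow \mathrm{End}(V)$ provided by Proposition \ref{le:split} yields $\tilde{\pi}^{\otimes k}\circ \iota^{\otimes k}=\mathrm{id}$ on $\mathfrak{q}_n^{\otimes k}$, from which restriction to invariants forces $\tilde{\pi}^{\otimes k}$ to be surjective on the invariant subspaces. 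Chaining these three surjections gives what is needed.

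Finally, I would observe that the image of $\mathcal{H}_k$ under this composite is already spanned by the images of pure permutations. The composite is an algebra homomorphism (each factor is), and the calculation displayed just before the proposition statement shows $\tilde{\pi}^{\otimes k}\circ\Omega_k\circ\Psi_k(c_r)=0$ for every $r$. Since $\mathcal{H}_k=\mathbb{C}\mathfrak{S}_k\ltimes\mathcal{C}_k$ is spanned by elements of the form $\sigma\cdot c_{j_1}\cdots c_{j_s}$, any such element with $s\geqslant 1$ is annihilated; hence only the elements $\pi(\theta_{\sigma^{-1}})=\tilde{\pi}^{\otimes k}\circ\Omega_k\circ\Psi_k(\sigma)$ with $\sigma\in\mathfrak{S}_k$ are needed to span $(\mathfrak{q}_n^{\otimes k})^{\mathfrak{q}_n}$. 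The only subtle point is the implicit use that $\Omega_k$ is multiplicative---necessary to propagate the vanishing from a single $c_r$ to every word containing a Clifford generator---but this is granted in the text following \eqref{Omega}, so no new technical calculation is required.
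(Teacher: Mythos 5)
Your argument is identical to the paper's: establish surjectivity of $\tilde{\pi}^{\otimes k}\circ\Omega_k\circ\Psi_k$ onto the invariant subspace, observe that the Clifford generators $c_r$ are killed, and propagate that vanishing via multiplicativity. The first two steps are sound, but the propagation step does not go through, because $\tilde{\pi}$ (hence $\tilde{\pi}^{\otimes k}$) is \emph{not} a superalgebra homomorphism---notwithstanding the assertion in the paragraph preceding the proposition that $\tilde{\pi}^{\otimes k}\circ\Omega_k$ is one. The kernel $\mathfrak{q}_n^{\perp}$ of $\tilde{\pi}$ is a $\mathfrak{q}_n$-module complement but not a two-sided ideal: $\mathcal{P}\in\mathfrak{q}_n^{\perp}$ yet $\mathcal{P}^2=I_V\in\mathfrak{q}_n$, and explicitly $\tilde{\pi}(e_{11})\tilde{\pi}(e_{11})=\tfrac{1}{4}H_{11}\neq\tfrac{1}{2}H_{11}=\tilde{\pi}(e_{11}e_{11})$. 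Consequently $\tilde{\pi}^{\otimes k}\circ\Omega_k\circ\Psi_k(c_r)=0$ does not by itself control the images of $\sigma\,c_{j_1}\cdots c_{j_s}$, which is what your third paragraph needs. (You did flag the reliance on multiplicativity but attributed it to $\Omega_k$, which is indeed multiplicative; the problematic factor is $\tilde{\pi}^{\otimes k}$.)

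The gap is not cosmetic. Every $\pi(\theta_{\sigma^{-1}})$ with $\sigma\in\mathfrak{S}_k$ is an \emph{even} element of $\mathfrak{q}_n^{\otimes k}$, whereas $\bigl(\mathfrak{q}_n^{\otimes k}\bigr)^{\mathfrak{q}_n}$ already has a nonzero odd part for $k=2$: the nondegenerate odd invariant form $(x,y)\mapsto\mathrm{Str}(\mathcal{P}xy)$ on $\mathfrak{q}_n$ supplies one. A direct check at $n=1$ gives $\tilde{\pi}^{\otimes 2}\circ\Omega_2\circ\Psi_2\bigl((12)c_1\bigr)=\tfrac{\sqrt{-1}}{2}\bigl(H_{-1,1}\otimes H_{11}-H_{11}\otimes H_{-1,1}\bigr)\neq 0$, which is odd and hence not in the span of $\pi(\theta_1)$ and $\pi(\theta_{(12)})$. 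Such odd invariants are killed by $\eta$ (supercommutativity in $S(\mathfrak{q}_n)$), so the later conclusion that the $\mathfrak{z}_\sigma$ span $\mathcal{Z}(\mathfrak{q}_n)$ may well stand; but the spanning claim of this proposition, and your proof of it, need either to be restricted to the even subspace (with a separate argument there) or to replace the multiplicativity shortcut with a direct analysis of the images of the Clifford words.
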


We conclude that the set $\left\{\psi\circ\eta\circ\pi(\theta_{\sigma})\big|\sigma\in \mathfrak{S}_k,k\in \mathbb{Z}_+\right\}$ spans the center $\mathcal{Z}(\mathfrak{q}_n)$. Here, $\eta\colon T(\mathfrak{q}_n)\longrightarrow  S(\mathfrak{q}_n)$  is the split $\mathfrak{q}_n$-module homomorphism and $\psi\colon S(\mathfrak{q}_n)\longrightarrow  \mathrm{U}(\mathfrak{q}_n)$ is the $\mathfrak{q}_n$-module isomorphism, as shown in \eqref{split} and \eqref{psi}. According to 
Theorem \ref{etaeta'}, the $\mathfrak{q}_n$-invariant in $\mathrm{U}(\mathfrak{q}_n)$ can be spanned by the set $\big\{\mathfrak{z}_{\sigma}\big|\sigma\in \mathfrak{S}_k,k\in \mathbb{Z}_+\big\}$. Therefore, the restriction of $\eta'$ on $T(\mathfrak{q}_n)^{\mathfrak{q}_n}$ is surjective.

\begin{remark}\label{gelfandq}
If $\sigma=(12\cdots k)$, then
$$\mathfrak{z}_{\sigma}=\frac{1}{2^k}\mathop{\sum}\limits_{\mathbf{I}}(-1)^{|i_1|+|i_2|+\cdots+|i_{k-1}| }H_{i_ki_1}H_{i_1i_2}\cdots H_{i_{k-1}i_k}$$
by Example \ref{examplegelfand}. Note that $H_{ij}=H_{-i,-j}$ for all $i,j$, hence
\begin{align*}
\mathfrak{z}_{\sigma}=&\frac{1}{2^k}\mathop{\sum}\limits_{\mathbf{I}}(-1)^{|-i_1|+|-i_2|+\cdots+|-i_{k-1}| }H_{-i_k,-i_1}H_{-i_1,-i_2}\cdots H_{-i_{k-1},-i_k}\\
=&\frac{(-1)^{k-1}}{2^k}\mathop{\sum}\limits_{\mathbf{I}}(-1)^{|i_1|+|i_2|+\cdots+|i_{k-1}| }H_{i_k,i_1}H_{i_1,i_2}\cdots H_{i_{k-1},i_k}\\
=&(-1)^{k-1}\mathfrak{z}_{\sigma}.
\end{align*}
This means $\mathfrak{z}_{\sigma}=0$ if $k$ is even.
\end{remark}
In \cite{Se83}, the author asserted that the generators of $\mathcal{Z}(\mathfrak{q}_n)$,  which has been demonstrated by Sergeev and Nazarov in \cite{NS06}. Subsequently, we will proceed to compare his results with ours. Let 
$$e_{ij}=e_{ij}^{(1)}=E_{ij}+E_{-i,-j},\quad f_{ij}=f_{ij}^{(1)}=E_{i,-j}+E_{-i,j}$$ 
as the standard bases of $\mathfrak{q}_n$. The elements $e_{ij}^{(m)}$ and $f_{ij}^{(m)}$ that belong to $\mathrm{U}(\mathfrak{q}_n)$ are defined by induction as follows:
\begin{align*}
e_{ij}^{(m)}&=\mathop{\sum}\limits_{i=1}^n e_{ik}e_{kj}^{(m-1)}+(-1)^{m-1}\mathop{\sum}\limits_{i=1}^n f_{ik}f_{kj}^{(m-1)},\\
f_{ij}^{(m)}&=\mathop{\sum}\limits_{i=1}^n e_{ik}f_{kj}^{(m-1)}+(-1)^{m-1}\mathop{\sum}\limits_{i=1}^n f_{ik}e_{kj}^{(m-1)}.
\end{align*}
Then $\mathcal{Z}_{k}=\mathop{\sum}\limits_{i=1}^ne_{ii}^{(k)}$ with odd $k$ belong to $\mathcal{Z}(\mathfrak{g})$ and these elements generate the algebra $\mathcal{Z}(\mathfrak{g})$.

\begin{proposition}
Let $k$ be an odd integer and $\sigma=(12\cdots k)$, then  $\mathcal{Z}_k=2^{k-1}\mathfrak{z}_{\sigma}$.
\end{proposition}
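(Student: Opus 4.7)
The plan is to unfold the definitions and reduce the identity to a purely combinatorial recurrence on products of $H_{ij}$'s that exactly matches Sergeev's inductive definition of $e_{jl}^{(k)}$ and $f_{jl}^{(k)}$. Concretely, starting from the formula displayed in Remark \ref{gelfandq},
\[
2^{k-1}\mathfrak{z}_\sigma=\tfrac{1}{2}\sum_{\mathbf{I}}(-1)^{|i_1|+\cdots+|i_{k-1}|}H_{i_ki_1}H_{i_1i_2}\cdots H_{i_{k-1}i_k},
\]
the first step is to isolate the dependence on $i_k$. Splitting the sum according to the sign of $i_k$ and using the relation $H_{ab}=H_{-a,-b}$, together with the substitution $i_s\mapsto -i_s$ (which flips every parity $|i_s|$ and thus introduces a factor $(-1)^{k-1}$), one shows that the $i_k<0$ contribution equals $(-1)^{k-1}$ times the $i_k>0$ contribution. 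Since $k$ is odd, this halves the sum and converts the outer $\tfrac{1}{2}$ into the constraint $i_k>0$, giving
\[
2^{k-1}\mathfrak{z}_\sigma=\sum_{j=1}^{n}X_{jj}^{(k)},\qquad X_{ab}^{(k)}:=\sum_{i_1,\dots,i_{k-1}}(-1)^{|i_1|+\cdots+|i_{k-1}|}H_{a,i_1}H_{i_1,i_2}\cdots H_{i_{k-1},b}.
\]

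Next, I would prove by induction on $k\geq 1$ the two identities
\[
X_{jl}^{(k)}=e_{jl}^{(k)},\qquad X_{-j,l}^{(k)}=(-1)^{k-1}f_{jl}^{(k)},\qquad j,l\in\{1,\dots,n\}.
\]
The base case $k=1$ is immediate from $H_{jl}=e_{jl}$ and $H_{-j,l}=H_{j,-l}=f_{jl}$. For the inductive step I would split off the leftmost factor: writing $X_{ab}^{(k+1)}=\sum_{c}(-1)^{|c|}H_{a,c}X_{c,b}^{(k)}$ and grouping $c>0$ with $c<0$, the identifications $H_{j,c}=e_{jc}$ and $H_{j,-c}=f_{jc}$ (for $c>0$) combined with the inductive hypothesis produce exactly Sergeev's recurrences
\[
e_{jl}^{(k+1)}=\sum_{c}e_{jc}\,e_{cl}^{(k)}+(-1)^{k}\sum_{c}f_{jc}\,f_{cl}^{(k)},\qquad f_{jl}^{(k+1)}=\sum_{c}e_{jc}\,f_{cl}^{(k)}+(-1)^{k}\sum_{c}f_{jc}\,e_{cl}^{(k)}.
\]
The same computation applied to $X_{-j,l}^{(k+1)}$, using $H_{-j,c}=f_{jc}$ and $H_{-j,-c}=e_{jc}$, yields $X_{-j,l}^{(k+1)}=(-1)^{k}f_{jl}^{(k+1)}$.

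Combining the two steps, $2^{k-1}\mathfrak{z}_\sigma=\sum_{j=1}^{n}X_{jj}^{(k)}=\sum_{j=1}^{n}e_{jj}^{(k)}=\mathcal{Z}_k$, which finishes the proof. The only delicate point in the whole argument is keeping track of the parities $|i_s|$ when performing the sign-flip substitution and matching them against the alternating signs $(-1)^{m-1}$ in Sergeev's recurrence; once the auxiliary quantity $X_{ab}^{(k)}$ is defined with indices running over the full set $\{\pm 1,\dots,\pm n\}$, both sides become governed by the same two-by-two block structure $(e,f)\leftrightarrow(H_{j,c},H_{j,-c})$, so the induction proceeds without further complications.
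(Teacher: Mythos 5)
Your proof is correct and takes essentially the same approach as the paper's: the parity-flip substitution $i_s\mapsto -i_s$ (using $H_{ab}=H_{-a,-b}$ and $k$ odd) to reduce the outer index to positive values, followed by an induction on the products of $H_{ij}$'s that reproduces Sergeev's recurrences for $e_{jl}^{(m)}$ and $f_{jl}^{(m)}$. The only cosmetic difference is that the paper keeps both outer indices positive and uses two auxiliary quantities $E_{i_0i_m}^{(m)}$ and $F_{i_0i_m}^{(m)}$ (the latter flipping the sign of the final index, so $F^{(m)}=f^{(m)}$ with no extra sign), whereas you allow the first outer index of $X^{(k)}$ to carry a sign and track the resulting factor $(-1)^{k-1}$ explicitly via $X_{-j,l}^{(k)}=(-1)^{k-1}f_{jl}^{(k)}$.
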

\begin{proof}
Denote by \begin{align*}
E_{i_0i_m}^{(m)}&=\mathop{\sum}\limits_{i_1,\cdots,i_{m-1}}(-1)^{|i_1|+|i_2|+\cdots+|i_{m-1}|} H_{i_0i_1}H_{i_1i_2}\cdots H_{i_{m-2}i_{m-1}}H_{i_{m-1}i_m},\\
F_{i_0i_m}^{(m)}&=\mathop{\sum}\limits_{i_1,\cdots,i_{m-1}}(-1)^{|i_1|+|i_2|+\cdots+|i_{m-1}|} H_{i_0i_1}H_{i_1i_2}\cdots H_{i_{m-2}i_{m-1}}H_{i_{m-1},-i_m},
\end{align*}
for $1\leqslant i_0,i_m\leqslant n$. Then
\begin{align*}
E_{i_0i_{m+1}}^{(m+1)}=&\mathop{\sum}\limits_{i_1,\cdots,i_{m}}(-1)^{|i_1|+|i_2|+\cdots+|i_{m}|} H_{i_0i_1}H_{i_1i_2}\cdots H_{i_{m-1}i_{m}}H_{i_{m}i_{m+1}}\\
=&e_{i_0i_1}\mathop{\sum}\limits_{\substack{i_2,\cdots,i_{m}\\i_1>0}}(-1)^{|i_2|+\cdots+|i_{m}|} H_{i_1i_2}\cdots H_{i_{m-1}i_{m}}H_{i_{m}i_{m+1}}\\
&-f_{i_0i_1}\mathop{\sum}\limits_{\substack{i_2,\cdots,i_{m}\\i_1<0}}(-1)^{|i_2|+\cdots+|i_{m}|} H_{i_1i_2}\cdots H_{i_{m-1}i_{m}}H_{i_{m}i_{m+1}}\\
=&e_{i_0i_1}E_{i_1i_{m+1}}^{(m)}-f_{i_0i_1}\mathop{\sum}\limits_{\substack{i_2,\cdots,i_{m}\\i_1>0}}(-1)^{|-i_2|+\cdots+|-i_{m}|} H_{i_1i_2}\cdots H_{i_{m-1}i_{m}}H_{i_{m},-i_{m+1}}\\
=&e_{i_0i_1}E_{i_1i_{m+1}}^{(m)}+(-1)^{m}f_{i_0i_1}F_{i_1i_{m+1}}^{(m)}.
\end{align*}
Similarly,
\begin{align*}
F_{i_0i_{m+1}}^{(m+1)}=e_{i_0i_1}F_{i_1i_{m+1}}^{(m)}+(-1)^{m}f_{i_0i_1}E_{i_1i_{m+1}}^{(m)}.
\end{align*}
Therefore, the recursive relation of $E_{ij}^{(m)}$ and $F_{ij}^{(m)}$ is the same as $e_{ij}^{(m)}$ and $f_{ij}^{(m)}$, respectively. Thus, $E_{ij}^{(m)}=e_{ij}^{(m)}$ (resp. $F_{ij}^{(m)}=f_{ij}^{(m)}$) holds for all $1\leqslant i,j\leqslant n$ and $m\in\mathbb{Z}_{>0}$. This means
\begin{align*}
\mathcal{Z}_k=\mathop{\sum}\limits_{i=1}^ne_{ii}^{(k)}=\mathop{\sum}\limits_{i=1}^nE_{ii}^{(k)}=2^{k-1}\mathfrak{z}_{\sigma}.
\end{align*}
by Remark \ref{gelfandq}.
\end{proof}

If $k$ is odd, then the top degree component of the image of the Harish-Chandra homomorphism of $\mathcal{Z}_k$ is equal to $h_1^k+\cdots +h_n^k$. Therefore, the central elements $\mathcal{Z}_k$ with $k$ odd generate the center $\mathcal{Z}(\mathfrak{q}_n)$.

\subsection{Explicit invariants for $\mathfrak{osp}_{m|2n}$}\label{subsectionosp}
Since the vector representation of $\mathfrak{osp}_{m|2n}$ is self-dual, thus $V^{\otimes 2k}$ is isomorphic to $\mathrm{End}(V^{\otimes k})$ as $\mathfrak{osp}_{m|2n}$-modules. Therefore, there is a bijection between the $\mathfrak{g}$-invariants of $V^{\otimes 2k}$ and the $\mathfrak{g}$-invariants of $\mathrm{End}(V^{\otimes k})$. 

Note that $c=\mathop{\sum}\limits_{i}\varepsilon_{i'}e_i\otimes e_{i'}\in V^{\otimes 2}$ is an $\mathfrak{osp}_{m|2n}$-invariant, so does $$c^{\otimes k}=\mathop{\sum}\limits_{\mathbf{I}}\varepsilon_{i_1'}\cdots\varepsilon_{i_k'}e_{i_1}\otimes e_{i_1'}\otimes\cdots\otimes e_{i_k}\otimes e_{i_k'}.$$
By \cite[Section 3]{DLZ18}, the set $\{\sigma\cdot c^{\otimes k}\}_{\sigma\in\mathfrak{S}_{2k}}$ spans $\big(V^{\otimes 2k}\big)^{\mathrm{OSP}(V)}$, where $\mathrm{OSP}(V)$ is the ortho-symplectic supergroup. Denote $\overline{g}\in \mathfrak{S}_{2k}$ by
\begin{align}\label{tilde{g}}
    \overline{g}(2s-1)=2g(s)-1 \quad \text{and}\quad \overline{g}(2s)=2g(s),
\end{align}
for all  $g\in\mathfrak{S}_k$ and $s=1,\cdots,k$.  

Let $K$ be the subgroup of $\mathfrak{S}_{2k}$ generated by the swaps $(12), (34),\cdots, (2k-1,2k)$ and $H$ be the subgroup of $\mathfrak{S}_{2k}$ generated by elements in $K$ and $\overline{g}$ for all $g\in\mathfrak{S}_k$.

Obviously, $H$ acts trivially on $c^{\otimes 2k}$. Let $\mathrm{B}_k$ be a set of representatives of the left coset $\mathfrak{S}_{2k}/H$. Consequently, the set $\{\sigma\cdot c^{\otimes k}\}_{\mathrm{B}_k}$ spans $\big(V^{\otimes 2k}\big)^{\mathrm{OSP}(V)}$. The cardinality of $\mathrm{B}_k$ is $(2k-1)!!$, which is the same as the dimension of the Brauer algebra $\mathcal{B}_k(m-2n)$. 

Let 
$$W=\left\{(j_1,\cdots,j_{2k})|~~ j_1=j_2',j_3=j_4',\cdots,j_{2k-1}=j_{2k}' \right\}.$$
Now, we define $\mathbf{J}^{o}=(j_1,j_3,\cdots,j_{2k-1})$ and $\mathbf{J}^{e}=(j_2,j_4,\cdots,j_{2k})$, and denote $\varepsilon(\mathbf{J}^{e})$ by $\varepsilon_{j_{2}}\varepsilon_{j_{4}}\cdots\varepsilon_{j_{2k}}$, then $c^{\otimes k}=\mathop{\sum}\limits_{J\in W}\varepsilon(\mathbf{J}^{e})e_{j_1}\otimes e_{j_2}\otimes \cdots\otimes e_{j_{2k}}$, and  
\begin{align}\label{sigmack}
\sigma^{-1}\cdot c^{\otimes k}=\mathop{\sum}\limits_{\mathbf{J}\in W}\varepsilon(\mathbf{J}^{e}) \gamma(\mathbf{J},\sigma)e_{j_{\sigma(1)}}\otimes \cdots\otimes e_{j_{\sigma(2k)}}.
\end{align}

Recall that $V$ is isomorphic to $V^*$ through the map $\Theta\colon e_i\mapsto \varepsilon_ie_{i'}^*$. Now, we present a procedure for converting the element $\sigma^{-1}\cdot c^{\otimes k}$ into $[\mathrm{End}(V)^{\otimes k}]^{\mathfrak{g}}$.
\begin{itemize}
\item Step 1: Apply the isomorphism $\Theta$ to any $k$ terms of \eqref{sigmack}.
\item Step 2: Rearrange the order such that the odd terms belong to $V$ and the even terms belong to $V^*$.
\item Step 3: Calculate the image through the morphisms $$(V\otimes V^*)^{\otimes k}\longrightarrow \mathrm{End}(V)^{\otimes k}.$$
\end{itemize}
Here, we choose a specific procedure. First, apply the isomorphism $\Theta$ to the even terms of \eqref{sigmack}, and calculate the image through the morphism $$(V\otimes V^*)^{\otimes k}\longrightarrow \mathrm{End}(V)^{\otimes k},$$ which is denoted by $\theta_{\sigma^{-1}}$.
Then 
\begin{align*}
\theta_{\sigma^{-1}}=\mathop{\sum}\limits_{\mathbf{J}\in W}\varepsilon(\mathbf{J}^{e})\varepsilon(\mathbf{J}_{\sigma}^e)\gamma(\mathbf{J},\sigma)e_{j_{\sigma(1)}j_{\sigma(2)}'}\otimes \cdots \otimes e_{j_{\sigma(2k-1)}j_{\sigma(2k)}'}.
\end{align*}
\begin{remark}\label{sigmaBrauer}
\begin{itemize}
    \item[(1).] The element $c^{\otimes k}$ can be interpreted as an element of $\mathrm{Hom}(\mathbb{C},V^{\otimes 2k})$. It can be visually represented by a $(0,2k)$-Brauer diagram, which consists of a horizontal line with $2k$ dots and $k$ arcs:
$$
\begin{tikzpicture}[scale=1,thick,>=angle 90]
\begin{scope}[xshift=4cm]
\draw (2.8,1) to  [out=-90,in=-90] +(0.6,0);
\draw (4,1) to [out=-90, in=-90] +(0.6,0);
\node at (5.5,0.9) {$\cdots$};
\draw (6.4,1) to [out=-90, in=-90] +(0.6,0);
\end{scope}.
\end{tikzpicture}
$$ 
The special procedure can be interpreted as shifting all even points downwards and moving one step to the left. For example, the above Brauer diagram turns into the following $(k,k)$-Brauer diagram through the special procedure:
$$ 
\begin{tikzpicture}[scale=1,thick,>=angle 90]
\begin{scope}[xshift=4cm]
\draw (2.8,1) to  [out=-90,in=90] +(0,-1);
\draw (4,1) to [out=-90, in=90] +(0,-1);
\node at (5.2,0.5) {$\cdots$};
\draw (6.4,1) to [out=-90, in=90] +(0,-1);
\end{scope}
\end{tikzpicture}
$$ 
\item[(2).] The element $\sigma\cdot c^{\otimes k}$ can be represented by a $(0,2k)$-Brauer diagram, which consists of $2k$ dots in a horizontal line and $k$ arcs, and the $s$-th arc connects $\sigma(2s-1)$-dot and $\sigma(2s)$-th dot for $s=1,2,\cdots,k$.
Through the special procedure, the  $(0,2k)$-Brauer diagram turn into the a $(k,k)$-Brauer diagram, denoted by $d_{\sigma}$. For further information regarding the Brauer diagram, we refer the reader to \cite{LZ12, LZ15, LZ24}.

For example, if $\sigma=(234)(56)\in\mathfrak{S}_6$, then $\sigma\cdot c^{\otimes k}$ can be represented by $(0,2k)$-Brauer diagram
$$
\begin{tikzpicture}[scale=1,thick,>=angle 90]
\begin{scope}[xshift=4cm]
\node at (-3.5,0.5)[color=red] {$(234)(56)$};
\draw (-2.4,0) to  [out=-90,in=-90] +(0.6,0);
\draw (-1.2,0) to [out=-90, in=-90] +(0.6,0);
\draw (0,0) to [out=-90, in=-90] +(0.6,0);
\draw (-2.4,0)[color=red] to [out=90,in=-90] +(0,1);
\draw (0.6,0)[color=red] to [out=90,in=-90] +(-0.6,1);
\draw (-1.8,0)[color=red] to [out=90,in=-90] +(0.6,1);
\draw (-1.2,0)[color=red] to [out=90,in=-90] +(0.6,1);
\draw (-0.6,0)[color=red] to [out=90,in=-90] +(-1.2,1);
\draw (0,0)[color=red] to [out=90,in=-90] +(0.6,1);
\node at (1.4,0.5) {$=$};
\draw (2.2,0.6) to  [out=-90,in=-90] +(1.2,0);
\draw (2.8,0.6) to [out=-90, in=-90] +(1.2,0);
\draw (4.6,0.6) to [out=-90, in=-90] +(+0.6,0);
\end{scope}
\end{tikzpicture}
$$
and it turn into the $(k,k)$-Brauer diagram
$$
\begin{tikzpicture}[scale=1,thick,>=angle 90]
\begin{scope}[xshift=4cm]
\draw (2.2,1) to  [out=-90,in=-90] +(0.8,0);
\draw (2.2,0) to [out=90, in=90] +(0.8,0);
\draw (3.8,1) to [out=-90, in=90] +(0,-1);
\end{scope}
\end{tikzpicture}
$$
\end{itemize}
\end{remark}

\begin{lemma}\label{tildetau}
For every $g\in\mathfrak{S}_k$, we have $\gamma(\mathbf{J}^{o}+\mathbf{J}^{e},g)=\gamma(\mathbf{J},\overline{g})$.
\end{lemma}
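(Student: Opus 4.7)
My plan is to exploit the characterization of $\gamma(\mathbf{v},\sigma)$ as the sign that arises when rearranging a product of homogeneous elements in a supercommutative superalgebra — the same device the paper already uses in the proof of Lemma~\ref{eq::gammaprelation}. The identity then falls out by applying this characterization twice: once at the level of the $2k$ slots permuted by $\overline{g}$, and once at the level of the $k$ paired blocks permuted by $g$.

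Concretely, I would introduce a free supercommutative superalgebra $A$ on homogeneous generators $a_1,a_2,\ldots,a_{2k}$ with $|a_i|=|j_i|$. Applying the defining identity $v_1\cdots v_m = \gamma(\mathbf{v},\sigma)v_{\sigma(1)}\cdots v_{\sigma(m)}$ to the permutation $\overline{g}\in\mathfrak{S}_{2k}$ and using the definition of $\overline{g}$ to group consecutive pairs yields
$$a_1 a_2 \cdots a_{2k} \;=\; \gamma(\mathbf{J},\overline{g})\, a_{\overline{g}(1)}\cdots a_{\overline{g}(2k)} \;=\; \gamma(\mathbf{J},\overline{g})\prod_{s=1}^{k} a_{2g(s)-1}a_{2g(s)}.$$

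Next, set $b_s := a_{2s-1}a_{2s}$, which is homogeneous of parity $|b_s|=|j_{2s-1}|+|j_{2s}|$, so that the parity vector of $(b_1,\ldots,b_k)$ is precisely $\mathbf{J}^o+\mathbf{J}^e$. Since the $b_s$ are homogeneous elements of the supercommutative algebra $A$, the same defining identity applied now to $g\in\mathfrak{S}_k$ gives
$$b_1 b_2 \cdots b_k \;=\; \gamma(\mathbf{J}^o+\mathbf{J}^e,\,g)\, b_{g(1)} b_{g(2)} \cdots b_{g(k)}.$$
Both computations evaluate to $a_1 a_2 \cdots a_{2k}$ on the left, and their right-hand sides differ only by the two signs in front of the common monomial $b_{g(1)}\cdots b_{g(k)} = \pm\, a_1\cdots a_{2k}$, which is nonzero in the free supercommutative algebra. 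Cancelling this monomial produces $\gamma(\mathbf{J},\overline{g}) = \gamma(\mathbf{J}^o+\mathbf{J}^e,\,g)$.

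I do not anticipate a genuine obstacle. The only step requiring a moment of care is the observation that the regrouping $a_{2g(s)-1}a_{2g(s)} = b_{g(s)}$ is an on-the-nose equality in $A$ (no sign is introduced, since this is merely the definition of $b_{g(s)}$ evaluated at index $g(s)$, with no reordering taking place within a pair). A purely combinatorial alternative would be to count inversions directly — each inversion $s<t$ with $g(s)>g(t)$ of $g$ corresponds to exactly four inversions of $\overline{g}$, namely $(2s{-}1,2t{-}1), (2s{-}1,2t), (2s,2t{-}1), (2s,2t)$, whose four sign contributions multiply to $(-1)^{(|j_{2g(s)-1}|+|j_{2g(s)}|)(|j_{2g(t)-1}|+|j_{2g(t)}|)}$ — but the algebraic route above is cleaner and avoids the bookkeeping.
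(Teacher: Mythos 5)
Your proof is correct and takes essentially the same route as the paper's: both introduce a free supercommutative superalgebra on generators $a_1,\ldots,a_{2k}$ with $|a_i|=|j_i|$, set $b_s=a_{2s-1}a_{2s}$, and compare the two reorderings of $a_1\cdots a_{2k}$ induced by $g\in\mathfrak{S}_k$ acting on the $b$'s versus $\overline{g}\in\mathfrak{S}_{2k}$ acting on the $a$'s. The brief inversion-counting alternative you sketch at the end is a valid variant but is not what the paper does.
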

\begin{proof}
Suppose $A$ is a free commutative superalgebra generated by $a_1,\cdots,a_{2k}$ with $|a_i|=|j_i|$ for all $1\leqslant i\leqslant 2k$. Then
$$a_1a_2\cdots a_{2k}=\gamma(\mathbf{J},\sigma)a_{\sigma(1)}a_{\sigma(2)}\cdots a_{\sigma(2k)},$$
for all $\sigma\in \mathfrak{S}_{2k}$. Let $b_i=a_{2i-1}a_{2i}$ for all $1\leqslant i\leqslant k$ and $\mathbf{J}'=(j_1',\cdots,j_{k}')$ with $|j_i'|=|b_i|$. Then $|b_i|=|a_{2i-1}|+|a_{2i}|$ and hence $|\mathbf{J}'|=|\mathbf{J}^{o}|+|\mathbf{J}^e|$. Therefore,
\begin{align*}
&a_1a_2\cdots a_{2k}=b_1b_2\cdots b_k =\gamma(\mathbf{J}',g)b_{g(1)}b_{g(2)}\cdots b_{g(k)}\\
=&\gamma(\mathbf{J}^o+\mathbf{J}^e,g)a_{2g(1)-1}a_{2g(1)} a_{2g(2)-1}a_{2g(2)}\cdots a_{2g(k)-1}a_{2g(k)}\\
=&\gamma(\mathbf{J}^o+\mathbf{J}^e,g)a_{\overline{g}(1)}a_{\overline{g}(2)}\cdots a_{\overline{g}(2k)},
\end{align*}
for all $g\in \mathfrak{S}_k$.
This means $\gamma(\mathbf{J}^o+\mathbf{J}^e,g)=\gamma(\mathbf{J},\overline{g})$ for all $g\in \mathfrak{S}_k$.
\end{proof}
Recall that $\mathfrak{z}_{\sigma}=\eta'\circ\pi(\theta_{\sigma})$ for all $\sigma\in \mathfrak{S}_{2k}$.
\begin{proposition}\label{osprelation}
The equation $\psi\circ\eta\circ\pi (\theta_{\sigma})=\frac{1}{k!}\mathop{\sum}\limits_{\tau\in\mathfrak{S}_k}\mathfrak{z}_{\overline{\tau}\sigma}$ holds in $\mathcal{Z}(\mathfrak{osp}_{m|2n})$ for all $\sigma\in \mathfrak{S}_{2k}$.
\end{proposition}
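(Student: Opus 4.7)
The plan is to mirror the strategy of Theorem~\ref{etaeta'} very closely, the essential new ingredient being Lemma~\ref{tildetau} which links the sign function $\gamma$ for permutations in $\mathfrak{S}_k$ acting on the $k$ tensor factors of $\mathrm{End}(V)$ to $\gamma$ for the lifted permutations $\overline{\tau}\in\mathfrak{S}_{2k}$ acting on the $2k$ tensor factors of $V$. First I would expand $\theta_{\sigma^{-1}}$ from the explicit formula derived from \eqref{sigmack}, apply $\pi=\tilde{\pi}^{\otimes k}$ and $\eta$, and then $\psi$ using \eqref{psi}. This produces a double sum over $\tau\in\mathfrak{S}_k$ and $\mathbf{J}\in W$ whose typical product of generators is
$$\tilde{\pi}(e_{j_{\sigma(\overline{\tau}(1))}j'_{\sigma(\overline{\tau}(2))}})\cdots\tilde{\pi}(e_{j_{\sigma(\overline{\tau}(2k-1))}j'_{\sigma(\overline{\tau}(2k))}})\in\mathrm{U}(\mathfrak{osp}_{m|2n}),$$
which is precisely the product appearing in $\mathfrak{z}_{(\sigma\overline{\tau})^{-1}}=\eta'\circ\pi(\theta_{(\sigma\overline{\tau})^{-1}})$ by the same expansion applied to $\overline{\tau}\in\mathfrak{S}_{2k}$.

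The core of the argument is then matching the scalar coefficients. On the left I read the sign $\varepsilon(\mathbf{J}^e)\varepsilon(\mathbf{J}^e_\sigma)\gamma(\mathbf{J},\sigma)\gamma(\mathbf{x},\tau)$, where $\mathbf{x}$ is the parity vector of the $k$ supersymmetrized tensors; an easy check shows $\mathbf{x}=\mathbf{J}^o_\sigma+\mathbf{J}^e_\sigma$, so Lemma~\ref{tildetau} rewrites $\gamma(\mathbf{x},\tau)$ as $\gamma(\mathbf{J}_\sigma,\overline{\tau})$, and then \eqref{gamma} collapses $\gamma(\mathbf{J},\sigma)\gamma(\mathbf{J}_\sigma,\overline{\tau})$ into $\gamma(\mathbf{J},\sigma\overline{\tau})$. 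On the right, $\mathfrak{z}_{(\sigma\overline{\tau})^{-1}}$ carries the sign $\varepsilon(\mathbf{J}^e)\varepsilon(\mathbf{J}^e_{\sigma\overline{\tau}})\gamma(\mathbf{J},\sigma\overline{\tau})$. Equality of the two signs reduces to $\varepsilon(\mathbf{J}^e_\sigma)=\varepsilon(\mathbf{J}^e_{\sigma\overline{\tau}})$, which is immediate from the definition \eqref{tilde{g}}: $\overline{\tau}$ preserves the partition $\{1,3,\ldots,2k-1\}\sqcup\{2,4,\ldots,2k\}$, so the multisets $\{j_{\sigma(2)},\ldots,j_{\sigma(2k)}\}$ and $\{j_{\sigma(\overline{\tau}(2))},\ldots,j_{\sigma(\overline{\tau}(2k))}\}$ coincide and $\varepsilon(\cdot)$ is a product of factors depending only on the multiset of indices. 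A final substitution $\tau\mapsto\tau^{-1}$ (using $\overline{\tau^{-1}}=\overline{\tau}^{-1}$) together with $\sigma\mapsto\sigma^{-1}$ converts the identity $\psi\circ\eta\circ\pi(\theta_{\sigma^{-1}})=\frac{1}{k!}\sum_\tau\mathfrak{z}_{(\sigma\overline{\tau})^{-1}}$ into the claimed $\frac{1}{k!}\sum_\tau\mathfrak{z}_{\overline{\tau}\sigma}$.

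The main obstacle is purely combinatorial: the bookkeeping of the four distinct contributions to the overall sign, namely $\gamma(\mathbf{J},\sigma)$ intrinsic to $\theta_\sigma$, the symmetrization sign $\gamma(\mathbf{x},\tau)$ coming from $\psi$, the Brauer-origin factors $\varepsilon(\mathbf{J}^e)\varepsilon(\mathbf{J}^e_\sigma)$ coming from the identification $\Theta\colon V\simeq V^*$, and any $p(\cdot,\cdot)$ correction arising in invocations of Lemma~\ref{eq::gammaprelation}. Conceptually, the reason left-multiplication by $\overline{\tau}$ replaces the conjugation $\tau^{-1}\sigma\tau$ that appears in Theorem~\ref{etaeta'} is that the supersymmetrization over $\mathfrak{S}_k$ acting on the $k$ factors of $\mathrm{End}(V)$ corresponds, under the visual picture of the $(k,k)$-Brauer diagram $d_\sigma$ of Remark~\ref{sigmaBrauer}, to freely permuting the $k$ arcs; this permutation lifts to $\overline{\tau}\in\mathfrak{S}_{2k}$ applied to the $2k$ endpoints of $V^{\otimes 2k}$, giving left multiplication rather than conjugation. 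Once this structural observation is made, the proof is a direct calculation parallel to that of Theorem~\ref{etaeta'}.
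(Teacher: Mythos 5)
Your proposal follows exactly the paper's own route: expand $\theta_{\sigma^{-1}}$ via the explicit formula, apply $\psi$ to get a $\tau$-sum with a sign $\gamma(\mathbf{J}_\sigma^o+\mathbf{J}_\sigma^e,\tau)$, invoke Lemma~\ref{tildetau} to rewrite this as $\gamma(\mathbf{J}_\sigma,\overline{\tau})$, absorb it by \eqref{gamma} into $\gamma(\mathbf{J},\sigma\overline{\tau})$, check $\varepsilon(\mathbf{J}_\sigma^e)=\varepsilon(\mathbf{J}_{\sigma\overline{\tau}}^e)$ since $\overline{\tau}$ only permutes even positions among themselves, and finally relabel $\sigma\mapsto\sigma^{-1}$, $\tau\mapsto\tau^{-1}$. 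This is correct and matches the paper's proof step for step (and you are even a bit more explicit about why the $\varepsilon$-factors are unaffected).
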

\begin{proof}
By direct calculation, we have 
\begin{align*}
&\psi\circ\eta\circ\pi(\theta_{\sigma^{-1}})\\
=&\mathop{\sum}\limits_{\mathbf{J}\in W}\varepsilon(\mathbf{J}^e)\varepsilon(\mathbf{J}_{\sigma}^e)\gamma(\mathbf{J},\sigma)\frac{1}{k!}\mathop{\sum}\limits_{\tau\in\mathfrak{S}_k}\gamma(\mathbf{J}_{\sigma}^o+\mathbf{J}_{\sigma}^e,\tau)
F_{j_{\sigma(2\tau(1)-1)}j_{\sigma(2\tau(1))}'}\otimes \cdots \otimes F_{j_{\sigma(2\tau(k)-1)}j_{\sigma(2\tau(k))}'}\\
=&\frac{1}{k!}\mathop{\sum}\limits_{\tau\in\mathfrak{S}_k}\mathop{\sum}\limits_{\mathbf{J}\in W}\varepsilon(\mathbf{J}^e)\varepsilon( \mathbf{J}_{\sigma\overline{\tau}}^e)\gamma(\mathbf{J},\sigma\overline{\tau}) F_{j_{\sigma\overline{\tau}(1)}j_{\sigma\overline{\tau}(2)}'}\otimes \cdots \otimes F_{j_{\sigma\overline{\tau}(2k-1)}j_{\sigma\overline{\tau}(2k)}'}\\
=&\frac{1}{k!}\mathop{\sum}\limits_{\tau\in\mathfrak{S}_k}\mathfrak{z}_{(\sigma\overline{\tau})^{-1}}.
\end{align*}
where the second equality holds by \eqref{gamma} and Lemma \ref{tildetau}.
\end{proof}
We conclude that the set $\big\{\psi\circ\eta\circ\pi(\theta_{\sigma})\big|\sigma\in \mathfrak{S}_k,k\in \mathbb{Z}_+\big\}$ spans the center $\mathcal{Z}(\mathfrak{osp}_{2m+1|2n})$. Here, $\eta\colon T(\mathfrak{osp}_{2m+1|2n})\longrightarrow S(\mathfrak{osp}_{2m+1|2n})$  is the split $\mathfrak{osp}_{2m+1|2n}$-module homomorphism and $\psi\colon S(\mathfrak{osp}_{2m+1|2n})\longrightarrow  \mathrm{U}(\mathfrak{osp}_{2m+1|2n})$ is the $\mathfrak{osp}_{2m+1|2n}$-module isomorphism, as shown in \eqref{split} and \eqref{psi}. According to 
Proposition \ref{osprelation},  the $\mathfrak{osp}_{2m+1|2n}$-invariant in $\mathrm{U}(\mathfrak{osp}_{2m+1|2n})$ can be spanned by the set $\big\{\mathfrak{z}_{\sigma}\big|\sigma\in \mathfrak{S}_k,k\in \mathbb{Z}_+\big\}$. Therefore, the restriction of $\eta'$ on $T(\mathfrak{osp}_{2m+1|2n})^{\mathfrak{osp}_{2m+1|2n}}$ is surjective. 

The rest of this subsection is devoting to establish the Gelfand invariants for ortho-symplectic Lie superalgebra $\mathfrak{osp}_{m|2n}$. Obviously, 
 $$F_{ij}=-(-1)^{|j|(|i|+|j|)}\varepsilon_i\varepsilon_jF_{j'i'},$$ 
 and the elements $F_{ij}$ satisfy the supercommutation relations
\begin{align*}
\left[F_{ij}, F_{kl}\right]
=\delta_{jk}F_{il}-(-1)^{(|i|+|j|)(|k|+|l|)}\delta_{il}F_{kj}-(-1)^{|j|(|i|+|j|)}\varepsilon_i\varepsilon_j\delta_{ki'}F_{j'l}+(-1)^{(|i|+|j|)|k|}\varepsilon_i\varepsilon_j\delta_{lj'}F_{ki'},
\end{align*}
which is equivalent to 
\begin{align}\label{eq:osppresent}
\widehat{F}_1\widehat{F}_2-\widehat{F}_2\widehat{F}_1=(P-Q)\widehat{F}_2-\widehat{F}_2(P-Q),
\end{align}
where $\widehat{F}=[(-1)^{|i|}F_{ij}]=\mathop{\sum}\limits_{i,j}(-1)^{|i||j|+|i|+|j|}F_{ij}$ and $Q=\mathop{\sum}\limits_{i,j}(-1)^{|i||j|+|i|+|j|}\varepsilon_i\varepsilon_j e_{ij}\otimes e_{i'j'}\in \mathrm{End}(V)^{\otimes 2}$. By a similar proof of \cite[Theorem 5.3.1]{M18}, we obtain the following theorem.
\begin{theorem}\label{Gelfandosp}
For any $\mathfrak{osp}_{m|2n}$-invariant $S\in \mathrm{End}(V)^{\otimes k}$ and $u_1,\cdots,u_k\in \mathbb{C}$. Then the element
\begin{align*}
\mathrm{Str}_{1,\cdots,k}(u_1+\widehat{F}_1)\cdots (u_k+\widehat{F}_k)S
\end{align*}
belongs to the center $\mathcal{Z}(\mathfrak{osp}_{m|2n})$.
\end{theorem}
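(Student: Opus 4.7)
The plan is to adapt the approach of Molev \cite[Theorem 5.3.1]{M18} for the classical orthogonal and symplectic Lie algebras to the super setting, using the matrix presentation \eqref{eq:osppresent} as the main engine. Denote the candidate central element by $Z = \mathrm{Str}_{1,\ldots,k}(u_1+\widehat{F}_1)\cdots (u_k+\widehat{F}_k)S$. Centrality of $Z$ in $\mathrm{U}(\mathfrak{osp}_{m|2n})$ is equivalent to the matrix-level identity $\widehat{F}_0(1\otimes Z) = (1\otimes Z)\widehat{F}_0$ in $\mathrm{End}(V)\otimes \mathrm{U}(\mathfrak{osp}_{m|2n})$, since the entries of this commutator are precisely the brackets $[F_{ab},Z]$. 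I will verify this inside the larger algebra $\mathrm{End}(V)^{\otimes(k+1)}\otimes \mathrm{U}(\mathfrak{osp}_{m|2n})$, where $\widehat{F}_0$ acts in an auxiliary $0$-th copy of $\mathrm{End}(V)$.

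The first main step is to push $\widehat{F}_0$ through the product $(u_1+\widehat{F}_1)\cdots (u_k+\widehat{F}_k)$ factor by factor, via the consequence $[\widehat{F}_0,\widehat{F}_i] = (P_{0i}-Q_{0i})\widehat{F}_i - \widehat{F}_i(P_{0i}-Q_{0i})$ of \eqref{eq:osppresent}. A telescoping induction on $k$, completely parallel to the $\mathfrak{gl}_{m|n}$ computation that underlies Theorem \ref{Gelfandgl}, will produce an identity of the shape
$$\bigl[\widehat{F}_0,\,(u_1+\widehat{F}_1)\cdots(u_k+\widehat{F}_k)\bigr] = \sum_{i=1}^{k} \bigl\{(P_{0i}-Q_{0i})\,\Pi^{(i)} - \Pi^{(i)}(P_{0i}-Q_{0i})\bigr\},$$
where each $\Pi^{(i)}$ is obtained from the full product by suitable insertion of $P_{0i}-Q_{0i}$ at the $i$-th slot. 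The second step is to encode the $\mathfrak{osp}_{m|2n}$-invariance of $S$ as the commutation identity $\bigl[\sum_{i=1}^{k}(P_{0i}-Q_{0i}),\,1_{0}\otimes S\bigr]=0$ in $\mathrm{End}(V)^{\otimes(k+1)}$; this rests on the observation that $P-Q$ is, up to a scalar, the image in $\mathrm{End}(V)^{\otimes 2}$ of the Brauer generator $s_{1}-e_{1}$ whose diagonal $k$-fold sum implements the action of $\mathfrak{osp}_{m|2n}$ through the Schur-Sergeev duality of Theorem \ref{thm:sw}.

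The final step is to substitute the two identities into $\mathrm{Str}_{1,\ldots,k}$ of the commutator and show everything cancels. The $P_{0i}$ contributions vanish by the cyclicity of the partial supertrace $\mathrm{Str}_i$, exactly as in the classical proof. The main obstacle, absent in the $\mathfrak{gl}_{m|n}$ case of Theorem \ref{Gelfandgl}, is the careful bookkeeping of the $Q_{0i}$ terms: unlike the super-permutation $P$, the operator $Q$ is a rank-one contraction along the bilinear form $B$, and moving it past a partial supertrace requires the ``Brauer-type'' identities $Q_{0i}^{2}=(m-2n)Q_{0i}$, $P_{0i}Q_{0i}=Q_{0i}=Q_{0i}P_{0i}$, and $\mathrm{Str}_{i}\,Q_{0i}=I_{0}$, together with the correct super signs induced by the supertranspose $e_{ij}^{\mathrm{st}}=(-1)^{(|i|+|j|)|i|}e_{ji}$. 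Once these auxiliary identities are established with the right parities, the residual $Q$ contributions cancel by the same cyclicity mechanism as the $P$ terms, and the commutator $[\widehat{F}_0,1\otimes Z]$ vanishes, proving that $Z\in\mathcal{Z}(\mathfrak{osp}_{m|2n})$.
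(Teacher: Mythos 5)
Your proposal follows the same route as the paper, which simply defers to Molev's argument for Theorem 5.3.1 in \cite{M18} (``by a similar proof of \cite[Theorem 5.3.1]{M18}''), and you correctly identify the ingredients that require super-adaptation: the matrix relation \eqref{eq:osppresent}, the telescoping commutator $[\widehat{F}_0,\,\Pi_1\cdots\Pi_k]$, the invariance condition on $S$ expressed through $P_{0i}-Q_{0i}$, and the auxiliary identities for $Q$ with their super signs. One remark: in your second step the justification for $\bigl[\sum_{i=1}^k(P_{0i}-Q_{0i}),\,1_0\otimes S\bigr]=0$ via ``$P-Q$ is the image of the Brauer generator $s_1-e_1$'' is misdirected. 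The Brauer generators on slots $1,\ldots,k$ \emph{commute} with $S$ by Schur-Sergeev duality, but $P_{0i}-Q_{0i}$ lives on the auxiliary $0$-th slot and is not part of that centralizer picture. The correct observation is that $P-Q$ on $V\otimes V$ coincides (up to normalization by the super Killing form) with the split Casimir $\sum_a x_a\otimes x^a$ for dual bases of $\mathfrak{osp}_{m|2n}$, so $\sum_i(P_{0i}-Q_{0i}) = \sum_a x_a^{(0)}\otimes\bigl(\sum_i x^{a,(i)}\bigr)$, and since the $x_a^{(0)}$ are linearly independent, commutation with $1_0\otimes S$ is literally equivalent to $S$ being killed by the adjoint action of each $x^a$. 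Once that is repaired, your proposal is essentially the paper's.
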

If we let $u_1=\cdots=u_k=0$, then the element constructed in the above theorem coincides with $\eta'\circ \pi^{\otimes k}(S^{\mathrm{st}})$  up to a scalar by a similar computation with \eqref{strgl}.

The top degree component of the polynomial $\gamma_{-\rho}\circ\zeta(\mathrm{Str}\widehat{F}^{2k})$ coincides with $2(h_1^{2k}+\cdots +h_m^{2k})+(-1)^{k-1}2(h_{m+1}^{2k}+\cdots+h_{m+n}^{2k})$. Therefore, the Gelfand invariants $\mathrm{Str}\widehat{F}^{2k}$ with $k\geqslant 1$ generate the algebra $\mathrm{U}(\mathfrak{osp}_{m|2n})^{\mathrm{OSP}(V)}$.

\begin{corollary}
The elements $\mathrm{Str}\widehat{F}^{2k}$ with $k\geqslant 1$ generate the center of $\mathcal{Z}(\mathfrak{osp}_{2m+1|2n})$. 
\end{corollary}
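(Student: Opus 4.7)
The plan is to transfer the problem to the Harish-Chandra side and use the supersymmetric power sum generation of $J(\mathfrak{h})$. By the Gorelik-Kac-Sergeev theorem recalled above, the Harish-Chandra homomorphism
$$\gamma_{-\rho}\circ\zeta\colon\mathcal{Z}(\mathfrak{osp}_{2m+1|2n})\longrightarrow J(\mathfrak{h})=I(h_1^2,\ldots,h_m^2;(h'_1)^2,\ldots,(h'_n)^2)$$
is an algebra isomorphism. Combined with the theorem that $I(\mathcal{X}_m;\mathcal{Y}_n)$ is generated by the supersymmetric power sums $p_{m,n}^{(r)}$, one obtains that $J(\mathfrak{h})$ is generated by the elements
$$q_r:=h_1^{2r}+\cdots+h_m^{2r}+(-1)^{r-1}\bigl((h'_1)^{2r}+\cdots+(h'_n)^{2r}\bigr),\quad r\geqslant 1.$$

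First I would make precise the statement, already sketched in the paragraph preceding the corollary, that the top-degree component of $\gamma_{-\rho}\circ\zeta(\mathrm{Str}\,\widehat{F}^{2k})$ equals $2q_k$. For this, using the matrix presentation \eqref{eq:osppresent}, expand $\mathrm{Str}\,\widehat{F}^{2k}$ in the PBW basis, keep only the terms whose images under $\zeta$ are monomials in the Cartan elements $F_{ii}$, and verify that they sum to $2(h_1^{2k}+\cdots+h_m^{2k})+(-1)^{k-1}2((h'_1)^{2k}+\cdots+(h'_n)^{2k})$ modulo terms of strictly lower filtration degree. I would also note that the odd powers $\mathrm{Str}\,\widehat{F}^{2k-1}$ need not be considered separately, since the antisymmetry $F_{ij}=-(-1)^{|j|(|i|+|j|)}\varepsilon_i\varepsilon_j F_{j'i'}$ together with the supertrace identity forces their top-degree Harish-Chandra images to vanish; this is consistent with the fact that $J(\mathfrak{h})$ involves only even powers of the $h_i,h'_j$.

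Next I would run the standard filtration-and-induction argument. Filter $\mathcal{Z}(\mathfrak{osp}_{2m+1|2n})$ by the induced PBW degree, and observe that $\gamma_{-\rho}\circ\zeta$ is compatible with this filtration in the sense that the top-degree component of $\gamma_{-\rho}\circ\zeta(z)$ for $z$ of degree $d$ lies in the degree-$d$ part of $J(\mathfrak{h})$. Given any $z\in\mathcal{Z}(\mathfrak{osp}_{2m+1|2n})$ of degree $2d$, write its top-degree image as a polynomial in the $q_r$'s with $r\leqslant d$, substitute $q_r$ by $\tfrac12\mathrm{Str}\,\widehat{F}^{2r}$, and subtract the corresponding polynomial in the generators from $z$; the remainder has strictly smaller filtration degree, and induction on $d$ concludes. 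The fact that the Harish-Chandra homomorphism is injective on $\mathcal{Z}(\mathfrak{osp}_{2m+1|2n})$ then ensures that the polynomial expression obtained on the algebra side actually recovers $z$.

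The main obstacle I expect is the verification in the first step: computing the Harish-Chandra image of $\mathrm{Str}\,\widehat{F}^{2k}$ cleanly enough to identify the top-degree component with $2q_k$. This is largely a careful bookkeeping of signs $\varepsilon_i$, parities $|i|$, and the definition of the supertrace, carried out by singling out in $\mathrm{Str}\,\widehat{F}^{2k}=\mathrm{Str}_{1,\ldots,2k}(P_\sigma\widehat{F}_1\cdots\widehat{F}_{2k})$ with $\sigma=(1\,2\,\cdots\,2k)$ the summands whose pair-indices are forced to be diagonal by the requirement that only Cartan elements survive the projection $\zeta$; everything else is absorbed into lower-filtration corrections. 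Once this computation is in place, the two theorems cited at the outset finish the argument immediately.
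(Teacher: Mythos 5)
Your proposal is correct and follows essentially the same route as the paper: identify $\mathcal{Z}(\mathfrak{osp}_{2m+1|2n})$ with $J(\mathfrak{h})$ via the Gorelik--Kac--Sergeev Harish--Chandra isomorphism, observe that $J(\mathfrak{h})$ is generated by the supersymmetric power sums in $h_i^2,(h'_j)^2$, check that the top-degree Harish--Chandra image of $\mathrm{Str}\,\widehat{F}^{2k}$ is $2q_k$, and conclude by the standard filtration-and-induction argument. The paper leaves most of this implicit (the preceding paragraph states the top-degree computation and a one-line conclusion); you merely spell out the filtration step in more detail.
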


\subsection{An alternative proof of Scheunert's result on invariants for $\mathfrak{p}_{n}$}\label{subsectionp}

Scheunert conducted a systematic investigation on the Casimir elements of classical Lie superalgebras. Specifically, he demonstrated that the center of the universal enveloping algebra $\mathrm{U}(\mathfrak{p}_n)$ is trivial by analyzing the invariant supersymmetric multilinear form and utilizing algebraic geometry techniques. Further details can be found in \cite[Proposition 3]{Sc87}. In this subsection, we can simplify the issue of  the triviality of the center $\mathcal{Z}(\mathfrak{p}_n)$ to a basic problem involving the symmetric group. What is even more intriguing is that we can resolve this problem by investigating the properties of the Brauer algebra. Thus, we provide an alternative elementary proof regarding the triviality of the center $\mathcal{Z}(\mathfrak{p}_n)$.

The quadratic element $c=\mathop{\sum}\limits_{i}(-1)^{|i|}e_i\otimes e_{i'}\in V^{\otimes 2}$ is $\mathfrak{p}_{n}$-invariant. So does 
\begin{align}\label{standardinvariant}
c^{\otimes k}=\mathop{\sum}\limits_{I}(-1)^{|i_1|+|i_2|+\cdots+|i_{k}|}e_{i_1}\otimes e_{i_1'}\otimes\cdots\otimes e_{i_k}\otimes e_{i_k'}.
\end{align}

The set $\{\sigma\cdot c^{\otimes k}\}_{\sigma\in\mathfrak{S}_{2k}}$ spans $\big(V^{\otimes 2k}\big)^{\mathfrak{p}_n}$, this follows from work of Deligne–Lehrer–Zhang \cite[Section 5.3]{DLZ18}, also see \cite[Lemma 8.1.4]{Co181}. 

Since $(12)\cdot c=-c$ and the action $\overline{\tau}$ on $c^{\otimes k}$ is invariant  for every $\tau\in \mathfrak{S}_k$, the set $\{\sigma \cdot c^{\otimes k}\}_{c\in \mathrm{B}_k}$ spans $\left(V^{\otimes 2k}\right)^{\mathfrak{p}_{n}}$ where $\mathrm{B}_k$ is a set of representatives of the left coset $\mathfrak{S}_{2k}/H$. Here, $H$ is subgroup of $\mathfrak{S}_{2k}$ defined in subsection \ref{subsectionosp}.

  Let 
$$W=\left\{(j_1,\cdots,j_n)| j_1=j_2',j_3=j_4',\cdots,j_{2k-1}=j_{2k}' \right\}.$$

Denote by $|\mathbf{J}^{o}|=|j_1|+|j_3|+\cdots+|j_{2k-1}|$, then $$c^{\otimes k}=\mathop{\sum}\limits_{\mathbf{J}\in W}(-1)^{|\mathbf{J}^{o}|}e_{j_1}\otimes e_{j_2}\otimes \cdots\otimes e_{j_{2k}}$$ and  
\begin{align}\label{sigmackp}
\sigma^{-1}\cdot c^{\otimes k}=\mathop{\sum}\limits_{\mathbf{J}\in W}(-1)^{|\mathbf{J}^{o}|} \gamma(\mathbf{J},\sigma)e_{j_{\sigma(1)}}\otimes \cdots\otimes e_{j_{\sigma(2k)}}.
\end{align}

Note that $V$ is isomorphic to $V^*$ through the map $\Theta\colon e_i\mapsto e_{i'}^*$. The procedure of transforming \eqref{sigmackp} into $\left[\mathrm{End}(V)^{\otimes k}\right]^{\mathfrak{g}}$ is analogous to the $\mathfrak{osp}_{m|2n}$ case. The $\mathfrak{g}$-invariants in $\mathrm{End}(V)^{\otimes k}$ by the special procedure are 
\begin{align}\label{pcenter}
\theta_{\sigma^{-1}}=\mathop{\sum}\limits_{\mathbf{J}\in W}
(-1)^{|\mathbf{J}^{o}|+|\mathbf{J}_{\sigma}^{o}|} p(\mathbf{1},\mathbf{J}_{\sigma}^o+\mathbf{J}_{\sigma}^e)\gamma(\mathbf{J},\sigma)e_{j_{\sigma(1)}j_{\sigma(2)}'}\otimes \cdots \otimes e_{j_{\sigma(2k-1)}j_{\sigma(2k)}'},
\end{align}
where $\mathbf{1}=(\bar{1},\bar{1},\cdots,\bar{1})\in \mathbb{Z}_2^k$.
\begin{theorem}\label{thm:pncenter}
The center of the universal enveloping algebra $\mathrm{U}(\mathfrak{p}_n)$ is trivial.
\end{theorem}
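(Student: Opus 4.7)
The plan is to follow the framework developed in the preceding subsections and reduce the triviality of $\mathcal{Z}(\mathfrak{p}_n)$ to a combinatorial statement about the symmetric group. By the Schur-Weyl duality for $\mathfrak{p}_n$ and the periplectic Brauer algebra $\mathcal{B}_k^-(0)$ (Theorem \ref{thm:sw}), combined with the factorization through $\pi$ provided by Proposition \ref{le:split}, every element of $\mathcal{Z}(\mathfrak{p}_n)$ arising in degree $k\geqslant 1$ has the form $\psi\circ\eta\circ\pi(S)$ for some $\mathfrak{p}_n$-invariant $S\in\mathrm{End}(V)^{\otimes k}$. Since $(V^{\otimes 2k})^{\mathfrak{p}_n}$ is spanned by $\{\sigma\cdot c^{\otimes k}\}_{\sigma\in\mathfrak{S}_{2k}}$ and $V\cong V^*$ as $\mathfrak{p}_n$-modules through the odd map $\Theta$, the elements $\theta_{\sigma^{-1}}$ in \eqref{pcenter} span $[\mathrm{End}(V)^{\otimes k}]^{\mathfrak{p}_n}$. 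It therefore suffices to show that $\psi\circ\eta\circ\pi(\theta_\sigma)=0$ for every $k\geqslant 1$ and every $\sigma\in\mathfrak{S}_{2k}$.

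To this end, I would first establish a $\mathfrak{p}_n$-analog of Proposition \ref{osprelation}, of the shape
\begin{equation*}
\psi\circ\eta\circ\pi(\theta_\sigma)\;=\;\frac{1}{k!}\sum_{\tau\in\mathfrak{S}_k}\epsilon(\tau,\sigma)\,\mathfrak{z}_{\overline{\tau}\sigma},
\end{equation*}
where $\epsilon(\tau,\sigma)\in\{\pm 1\}$ absorbs the extra signs forced by the relation $(12)\cdot c=-c$, which distinguishes the periplectic from the orthosymplectic setting. The derivation is a direct computation modelled on the proof of Proposition \ref{osprelation}, with careful bookkeeping of the signs introduced by the antisymmetry of the odd bilinear form. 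Combining this with the free supercommutative algebra lift $\mathfrak{j}$ used in the proof of Proposition \ref{sigmacycle} and with the defining relation $G_{ij}=-(-1)^{|j|(|i|+|j|)}G_{j'i'}$ of $\mathfrak{p}_n$, the vanishing of the displayed sum reduces to a single identity in $\mathbb{C}\mathfrak{S}_{2k}$ that is independent of $n$ and of any representation-theoretic data. This reduced identity is exactly the content of Key Lemma \ref{symmetricgroup}; its proof proceeds by translating the sum over $\tau$ into a sum over the $(k,k)$-Brauer diagrams obtained from $\sigma$ via the special procedure of Remark \ref{sigmaBrauer}, and then exhibiting a sign-reversing involution on this diagrammatic data that pairs each configuration with one carrying the opposite sign. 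With the Key Lemma in hand, every $\psi\circ\eta\circ\pi(\theta_\sigma)$ with $k\geqslant 1$ vanishes, and so $\mathcal{Z}(\mathfrak{p}_n)=\mathbb{C}$.

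The principal obstacle is the Key Lemma itself: locating the correct sign-reversing involution on $(k,k)$-Brauer diagrams that is compatible with both the cycle structure of $\sigma$ and the parities of the indices surviving the projection $\pi$. Everything else is formally parallel to the $\mathfrak{osp}_{m|2n}$ analysis of Subsection \ref{subsectionosp}; the genuine novelty lies in the diagrammatic combinatorics, where the antisymmetry of the periplectic contraction produces a sign structure that is absent in the orthosymplectic case.
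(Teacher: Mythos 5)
Your high-level strategy --- push the problem through the Schur--Weyl machinery of the preceding subsections and isolate Key~Lemma~\ref{symmetricgroup} as the combinatorial crux --- is correct, and the observation that the sign $(12)\cdot c=-c$ is where the periplectic case diverges from the orthosymplectic one is on target. However, the specific reduction you sketch is not the one the paper uses, and your description of Key~Lemma~\ref{symmetricgroup} misstates both its content and its proof, leaving a genuine gap. The paper does not establish a $\mathfrak{p}_n$-analog of Proposition~\ref{osprelation} nor work with $\psi\circ\eta\circ\pi(\theta_\sigma)$ as a signed sum $\tfrac{1}{k!}\sum_\tau\epsilon(\tau,\sigma)\mathfrak{z}_{\overline{\tau}\sigma}$ over $\mathfrak{S}_k$ in $\mathrm{U}(\mathfrak{p}_n)$. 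Instead it works directly in $S(\mathfrak{p}_n)$ and shows $\eta\circ\pi(\theta_{\sigma^{-1}})=-\eta\circ\pi(\theta_{\sigma^{-1}})$, hence $=0$, for each $\sigma\in\mathfrak{S}_{2k}$. Starting from \eqref{sgg}, one rewriting uses \eqref{relationp} together with the supercommutation relation \eqref{relationSg} (this is where the extra periplectic sign lives, not in a postulated $\epsilon(\tau,\sigma)$) to obtain \eqref{transform}; independently, because $\sigma\tau\overline{g}\sigma^{-1}\in H$ stabilizes the index set $W$, re-indexing the sum gives \eqref{replace}. Comparing the two shows they differ by the sign $\mathrm{sgn}(\tau_1)\mathrm{sgn}(g)\mathrm{sgn}(g_1)$, which Key~Lemma~\ref{symmetricgroup} forces to be $-1$.

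Key~Lemma~\ref{symmetricgroup} is an \emph{existence} statement --- for every $\sigma\in\mathfrak{S}_{2k}$ there exist $\tau,\tau_1\in K$ and $g,g_1\in\mathfrak{S}_k$ with $\sigma\tau\overline{g}\sigma^{-1}=\tau_1\overline{g_1}$ and $\mathrm{sgn}(\tau_1)\mathrm{sgn}(g)\mathrm{sgn}(g_1)=-1$ --- not a vanishing identity in $\mathbb{C}\mathfrak{S}_{2k}$, and it is not proved by a sign-reversing involution on Brauer diagrams. The paper instead shows the statement is stable under left and right multiplication by $H$, invokes the double-coset decomposition \eqref{doublecoset} of $\mathfrak{S}_{2k}$ by $H$ parametrized by Brauer-diagram types, reduces to a single representative of each type realized by one Brauer cycle, and writes down explicit $\tau,\tau_1,g,g_1$ there. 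Your alternative route is not carried out: the proposed sum formula is never derived, the elements $\mathfrak{z}_{\overline{\tau}\sigma}$ are not linearly independent (so a pairwise sign-reversing cancellation is far from automatic), and the Key Lemma in its actual form does not furnish such an involution. Until the sum formula is established and a genuine vanishing argument is supplied, the proposal does not yield the theorem.
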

\begin{proof}
It is sufficient to prove that $\eta\circ\pi(\theta_{\sigma^{-1}})=-\eta\circ\pi(\theta_{\sigma^{-1}})$ for every $\sigma\in\mathfrak{S}_{2k}$ and $k\in\mathbb{N}$, since $\eta\circ\pi(\theta_{\sigma^{-1}})$ spans all the invariants in $S(\mathfrak{p}_n)$ and $S(\mathfrak{p}_n)$ is isomorphic to $\mathrm{U}(\mathfrak{p}_n)$ as $\mathfrak{p}_n$-modules. 

The relations of $S(\mathfrak{g})$ are 
\begin{align}
    G_{ij}&=-(-1)^{|j|(|i|+|j|)}G_{j'i'},\label{relationp} \\
    G_{ij}G_{kl}&=-(-1)^{(|i|+|j|)(|k|+|l|)}G_{kl}G_{ij},\label{relationSg}
\end{align}
 for all possible $i,j,k,l$. 
We have
\begin{align}\label{sgg}
\eta\circ\pi(\theta_{\sigma^{-1}})=\mathop{\sum}\limits_{\mathbf{J}\in W}
(-1)^{|\mathbf{J}^{o}|+|\mathbf{J}_{\sigma}^{o}|}p(\mathbf{1},\mathbf{J}_{\sigma}^o+\mathbf{J}_{\sigma}^e )\gamma(\mathbf{J},\sigma)G_{j_{\sigma(1)}j_{\sigma(2)}'} \cdots  G_{j_{\sigma(2k-1)}j_{\sigma(2k)}'},
\end{align}
by \eqref{pcenter}.

Recall that $K$ is the subgroup of $\mathfrak{S}_{2k}$ generated by the swaps $(12), (34),\cdots, (2k-1,2k)$. Take $\tau\in K$ and $g\in\mathfrak{S}_k$. The element $\tau$ indicates which terms transform by using relation \eqref{relationp}, and the element $g$ indicates how to transform by using relation \eqref{relationSg}. Specifically, we have
\begin{align}
    G_{j_{\sigma(1)}j_{\sigma(2)}'} \cdots  G_{j_{\sigma(2k-1)}j_{\sigma(2k)}'}=(-1)^{\clubsuit}G_{j_{\sigma\tau(1)}j_{\sigma\tau(2)}'} \cdots  G_{j_{\sigma\tau(2k-1)}j_{\sigma\tau(2k)}'},
\end{align}
where $\clubsuit=l(\tau)\mathop{\sum}\limits_{i\in \tau^>}|j_{\sigma\tau(i+1)}'|(|j_{\sigma\tau(i)}|+|j_{\sigma\tau(i+1)}'|)$, $l(\tau)$ is the length of permutation $\tau$, and $\tau^>=\{i |\tau(i)>i\}$. And
\begin{align}\label{ggg}
    G_{j_{\sigma\tau(1)}j_{\sigma\tau(2)}'} \cdots  G_{j_{\sigma\tau(2k-1)}j_{\sigma\tau(2k)}'}=\gamma(\mathbf{1}+\mathbf{J}_{\sigma\tau}^o+\mathbf{J}_{\sigma\tau}^e,g)G_{j_{\sigma\tau\overline{g}(1)}j_{\sigma\tau\overline{g}(2)}'} \cdots  G_{j_{\sigma\tau\overline{g}(2k-1)}j_{\sigma\tau\overline{g}(2k)}'}.
\end{align}
Note that 
\begin{align}\label{AAA}
    (-1)^{\clubsuit}=(-1)^{\mathop{\sum}\limits_{i\in \tau^>}|j_{\sigma\tau(i)}||j_{\sigma\tau(i+1)}| }\cdot (-1)^{\mathop{\sum}\limits_{i\in \tau^>}|j_{\sigma\tau(i)}|+|j_{\sigma\tau(i+1)}| }=\gamma(\mathbf{J}_{\sigma},\tau)(-1)^{|\mathbf{J}_{\sigma}^{o}|+|\mathbf{J}_{\sigma\tau}^{o}|}.
\end{align}
Therefore, by \eqref{sgg}-\eqref{AAA}
\begin{align}\label{transform}
\eta\circ\pi(\theta_{\sigma^{-1}})
=&\mathop{\sum}\limits_{\mathbf{J}\in W}
(-1)^{|\mathbf{J}^{o}|+|\mathbf{J}_{\sigma\tau}^{o}|} \gamma(\mathbf{J},\sigma)\gamma( \mathbf{J}_{\sigma},\tau)
p(\mathbf{1},\mathbf{J}_{\sigma}^o+\mathbf{J}_{\sigma}^e)\gamma(\mathbf{1}+\mathbf{J}_{\sigma\tau}^o+\mathbf{J}_{\sigma\tau}^e,g )\nonumber \\
&\quad\quad\cdot G_{j_{\sigma\tau\overline{g}(1)}j_{\sigma\tau\overline{g}(2)}'} \cdots  G_{j_{\sigma\tau\overline{g}(2k-1)}j_{\sigma\tau\overline{g}(2k)}'}.
\end{align}
On the other hand, we can replace $i$ by $\sigma\tau\overline{g}\sigma^{-1}(i)$ in \eqref{sgg} for all $i$ if $\sigma\tau\overline{g}\sigma^{-1} \in H$ since the stabilizer of $W$ in $\mathfrak{S}_{2k}$ is $H$. Therefore,
\begin{align}\label{replace}
\eta\circ\pi(\theta_{\sigma^{-1}})
=&\mathop{\sum}\limits_{\mathbf{J}\in W}
(-1)^{|\mathbf{J}_{\sigma\tau\overline{g}\sigma^{-1}}^{o}|+|\mathbf{J}_{\sigma\tau\overline{g}}^{o}|}
p(\mathbf{1},\mathbf{J}_{\sigma\tau\overline{g}}^{o}+\mathbf{J}_{\sigma\tau\overline{g}}^{e})\gamma( \mathbf{J}_{\sigma\tau\overline{g}\sigma^{-1}},\sigma)\nonumber\\
&\quad\quad\cdot G_{j_{\sigma\tau\overline{g}(1)}j_{\sigma\tau\overline{g}(2)}'} \cdots  G_{j_{\sigma\tau\overline{g}(2k-1)}j_{\sigma\tau\overline{g}(2k)}'}.
\end{align}
We have
\begin{align}\label{gammaJ}
\gamma(\mathbf{J}_{\sigma\tau\overline{g}\sigma^{-1}},\sigma)=&\gamma(\mathbf{J},\sigma\tau\overline{g})\gamma(\mathbf{J},\sigma\tau\overline{g}\sigma^{-1})=\gamma(\mathbf{J},\sigma\tau)\gamma(\mathbf{J}_{\sigma\tau},\overline{g})\gamma(\mathbf{J},\sigma\tau\overline{g}\sigma^{-1})\nonumber\\
=&\gamma(\mathbf{J},\sigma)\gamma(\mathbf{J}_{\sigma},\tau)\gamma( \mathbf{J}_{\sigma\tau},\overline{g})\gamma(\mathbf{J},\sigma\tau\overline{g}\sigma^{-1})
\end{align}
by \eqref{gamma}. 

Since $\sigma\tau\overline{g}\sigma^{-1}\in H$, there are unique $\tau_1\in K, g_1\in\mathfrak{S}_k$ such that $\sigma\tau\overline{g}\sigma^{-1}=\tau_1\overline{g_1}$, we have
\begin{align}\label{sgng_1}
\gamma(\mathbf{J},\sigma\tau\overline{g}\sigma^{-1})=\gamma(\mathbf{J},\tau_1\overline{g_1})=\gamma(\mathbf{J}_{\tau_1},\overline{g_1})\gamma(\mathbf{J},\tau_1)=\gamma(\mathbf{J}_{\tau_1},\overline{g_1})=\gamma(\mathbf{J}_{\tau_1}^o+\mathbf{J}_{\tau_1}^e,g_1) =\gamma(\mathbf{1},g_1)
\end{align}
and
\begin{align}
(-1)^{|\mathbf{J}^{o}|+|\mathbf{J}_{\sigma\tau\overline{g}\sigma^{-1}}^{o}|}=(-1)^{|\mathbf{J}^{o}|+|\mathbf{J}_{\tau_1\overline{g_1}}^{o}|}=(-1)^{|\mathbf{J}^{o}|+|\mathbf{J}_{\tau_1}^{o}|}=\mathrm{sgn}(\tau_1)
\end{align}
by \eqref{gamma}, $\mathbf{J}\in W$ and Lemma \ref{tildetau}.
We have
\begin{align}\label{usegammap}
\gamma(\mathbf{1}+\mathbf{J}_{\sigma\tau}^o+\mathbf{J}_{\sigma\tau}^e,g )=p(\mathbf{1},\mathbf{J}_{\sigma\tau}^{o}+\mathbf{J}_{\sigma\tau}^{e} )p(\mathbf{1},\mathbf{J}_{\sigma\tau\overline{g}}^{o}+\mathbf{J}_{\sigma\tau\overline{g}}^{e} )\gamma(\mathbf{1},g)\gamma(\mathbf{J}_{\sigma\tau}^o+\mathbf{J}_{\sigma\tau}^e,g)
\end{align}
by Lemma \ref{eq::gammaprelation}. Since,
\begin{align}\label{sgng}
\gamma(\mathbf{1},g)=\mathrm{sgn}(g),\quad  \gamma(\mathbf{1},g_1)=\mathrm{sgn}(g_1),\text{ and }
(-1)^{|\mathbf{J}_{\sigma\tau}^{o}|}=(-1)^{|\mathbf{J}_{\sigma\tau\overline{g}}^{o}|}.
\end{align}
We have
\begin{align}\label{overlineg}
\gamma(\mathbf{J}_{\sigma\tau},\overline{g})=\gamma(\mathbf{J}_{\sigma\tau}^o+\mathbf{J}_{\sigma\tau}^e,g)\text{ and }
p(\mathbf{1},\mathbf{J}_{\sigma\tau}^{o}+\mathbf{J}_{\sigma\tau}^{e} )=p(\mathbf{1},\mathbf{J}_{\sigma}^{o}+\mathbf{J}_{\sigma}^{e} )
\end{align}
by Lemma \ref{tildetau} and $\tau\in K$, respectively.

Substitute \eqref{gammaJ}-\eqref{overlineg} for \eqref{transform} and \eqref{replace}, we conclude that \eqref{transform} is equal to the negative of \eqref{replace} if and only if $$\mathrm{sgn}(\tau_1)\mathrm{sgn}(g)\mathrm{sgn}(g_1)=-1.$$
This holds due to the following key Lemma.
\end{proof}

\begin{keylemma}\label{symmetricgroup}
For every $\sigma\in \mathfrak{S}_{2k}$, there exists $\tau,\tau_1\in K$ and $g,g_1\in \mathfrak{S}_k$ such that
$(i)~~ \sigma\tau\overline{g}\sigma^{-1}=\tau_1\overline{g_1}$ and $(ii)~~\mathrm{sgn}(\tau_1)\mathrm{sgn}(g)\mathrm{sgn}(g_1)=-1$.
\end{keylemma}
Before proving the lemma, we introduce some notations.

An \textit{$(r,s)$-Brauer diagram} is a graph with $r+s$ dots, which are divided into pairs. It can be graphically represented by placing the $r$ dots on a horizontal line and the $s$ dots on a second horizontal line above the first one. The Brauer diagram consists of $(r + s)/2$ lines that connect the dots belonging to the same pair. We denote the set of all $(r,s)$-Brauer diagrams by $\mathcal{B}_r^s$. Hence, $\mathcal{B}_r^s$ is empty if and only if $r+s$ is odd, and if $r+s$ is even, then the cardinality of $\mathcal{B}_r^s$ is $(r+s-1)!!$. 

Recall that every permutation in symmetric group $\mathfrak{S}_k$ can be represented by a product of disjoint cycles. Add a new symbol $i'$ for each $1\leqslant i\leqslant k$ and define $[i]=[i']=i$ for each $i$, we call the \textit{kind} of $i,j'$ are \textit{different} and the \textit{kind} of $i,j$ or $i',j'$ are \textit{same} for all $1\leqslant i,j\leqslant k$. We call
$$(i_{1,1}i_{1,2}\cdots i_{1,j_1})(i_{2,1}i_{2,2}\cdots i_{2,j_2})\cdots(i_{s,1}i_{s,2}\cdots i_{s,j_s})$$
is \textit{a product of disjoint Brauer cycles} if $\{i_{r,t}\}_{1\leqslant r\leqslant s,1\leqslant t\leqslant j_s}$ is a subset of  $ \{1,2,\cdots, k\}\cup \{1',2',\cdots, k'\}$, $j_1+j_2\cdots+j_s=k$ and the set $\{[i_{r,t}] \}_{1\leqslant r\leqslant s,1\leqslant t\leqslant j_s}$ equal to $\{1,2,\cdots,k\}$.

For a product of disjoint Brauer cycles, we can draw a Brauer diagram as follows:
\begin{itemize}
    \item Step 1: Draw $2k$ dots on two horizontal lines, each of them consisting of $k$-dots and one above the other;
    \item Step 2: If the kind of $i_{1,1}$ and $i_{1,2}$ are the same (different), draw a line connecting the $[i_{1,1}]$-th dot in the lower line and $[i_{1,2}]$-th dot in the higher (lower) line;
    \item Step 3: If the kind of $i_{1,2}$ and $i_{1,3}$ are the same (different) and the $[i_{1,2}]$-th dot in the higher line has been connected, draw a line connecting the $[i_{1,2}]$-th dot in the lower line and the $[i_{1,3}]$-th dot in the higher (lower) line; if the kind of $i_{1,2}$ and $i_{1,3}$ are the same (different) and the $[i_{1,2}]$-th dot in the lower line has been connected, draw a line connecting the $[i_{1,2}]$-th dot in the higher line and the $[i_{1,3}]$-th dot in the lower (higher) line;
    \item Step 4: Repeat step 3 for $(i_{1,3},i_{1,4}),...,(i_{1,j_{1}-1},i_{1,j_{1}}), (i_{1,j_{1}},i_{1,1})$;
    \item Step 5: Repeat steps 2,3 and 4 for the remaining Brauer cycles.
\end{itemize}
For instance, the subsequent Brauer diagram can be represented by $(13)(24')(567')$ or $(6'75')(2'4)(31)$.

$$
\begin{tikzpicture}[scale=1,thick,>=angle 90]
\begin{scope}[xshift=4cm]
\draw (2.8,1) to  [out=-90,in=90] +(1.2,-1.5);
\draw (3.4,1) to [out=-90, in=-90] +(1.2,0);
\draw (2.8,-0.5) to [out=90, in=-90] +(+1.2,+1.5);
\draw (3.4,-0.5) to [out=90, in=90] +(1.2,0);
\draw (5.2,1) to [out=-90, in=-90] +(1.2,0);
\draw (5.2,-0.5) to [out=90, in=-90] +(0.6,1.5);
\draw (5.8,-0.5) to [out=90, in=90] +(0.6,0);
\end{scope}
\end{tikzpicture}
$$

Every $(k,k)$-Brauer diagram can be represented by a product of disjoint Brauer cycles. Suppose the number of Brauer cycles with a length equal to $s$ is $\lambda_s$. We call the \textit{type} of this Brauer diagram $1^{\lambda_1}2^{\lambda_2}\cdots k^{\lambda_k}$. One can calculate that the number of products of disjoint Brauer cycles of a Brauer diagram with type $1^{\lambda_1}2^{\lambda_2}\cdots k^{\lambda_k}$ is $\mathop{\prod}\limits_{i=1}^n(2i)^{\lambda_i}\lambda_i!$ if we arrange the Brauer cycles in ascending order from smallest to largest. Therefore, the number of Brauer diagrams of type $1^{\lambda_1}2^{\lambda_2}\cdots k^{\lambda_k}$ is
$$\frac{2^kk!}{\mathop{\prod}\limits_{i=1}^n(2i)^{\lambda_i}\lambda_i!}.$$
The number of all $(k,k)$-Brauer diagrams is $(2k-1)!!$, so we get the equation
\begin{equation}\label{Brauerequation}
   \mathop{\sum}\limits_{\substack{\lambda_i\geqslant 0\\ \lambda_1+2\lambda_2+\cdots+k\lambda_k=k}} \frac{2^kk!}{\mathop{\prod}\limits_{i=1}^n(2i)^{\lambda_i}\lambda_i!}=(2k-1)!!.
\end{equation}

The $2k$-dots in a $(k,k)$-Brauer diagram are labeled as $1,2,\cdots,2k$ from top to bottom and left to right. We refer to the Brauer diagram $d_{\sigma}$ mentioned in Remark \ref{sigmaBrauer} as a $\sigma$\textit{-Brauer diagram}. Then, $d_{\sigma'}=d_{\sigma}$ if and only if their left cosets $\sigma H$ and $\sigma' H$ are the same. For example, the $\sigma=(267453)$-Brauer diagram $d_{\sigma}$ is labeled as
\begin{figure}[h]
\begin{equation}\label{fig:enter-label}
    \begin{tikzpicture}[scale=1,thick,>=angle 90]
\begin{scope}[xshift=4cm]
\draw (2.8,1) to  [out=-90,in=90] +(1.2,-1.5);
\node (node001) at (2.8,1.2)  {$1$};
\node (node002) at (2.8,-0.7)  {$2$};
\node (node003) at (3.4,1.2)  {$3$};
\node (node004) at (3.4,-0.7)  {$4$};
\node (node005) at (4,1.2)  {$5$};
\node (node006) at (4,-0.7)  {$6$};
\node (node007) at (4.6,1.2)  {$7$};
\node (node008) at (4.6,-0.7)  {$8$};
\draw (3.4,1) to [out=-90, in=-90] +(1.2,0);
\draw (2.8,-0.5) to [out=90, in=-90] +(+1.2,+1.5);
\draw (3.4,-0.5) to [out=90, in=90] +(1.2,0);
\end{scope}
\end{tikzpicture}
\end{equation}
\captionsetup{labelformat=empty}
  \caption{Labeled $(267453)$-Brauer diagram}
\end{figure}
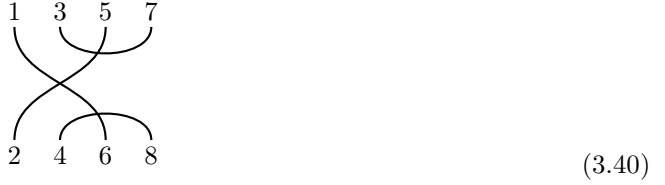

The symmetric group $\mathfrak{S}_{2k}$ acts on the $(k,k)$-Brauer diagram by $\sigma\cdot d_{\sigma'}=d_{\sigma\sigma'}$ for all $\sigma,\sigma'\in\mathfrak{S}_{2k}$. Recall that the subgroup $H$ and $d_{\sigma}=d_{\sigma'}$ if and only if $\sigma H=\sigma' H$. Therefore, the following statements hold: 
\begin{itemize}
\item[(I):] $g\in H$ if and only if  the action of $g$ on the $(k,k)$-Brauer diagram $d_{(1)}$ is invariant.
\item[(II):] $g\in \sigma H\sigma^{-1}$ if and only if the action of $g$ on the $(k,k)$-Brauer diagram $d_{\sigma}$ is invariant.

For a $\sigma$-Brauer diagram, we define its \textit{closure diagram} $\overline{d_{\sigma}}$ by adding $k$ red lines which connect the $s$-th dot in the lower line and the $s$-th dot in the higher line for all $1\leqslant s\leqslant k$. For example, the closure diagram of the $(267453)$-Brauer diagram in Figure \ref{fig:enter-label} is
\begin{figure}[h]
\begin{equation}\label{fig:closure}
\begin{tikzpicture}[scale=1,thick,>=angle 90]
\begin{scope}[xshift=4cm]
\draw (2.8,1) to  [out=-90,in=90] +(1.2,-1.5);
\node (node001) at (2.8,1.2)  {$1$};
\node (node002) at (2.8,-0.7)  {$2$};
\node (node003) at (3.4,1.2)  {$3$};
\node (node004) at (3.4,-0.7)  {$4$};
\node (node005) at (4,1.2)  {$5$};
\node (node006) at (4,-0.7)  {$6$};
\node (node007) at (4.6,1.2)  {$7$};
\node (node008) at (4.6,-0.7)  {$8$};
\draw (3.4,1) to [out=-90, in=-90] +(1.2,0);
\draw (2.8,-0.5) to [out=90, in=-90] +(+1.2,+1.5);
\draw (3.4,-0.5) to [out=90, in=90] +(1.2,0);
\draw[color=red] (2.8 ,1) to  [out=-90,in=90] +(0,-1.5);
\draw[color=red] (3.4 ,1) to  [out=-90,in=90] +(0,-1.5);
\draw[color=red] (4 ,1) to  [out=-90,in=90] +(0,-1.5);
\draw[color=red] (4.6 ,1) to  [out=-90,in=90] +(0,-1.5);
\end{scope}
\end{tikzpicture}
\end{equation}
\captionsetup{labelformat=empty}
  \caption{Closure diagram of $(267453)$-Brauer diagram}
\end{figure}
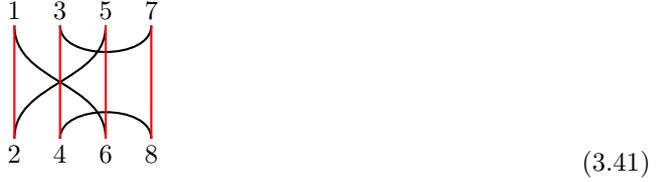
\item[(III):] $g\in \sigma H\sigma^{-1}\cap H$ if and only if the action of $g$ on the closure diagram $\overline{d_{\sigma}}$ is invariant.
\end{itemize}
\begin{remark}\label{symmetricclosure}
(1) The permutation $\sigma\in \mathfrak{S}_{2k}$ can be graphically represented by placing $2k$ dots on a horizontal line and $2k$ dots on a second horizontal line above the first one, and adding $2k$ lines. The $s$-th line connects the $s$-th dot in the lower line with the $\sigma(s)$-th dot in the higher line. For example, $\sigma=(267453)\in \mathfrak{S}_{8}$ can be represented as
$$
\begin{tikzpicture}[scale=1,thick,>=angle 90]
\begin{scope}[xshift=4cm]
\draw (2.8,-0.5) to  [out=90,in=-90] +(0,+1.5);
\draw (3.4,-0.5) to [out=90, in=-90] +(2.4,+1.5);
\draw (4,-0.5) to [out=90, in=-90] +(-0.6,+1.5);
\draw (4.6,-0.5) to [out=90, in=-90] +(0.6,+1.5);
\draw (5.2,-0.5) to [out=90, in=-90] +(-1.2,+1.5);
\draw (5.8,-0.5) to [out=90, in=-90] +(0.6,+1.5);
\draw (6.4,-0.5) to [out=90, in=-90] +(-1.8,+1.5);
\draw (7,-0.5) to [out=90, in=-90] +(0,+1.5);
\end{scope}
\end{tikzpicture}
$$
(2) The closure diagram $\overline{d_{\sigma}}$ with $\sigma\in \mathfrak{S}_{2k}$ can be also regarded as adding $k$ black arcs connecting the $(2s-1)$-th dot with the $(2s)$-th dot in the lower line and $k$ red arcs connecting the $(2s-1)$-th dot with the $(2s)$-th dot in the higher line. For example, the closure diagram of $\sigma=(267453)\in \mathfrak{S}_{8}$ can be represented as
$$
\begin{tikzpicture}[scale=1,thick,>=angle 90]
\begin{scope}[xshift=4cm]
\draw (2.8,-0.5) to  [out=-90,in=-90] +(0.6,0);
\draw (4,-0.5) to [out=-90, in=-90] +(0.6,0);
\draw (5.2,-0.5) to [out=-90, in=-90] +(0.6,0);
\draw (6.4,-0.5) to [out=-90, in=-90] +(0.6,0);
\draw (2.8,-0.5) to  [out=90,in=-90] +(0,+1.5);
\draw (3.4,-0.5) to [out=90, in=-90] +(2.4,+1.5);
\draw (4,-0.5) to [out=90, in=-90] +(-0.6,+1.5);
\draw (4.6,-0.5) to [out=90, in=-90] +(0.6,+1.5);
\draw (5.2,-0.5) to [out=90, in=-90] +(-1.2,+1.5);
\draw (5.8,-0.5) to [out=90, in=-90] +(0.6,+1.5);
\draw (6.4,-0.5) to [out=90, in=-90] +(-1.8,+1.5);
\draw (7,-0.5) to [out=90, in=-90] +(0,+1.5);
\draw[color=red] (2.8,1) to  [out=90,in=90] +(0.6,0);
\draw[color=red] (4,1) to [out=90, in=90] +(0.6,0);
\draw[color=red] (5.2,1) to [out=90, in=90] +(0.6,0);
\draw[color=red] (6.4,1) to [out=90, in=90] +(0.6,0);
\node (node001) at (2.8,1.3)  {$1$};
\node (node002) at (3.4,1.3)  {$2$};
\node (node003) at (4,1.3)  {$3$};
\node (node004) at (4.6,1.3)  {$4$};
\node (node005) at (5.2,1.3)  {$5$};
\node (node006) at (5.8,1.3)  {$6$};
\node (node007) at (6.4,1.3)  {$7$};
\node (node008) at (7,1.3)  {$8$};
\end{scope}
\end{tikzpicture}
$$
and it is same as Figure \ref{fig:closure}.
\end{remark}

We call two dots in the closure diagram \textit{connected} if they can be connected by  black lines and red lines, in addition, every line cannot be cut off. For example, in the closure diagram $\overline{d_{(267453)}}$, the dots $1,2,5,6$ are connected to each other, the dots $3,4,7,8$ are connected to each other, and the dots $1,3$ are not connected. One can observe that the dots of the component with number equal to $2s$ are $\lambda_s$ for all $s$, where $1^{\lambda_1}2^{\lambda_2}\cdots k^{\lambda_k}$ is the type of the Brauer diagram. 

Note that the dots in a component and lines connected to them form a \textit{circle} and the half of the dots in the component is the \textit{length} of the circle. Therefore, the closure diagram $\overline{d_{\sigma}}$ can be separated into $\lambda_i$ circles of length $i$. Recall (III) $g\in \sigma H\sigma^{-1}\cap H$ means that $g$ maps circles with length $i$ to circles with length $i$ for all $i$. Therefore, 
$$\sigma H\sigma^{-1}\cap H\cong \mathop{\prod}\limits_{i=1}^k (\underbrace{D_i\times D_i\times \cdots\times D_i}_{\lambda_i})\rtimes \mathfrak{S}_{\lambda_i},$$
where $D_i$ is the Dihedral group with order $2i$.

For example, the closure diagram $\overline{d_{(267453)}}$ can be separated into two circles as follows:
$$
\begin{tikzpicture}[scale=1,thick,>=angle 90]
\begin{scope}[xshift=4cm]
\draw (2.8,1) to  [out=-90,in=90] +(1.2,-1.5);
\node (node001) at (2.8,1.2)  {$1$};
\node (node002) at (2.8,-0.7)  {$2$};
\node (node003) at (5,1.2)  {$3$};
\node (node004) at (5,-0.7)  {$4$};
\node (node005) at (4,1.2)  {$5$};
\node (node006) at (4,-0.7)  {$6$};
\node (node007) at (6.2,1.2)  {$7$};
\node (node008) at (6.2,-0.7)  {$8$};
\draw (5,1) to [out=-90, in=-90] +(1.2,0);
\draw (2.8,-0.5) to [out=90, in=-90] +(+1.2,+1.5);
\draw (5,-0.5) to [out=90, in=90] +(1.2,0);
\draw[color=red] (2.8 ,1) to  [out=-90,in=90] +(0,-1.5);
\draw[color=red] (5 ,1) to  [out=-90,in=90] +(0,-1.5);
\draw[color=red] (4 ,1) to  [out=-90,in=90] +(0,-1.5);
\draw[color=red] (6.2 ,1) to  [out=-90,in=90] +(0,-1.5);
\end{scope}
\end{tikzpicture}
$$
The order of group $ \sigma H\sigma^{-1}\cap H$ is $\mathop{\prod}\limits_{i=1}^k(2i)^{\lambda_i}\lambda_i!$. Hence,
\begin{align*}
    |H\sigma H|=|H\sigma H\sigma^{-1}|=\frac{|H|\cdot|\sigma H\sigma^{-1}|}{| H\cap \sigma H\sigma^{-1}|}=\frac{(2^kk!)^2}{\mathop{\prod}\limits_{i=1}^k(2i)^{\lambda_i}\lambda_i!}.
\end{align*}

Recall that the action of $H$ on the  $(0,2k)$-Brauer diagram appeared in Remark \ref{sigmaBrauer} is invariant. Hence, by Remark \ref{symmetricclosure}, if $\sigma'\in H\sigma H$, then the type of the Brauer diagram $d_{\sigma'}$ is the same as the type of the Brauer diagram $d_{\sigma}$. This means the double cosets $H\sigma H,~H\sigma'H$ are non-intersecting if the types of $d_{\sigma}$ and $d_{\sigma'}$ are different. Multiplying by $2^kk!$ on both sides of \eqref{Brauerequation}, we can get
\begin{equation*}
   \mathop{\sum}\limits_{\substack{\lambda_i\geqslant 0\\ \lambda_1+2\lambda_2+\cdots+k\lambda_k=k}} \frac{(2^kk!)^2}{\mathop{\prod}\limits_{i=1}^n(2i)^{\lambda_i}\lambda_i!}=(2k)!
\end{equation*}
and hence we establish the double coset decomposition of $\mathfrak{S}_{2k}$
\begin{align}\label{doublecoset}
\mathfrak{S}_{2k}=\mathop{\bigsqcup}\limits_{\substack{\lambda_i\geqslant 0\\ \lambda_1+2\lambda_2+\cdots+k\lambda_k=k} } H\sigma_{\boldsymbol{\lambda}}H,  
\end{align}
where $\boldsymbol{\lambda}=(\lambda_1,\lambda_2,\cdots,\lambda_k)$ and $\sigma_{\boldsymbol{\lambda}}$ is a permutation whose Brauer diagram is of type $1^{\lambda_1}2^{\lambda_2}\cdots k^{\lambda_k}$.

Now, we are prepared to prove Key Lemma \ref{symmetricgroup}.
\begin{proof}
(1) Suppose that the lemma holds for $\sigma$, then  the lemma is valid for every element in $\sigma H$.

The group $H$ is isomorphic to the semiproduct of $K$ and $\mathfrak{S}_{k}$. For every $h\in H$, there is a unique $\tau_2\in K$ and $g_2\in \mathfrak{S}_k$ such that $h=\tau_2\overline{g_2}$, hence 
$$ \tau_1\overline{g_1}=\sigma \tau\overline{g} \sigma^{-1}= (\sigma h) h^{-1}\tau h \cdot (\tau_2\overline{g_2})^{-1}\overline{g} (\tau_2 \overline{g_2})(\sigma h)^{-1}.  $$
There is a unique $\tau_3\in K$ such that $(\tau_2\overline{g_2})^{-1}\overline{g} (\tau_2 \overline{g_2})=\tau_3 \overline{g_2^{-1}gg_2}$. Let $\tau'= h^{-1}\tau h\tau_3$ and $g'=g_2^{-1}gg_2$. Therefore, $(\sigma h) \tau' \overline{g'} (\sigma h)^{-1}=\tau_1\overline{g_1}$ and $\mathrm{sgn}(\tau_1)\mathrm{sgn}(g')\mathrm{sgn}(g_1)=\mathrm{sgn}(\tau_1)\mathrm{sgn}(g)\mathrm{sgn}(g_1)=-1$.

(2) Suppose that the lemma holds for $\sigma$, then the lemma is valid for $\sigma^{-1}$.

Obviously, $\mathrm{sgn}(\overline{g})=1$ for all $g\in\mathfrak{S}_k$, so $\mathrm{sgn}(\tau)=\mathrm{sgn}(\tau_1)$. Therefore, $\sigma^{-1} \tau_1\overline{g_1} \sigma= \tau\overline{g}$ and $\mathrm{sgn}(\tau)\mathrm{sgn}(g_1)\mathrm{sgn}(g)=-1$.

By (1) and (2), we can conclude that if the lemma holds for $\sigma$, then the lemma holds for every element in $H\sigma H$. Therefore, by the double coset decomposition \eqref{doublecoset} of $\mathfrak{S}_{2k}$, we only need to prove for every type $1^{\lambda_1}2^{\lambda_2}\cdots k^{\lambda_k}$, there is $\sigma\in\mathfrak{S}_{2k}$ whose Brauer diagram is of type $1^{\lambda_1}2^{\lambda_2}\cdots k^{\lambda_k}$ such that the lemma holds.

Suppose that the type of $\sigma$-Brauer diagram is $1^{\lambda_1}2^{\lambda_2}\cdots k^{\lambda_k}$. A particular Brauer diagram of type $1^{\lambda_1}2^{\lambda_2}\cdots k^{\lambda_k}$ consists of $\lambda_l$ the following subgraph
$$
\begin{tikzpicture}[scale=1,thick,>=angle 90]
\begin{scope}[xshift=4cm]
\draw (2.8,1) to  [out=-90,in=90] +(2.4,-1.5);
\node (node005) at (4.35,0.35)  {$\cdots$};
\draw (3.4,-0.5) to [out=90, in=-90] +(0.6,1.5);
\draw (2.8,-0.5) to [out=90, in=-90] +(0.6,1.5);
\draw (4.6,-0.5) to [out=90, in=-90] +(0.6,1.5);
\end{scope}
\end{tikzpicture}
$$
where the number of lines in the subgraph is $l$. Take $\sigma'\in \mathfrak{S}_{2l}$ such that $d_{\sigma'}$ is the subgraph as above.

Suppose $\lambda_l>0$ and the lemma holds for $\sigma'$. Then, one can show that the lemma holds for $\sigma$ by extending the permutations in  $\mathfrak{S}_l$ and $\mathfrak{S}_{2l}$ into permutations in $\mathfrak{S}_k$ and $\mathfrak{S}_{2k}$.

Therefore, we only need to consider the particular Brauer diagram $d_{\sigma}$ as follows:
$$
\begin{tikzpicture}[scale=1,thick,>=angle 90]
\begin{scope}[xshift=4cm]
\draw (2.8,1) to  [out=-90,in=90] +(2.4,-1.5);
\node (node001) at (2.8,1.2)  {$1$};
\node (node002) at (2.8,-0.7)  {$2$};
\node (node003) at (3.4,1.2)  {$3$};
\node (node004) at (3.4,-0.7)  {$4$};
\node (node005) at (4.35,0.35)  {$\cdots$};
\node (node007) at (5.2,1.2)  {$2l-1$};
\node (node008) at (5.2,-0.7)  {$2l$};
\draw (3.4,-0.5) to [out=90, in=-90] +(0.6,1.5);
\draw (2.8,-0.5) to [out=90, in=-90] +(0.6,1.5);
\draw (4.6,-0.5) to [out=90, in=-90] +(0.6,1.5);
\end{scope}
\end{tikzpicture}
$$
where $\sigma=(2l~ 2l-1~\cdots 3~2)$. This Brauer diagram can be represented as a cycle $(123\cdots l)$ and hence its type is $l$. Let $\tau=\tau_1=(12)(34)\cdots (2l-1~ 2l)$,

\begin{align*}
g=\begin{pmatrix}
    2 &3& \cdots&l-1& l\\
    l&l-1&\cdots &3&2
\end{pmatrix}\quad \text{and} \quad   
g_1=\begin{pmatrix}
    1&2& \cdots &l-1& l\\
    l&l-1&\cdots&2&1
\end{pmatrix},
\end{align*}
one can check that $\sigma \tau\overline{g} \sigma^{-1}=\tau_1\overline{g}_1$ and $\mathrm{sgn}(\tau_1)=(-1)^l, \mathrm{sgn}(g)=(-1)^{\frac{(l-1)(l-2)}{2}}$ and $\mathrm{sgn}(g_1)=(-1)^{\frac{l(l-1)}{2}}$ and hence the lemma holds for $\sigma$.
\end{proof}
\begin{remark}\label{tau1g1}
For $\sigma\in \mathfrak{S}_{2k}$ with $d_{\sigma}$ of type $k$, we can build all elements $\tau_2\overline{g_2}\in H$ satisfying Key Lemma \ref{symmetricgroup}. Here we prove this construction.

We take the closure diagram $\overline{d_{\sigma}}$, which consists of some circles. Let us say there is only a circle and its length is $k$. By choosing a direction,  we establish an oriented circle $i_1\rightarrow i_2\rightarrow\cdots\rightarrow i_{2k}\rightarrow i_1$. We refer the points $a$ and $b$ in the following subgraph as the entry point and the exit point, respectively.
$$
\begin{tikzpicture}[scale=1,thick,>=angle 90]
\begin{scope}[xshift=4cm]
\node (node001) at (3.3,1)[color=red]  {$>$};
\node (node002) at (4.3,1)  {$>$};
\node (node003) at (3.8,1.3)  {$a$};
\draw (2.8,1)[color=red] to  [out=0,in=180] +(1,0);
\draw (3.8,1) to  [out=0,in=180] +(1,0);
\node (node003) at (3.8,1)  {$\bullet$};
\node (node001) at (6.3,1)  {$>$};
\node (node002) at (7.3,1)[color=red]  {$>$};
\node (node003) at (6.8,1.3)  {$b$};
\draw (5.8,1) to  [out=0,in=180] +(1,0);
\draw (6.8,1)[color=red] to  [out=0,in=180] +(1,0);
\node (node003) at (6.8,1)  {$\bullet$};
\end{scope}
\end{tikzpicture}
$$
Therefore, $\tau_1\overline{g_1}$ can be chosen by the permutation
$$\begin{pmatrix}
    i_1& i_2&\cdots &i_k\\
    i_s& i_{s-1}&\cdots &i_{s+1}
\end{pmatrix}$$
where the one of $i_1, i_s$ is the entry point and the other is the exit point.
For example, let $\sigma=(23)(45)\in \mathfrak{S}_6$, an orientation of $\overline{d_{\sigma}}$ is
$$
\begin{tikzpicture}[scale=1,thick,>=angle 90]
\begin{scope}[xshift=4cm]
\draw (2.8,1) to  [out=-90,in=-90] +(0.6,0);
\draw (3.4,0) to [out=90,in=90] +(0.6,0);
\draw (2.8,0) to [out=90,in=-90] +(1.2,1);
\draw (2.8,1)[color=red] to  [out=-90,in=90] +(0,-1);
\draw (3.4,1)[color=red] to  [out=-90,in=90] +(0,-1);
\draw (4,1)[color=red] to  [out=-90,in=90] +(0,-1);
\node (node001) at (2.8,1.2)  {$1$};
\node (node002) at (2.8,-0.2)  {$2$};
\node (node003) at (3.4,1.2)  {$3$};
\node (node004) at (3.4,-0.2)  {$4$};
\node (node007) at (4,1.2)  {$5$};
\node (node008) at (4,-0.2)  {$6$};
\node (node008) at (3.1,0.82)  {$>$};
\node (node008) at (3.7,0.18)  {$>$};
\node (node008) at (2.8,0.5)[color=red]  {$\wedge$};
\node (node008) at (3.4,0.7)[color=red]  {$\vee$};
\node (node008) at (4,0.5)[color=red]  {$\wedge$};
\node (node008)[rotate=25] at (3.1,0.4)  {$<$};
\end{scope}
\end{tikzpicture}
$$
Then, the oriented circle is $1\rightarrow 3\rightarrow 4\rightarrow 6\rightarrow 5\rightarrow2\rightarrow 1$, and $1,4,5$ are entry points, while $2,3,6$ are exit points.
$\tau_1\overline{g_1}$ is equal to one of three permutations
$$\begin{pmatrix}
    1&3&4&6&5&2\\
    2&5&6&4&3&1
\end{pmatrix},\quad\begin{pmatrix}
    1&3&4&6&5&2\\
    3&1&2&5&6&4
\end{pmatrix},\quad \begin{pmatrix}
    1&3&4&6&5&2\\
    6&4&3&1&2&5
\end{pmatrix},$$
one can calculate the $\tau\overline{g}$ and check that $\mathrm{sgn}(\tau_1)\mathrm{sgn}(g)\mathrm{sgn}(g_1)=-1$.

This process can be generalized to any $\sigma\in \mathfrak{S}_{2k}$. 
\end{remark}
\begin{remark}
 For $\sigma\in \mathfrak{S}_{2k}$, denote by $A^{\sigma}_1$ the set 
 $$\left\{~\tau_1\overline{g_1}~\left| ~\exists~ \tau,\tau_1\in K, g,g_1\in\mathfrak{S}_k,~~\mathrm{such ~that~ (i)~ and ~(ii)~ hold ~in~ Lemma ~\ref{symmetricgroup}.} \right.\right\}$$
 and $A^{\sigma}_{0}$ the complement of $A^{\sigma}_1$ in $H\cap \sigma H\sigma^{-1}$. One can show that $A_1^{\sigma}$ is a subgroup of $H\cap \sigma H\sigma^{-1}$ and
 $$A_{0}^{\sigma}A_1^{\sigma}=A_{1}^{\sigma}A_0^{\sigma}= A_1^{\sigma},\quad A_{1}^{\sigma}A_1^{\sigma}= A_0^{\sigma}.$$
Therefore, the cardinality of $A_{1}^{\sigma}$ and $A_0^{\sigma}$ are equal to half of the order of $H\cap \sigma H\sigma^{-1}$.

Using the notation in the above remark, the elements of $A_0^{\sigma}$ can be listed as follows:
$$\begin{pmatrix}
    i_1& i_2&\cdots &i_k\\
    i_s& i_{s+1}&\cdots &i_{s-1}
\end{pmatrix},$$
where the $i_1, i_s$ are either entry points or exit points. For example, if $\sigma=(23)(45)$, then
\begin{align*}
    A_0^{\sigma}=\left\{ \begin{pmatrix}
    1&3&4&6&5&2\\
    1&3&4&6&5&2
\end{pmatrix},~~\begin{pmatrix}
    1&3&4&6&5&2\\
    4&6&5&2&1&3
\end{pmatrix},~~\begin{pmatrix}
    1&3&4&6&5&2\\
    5&2&1&3&4&6
\end{pmatrix}  \right\}.
\end{align*}
\end{remark} 
\begin{theorem}\label{eta'surjective}
    The map $\eta': T(\mathfrak{g})^{\mathfrak{g}}\longrightarrow \mathcal{Z}(\mathfrak{g})$ is surjective for a Lie superalgebra listed in \eqref{eq:glist} with the exception of $\mathfrak{osp}_{2m|2n}$.
\end{theorem}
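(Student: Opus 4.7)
My plan is to read off the surjectivity of $\eta'$ directly from the large diagram displayed just after Proposition \ref{sigmacycle}, combined with the commutation identities of Theorem \ref{etaeta'} and its $\mathfrak{osp}$-analog Proposition \ref{osprelation}. The idea is to first show that the composite $\psi \circ \eta \circ \pi$ maps $\bigoplus_{k\geqslant 0} [\mathrm{End}(V)^{\otimes k}]^{\mathfrak{g}}$ surjectively onto $\mathcal{Z}(\mathfrak{g})$, and then to rewrite each element in this image as a linear combination of elements already lying in the image of $\eta'$.

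For the first step, I trace through the four-stage factorization: by Theorem \ref{thm:sw}, the map $\Psi$ is surjective in the families under consideration except $\mathfrak{osp}_{2m|2n}$; the map $\Omega$ is an isomorphism of $\mathrm{U}(\mathfrak{g})$-module superalgebras as noted after its definition in \eqref{Omega}; Proposition \ref{le:split} provides the split retraction $\tilde{\pi}$, so taking $\mathfrak{g}$-invariants of $\tilde{\pi}^{\otimes k}$ yields a surjection $\pi$ onto $T(\mathfrak{g})^{\mathfrak{g}}$; the split short exact sequence \eqref{split} restricts to a surjection $\eta\colon T(\mathfrak{g})^{\mathfrak{g}} \twoheadrightarrow S(\mathfrak{g})^{\mathfrak{g}}$; and the supersymmetrization $\psi$ is a $\mathfrak{g}$-module isomorphism, hence restricts to an isomorphism $S(\mathfrak{g})^{\mathfrak{g}} \xrightarrow{\sim} \mathcal{Z}(\mathfrak{g})$. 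Composing, every central element $z$ can be written as a linear combination $z = \sum_i c_i\, \psi \circ \eta \circ \pi(\theta_{\sigma_i})$ for suitable permutations $\sigma_i$ (coming from $\mathfrak{S}_{k_i}$ in the $\mathfrak{gl}$ and $\mathfrak{q}$ cases and from $\mathfrak{S}_{2k_i}$ in the $\mathfrak{osp}$ case).

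For the second step, I apply Theorem \ref{etaeta'} when $\mathfrak{g} \in \{\mathfrak{gl}_{m|n}, \mathfrak{q}_n\}$ and Proposition \ref{osprelation} when $\mathfrak{g} = \mathfrak{osp}_{2m+1|2n}$, which express each $\psi \circ \eta \circ \pi(\theta_{\sigma_i})$ as an average of elements $\mathfrak{z}_{\tau^{-1}\sigma_i\tau}$ (respectively $\mathfrak{z}_{\overline{\tau}\sigma_i}$). Since by definition $\mathfrak{z}_\sigma = \eta' \circ \pi(\theta_\sigma)$ lies in the image of $\eta'$ for every $\sigma$, each averaged sum does too, and hence so does $z$. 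The case $\mathfrak{g} = \mathfrak{p}_n$ is immediate from Theorem \ref{thm:pncenter}, which forces $\mathcal{Z}(\mathfrak{p}_n) = \mathbb{C}$, making $\eta'$ trivially surjective.

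I do not expect a serious technical obstacle: essentially all the hard work has been absorbed into building the diagram together with Theorem \ref{thm:sw}, Proposition \ref{le:split}, and the two commutation identities Theorem \ref{etaeta'} and Proposition \ref{osprelation}. The main care is to check that each map in the chain really restricts to a surjection on $\mathfrak{g}$-invariants, which follows from $\mathfrak{g}$-equivariance together with the splittings already available at each stage. The only structural reason the argument cannot be pushed through for $\mathfrak{osp}_{2m|2n}$ is the failure of Brauer Schur--Weyl surjectivity in that family, which is precisely the hypothesis excluded in Theorem \ref{thm:sw} and therefore naturally absent from the statement.
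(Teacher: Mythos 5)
Your proposal is correct and follows essentially the same route as the paper: the paper's proof of Theorem \ref{eta'surjective} simply points to the four subsections, which in turn carry out precisely the chain of surjections/isomorphisms you describe and then invoke Theorem \ref{etaeta'}, Proposition \ref{osprelation}, and Theorem \ref{thm:pncenter} case by case. You have merely assembled those pieces into one self-contained argument.
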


\begin{proof}
    This can be deduce from subsections \ref{subsectiongl}, \ref{subsectionq}, \ref{subsectionosp}, and \ref{subsectionp}.
\end{proof}

\begin{example}\label{ex:k3example}
Let $k=3$, by \eqref{standardinvariant}
\begin{align*}
c^{\otimes 3}=\mathop{\sum}\limits_{\mathbf{I}}(-1)^{|i_1|+|i_2|+|i_3|}e_{i_1}\otimes e_{i_1'}\otimes e_{i_2}\otimes e_{i_2'}\otimes e_{i_3}\otimes e_{i_3'}
=\mathop{\sum}\limits_{\mathbf{J}\in W}(-1)^{|\mathbf{J}^o|}e_{j_1}\otimes e_{j_2}\otimes \cdots \otimes e_{j_6},
\end{align*}
where $\mathbf{J}=(j_1,j_2,\cdots,j_6)=(i_1,i_1',i_2,i_2',i_3,i_3').$
Take $\sigma=(23)(45)\in \mathfrak{S}_6$, then
\begin{align*}
\gamma(\mathbf{J},\sigma)=&(-1)^{|j_2||j_3|+|j_4||j_5|}=(-1)^{|i_1'||i_2|+|i_2'||i_3|},\\
|\mathbf{J}_{\sigma}^o|=&(-1)^{|j_{\sigma(1)}|+|j_{\sigma(3)}|+|j_{\sigma(5)}|}=(-1)^{|i_1|+|i_1'|+|i_2'|},\\
p(\mathbf{1},\mathbf{J}_{\sigma}^o+\mathbf{J}_{\sigma}^e)=&(-1)^{2(|j_{\sigma(1)}|+|j_{\sigma(2)}|)+|j_{\sigma(3)}|+|j_{\sigma(4)}|}=(-1)^{|i_1'|+|i_3|}.
\end{align*}
Therefore, 
\begin{align}\label{example(23)(45)}
\eta\circ\pi(\theta_{\sigma^{-1}})=&\mathop{\sum}\limits_{\mathbf{J}}(-1)^{|\mathbf{J}^o|+|\mathbf{J}_{\sigma}^o|}p(\mathbf{1},\mathbf{J}_{\sigma}^o+\mathbf{J}_{\sigma}^e)\gamma(\mathbf{J},\sigma) G_{j_{\sigma(1)}j_{\sigma(2)}'}G_{j_{\sigma(3)}j_{\sigma(4)}'}G_{j_{\sigma(5)}j_{\sigma(6)}'}\nonumber\\
=&\mathop{\sum}\limits_{\mathbf{I}}(-1)^{|i_2'|+|i_3|+|i_1||i_2|+|i_2||i_3|} G_{i_1i_2'}G_{i_1'i_3'}G_{i_2'i_3}.
\end{align}
By Remark \ref{tau1g1}, let $\tau_1\overline{g_1}=\begin{pmatrix}
    1&3&4&6&5&2\\
    2&5&6&4&3&1
\end{pmatrix}$, so $\tau_1=(12)$ and $g_1=(23)$. There exist unique $\tau=(56)\in K, g=(12)\in\mathfrak{S}_3$ such that conditions (i) and (ii) hold in Key Lemma \ref{symmetricgroup}. The $\tau=(56)$ indicates that we firstly apply \eqref{relationp} in the third term $G_{i_2'i_3}$.
\eqref{example(23)(45)} can be rewritten as 
\begin{align}\label{examplecal1}
\mathop{\sum}\limits_{\mathbf{I}} -(-1)^{|i_3|(|i_2'|+|i_3|)}\cdot (-1)^{|i_2'|+|i_3|+|i_1||i_2|+|i_2||i_3|} G_{i_1i_2'}G_{i_1'i_3'}G_{i_3i_2'}.
\end{align}

The $g=(12)$ indicates that we secondly apply \eqref{relationSg} to change the first term $G_{i_1i_2'}$ and the second term $G_{i_1'i_3'}$ in \eqref{examplecal1}, then $\eta\circ\pi(\theta_{\sigma^{-1}})$ can be rewritten as 
\begin{align}\label{examplecal}
&\mathop{\sum}\limits_{\mathbf{I}} -(-1)^{|i_3|(|i_2'|+|i_3|)}\cdot(-1)^{(|i_1|+|i_2'|)(|i_1'|+|i_3'|)}\cdot (-1)^{|i_2'|+|i_3|+|i_1||i_2|+|i_2||i_3|} G_{i_1'i_3'}G_{i_1i_2'}G_{i_2'i_3}\nonumber\\
=&\mathop{\sum}\limits_{\mathbf{I}}(-1)^{|i_2|+|i_1||i_3|+|i_2||i_3|}G_{i_1'i_3'}G_{i_1i_2'}G_{i_2'i_3},
\end{align}
The permutation $\tau_1\overline{g_1}=\begin{pmatrix}
    1&2&3&4&5&6\\
    2&1&5&6&3&4
\end{pmatrix}$ indicates that we replace $(j_1,j_2\cdots,j_6)=(i_1,i_1',i_2,i_2',i_3,i_3')$ in \eqref{example(23)(45)} by $(j_{\tau_1\overline{g_1}(1)},j_{\tau_1\overline{g_1}(2)},\cdots,j_{\tau_1\overline{g_1}(6)})=(i_1',i_1,i_3,i_3',i_2,i_2')$, then we get
\begin{align*}
\eta\circ\pi(\theta_{\sigma^{-1}})=&\mathop{\sum}\limits_{\mathbf{I}}(-1)^{|i_3'|+|i_2|+|i_1'||i_3|+|i_3||i_2|}G_{i_1'i_3'}G_{i_1i_2'}G_{i_3'i_2}\\
=&-\mathop{\sum}\limits_{\mathbf{I}}(-1)^{|i_2|+|i_1||i_3|+|i_2||i_3|}G_{i_1'i_3'}G_{i_1i_2'}G_{i_3'i_2}.
\end{align*}
This formula is the negative of \eqref{examplecal}, thus $\eta\circ \pi(\theta_{\sigma}^{-1})=0$.
\end{example}
\begin{remark}
Let us briefly comment on the $\mathfrak{p}_n$-invariants in $T(\mathfrak{p}_n)$. According to Theorem \ref{thm:pncenter}, we know that the $\mathfrak{p}_n$-invariants in $S(\mathfrak{p}_n)$ and $\mathrm{U}(\mathfrak{p}_n)$ are trivial. However, this does not imply the absence of nontrivial $\mathfrak{p}_n$-invariants in $T(\mathfrak{p}_n)$. In fact, there exist numerous nontrivial $\mathfrak{p}_n$-invariants in $T(\mathfrak{p}_n)$, for example,
\begin{align*}
\pi(\theta_{\sigma^{-1}})=\mathop{\sum}\limits_{\mathbf{I}}(-1)^{|i_2'|+|i_3|+|i_1||i_2|+|i_2||i_3|} G_{i_1i_2'}\otimes  G_{i_1'i_3'}\otimes G_{i_2'i_3}
\end{align*}
is nonzero, where $\sigma=(23)(45)\in \mathfrak{S}_6$ as shown in Example \ref{ex:k3example}. Therefore, an interesting problem is to estimate the upper bound of the dimension of $\left(\mathfrak{g}^{\otimes k}\right)^{\mathfrak{g}}$ for classical Lie superalgebras $\mathfrak{g}$ listed in \eqref{eq:glist}. 
\end{remark}

The authors have no conflict of interest to declare that are relevant to this article.

\bibliographystyle{alpha}

\end{document}